\newtheorem{theo}{Theorem}
\newtheorem{lem}{Lemma}[section]
\newtheorem{defi}[lem]{Definition}
\newtheorem{prop}[lem]{Proposition}
\newtheorem{conj}[theo]{Conjecture}
\newtheorem*{main_result}{Main result}
\newcommand{\eps}{\varepsilon}
\newcommand{\qtext}[1]{\quad\mbox{#1}\quad}
\newcommand{\qqtext}[1]{\qquad\mbox{#1}\qquad}
\newcommand{\rd}{\mathrm d}
\newcommand{\R}{\mathbb{R}}
\newcommand{\N}{\mathbb{N}}
\newcommand{\narrowcv}{\overset{\ast}{\rightharpoonup}}
\newcommand{\grad}{\operatorname{grad}}
\newcommand{\M}{\mathcal{M}}
\newcommand{\W}{{\mathcal{W}}}
\newcommand{\E}{{\mathcal{E}}}
\newcommand{\I}{{\mathcal{I}}}
\newcommand{\F}{{\mathcal{F}}}
\newcommand{\D}{{\mathcal{D}}}
\newcommand{\X}{{\mathcal{X}}}
\newcommand{\Dom}{\operatorname{Dom}}
\newcommand{\tr}{\operatorname{tr}}
\renewcommand{\H}{{\mathcal{H}}}
\newcommand{\argmin}{\operatorname{Argmin}}
\newcommand{\dist}{\mathsf d}
\newcommand{\Oed}{{\Omega_{\varepsilon,\delta}}}
\newcommand{\Gd}{{\Gamma_\delta}}
\newcommand{\ologM}{{\overline H'_M}}
\newcommand{\ulogM}{{\underline H'_M}}
\newcommand{\mrest}{
  \,\raisebox{-.127ex}{\reflectbox{\rotatebox[origin=br]{-90}{$\lnot$}}}\,%
}
\newcommand{\pf}{{}_{\#}}
\renewcommand{\P}{\mathcal{P}}
\renewcommand{\o}{\omega}
\newcommand{\g}{\gamma}
\renewcommand{\O}{\Omega}
\newcommand{\G}{\Gamma}
\newcommand{\pO}{{\partial\Omega}}
\newcommand{\LO}{{\mathcal L_{\Omega}^d}}
\newcommand{\LG}{{\mathcal L_{\Gamma}^{d-1}}}
\newcommand{\bO}{{\overline\Omega}}
\renewcommand{\L}{\mathcal L}
\newcommand{\J}{\mathcal J}
\newcommand{\orho}{{\overline\rho}}
\newcommand{\oo}{{\overline\omega}}
\newcommand{\og}{{\overline\gamma}}
\newcommand{\trho}{{\tilde\rho}}
\def\dive{\operatorname{div}}
\numberwithin{equation}{section}
\begin{document}
\title{Sticky-reflecting diffusion as a Wasserstein gradient flow}
\date{}
\author{Jean-Baptiste Casteras, L\'eonard Monsaingeon\footnote{Corresponding author}, Filippo Santambrogio}

\maketitle
\abstract{
In this paper we identify the Fokker-Planck equation for (reflected) Sticky Brownian Motion as a Wasserstein gradient flow in the space of probability measures.
The driving functional is the relative entropy with respect to a non-standard reference measure, the sum of an absolutely continuous interior part plus a singular part supported on the boundary.
Taking the small time-step limit in a minimizing movement (JKO scheme) we prove existence of weak solutions for the coupled system of PDEs satisfying in addition an Energy Dissipation Inequality.
\\
\begin{center}
{\bf R\'esum\'e}
\end{center}

Dans cet article l'\'equation de Fokker-Planck pour le Sticky mouvement Brownien (r\'efl\'echi) est identifi\'ee comme un flot-gradient Wasserstein dans l'espace des mesures de probabilit\'e.
La fonctionnelle pilotant la dynamique est l'entropie relative \`a une mesure de r\'ef\'erence non standard, somme d'une partie int\'erieure absolument continue et d'une partie singuli\`ere support\'ee au bord.
En prenant la limite de petit pas de temps dans un mouvement minimisant (sch\'ema JKO) on prouve l'existence de solutions faibles pour le syst\`eme coupl\'e d'EDPs satisfaisant de plus une In\'egalit\'e de Dissipation de l'\'Energie.
}
\\

\bigskip
\noindent
{\it Keywords and phrases:} optimal transport; Wasserstein gradient flow; sticky Brownian motion; Fokker-Planck equation\\
{\it Mathematics Subject Classification:} 35A01, 35A15, 35K45, 49Q22
\section{Introduction}
Since the work of R. Jordan, D. Kinderlehrer, and F. Otto \cite{JKO98} it is now well understood that the classical Fokker-Planck equation
$ \partial_t\rho =\Delta\rho+\dive(\rho\nabla V)$ in bounded domains $\O\subset \R^d$ can be reinterpreted as the Wasserstein gradient flow $\frac{d\rho}{dt}=-\grad_\W\H(\rho|\mu)$ of the relative entropy in the space of probability measures, where $\mu=\frac 1{\mathcal Z}e^{-V}\in \P(\O)$ is the stationary Gibbs measure associated with the background potential $V:\Omega\to \R$.
Here $\W$ is the quadratic Wasserstein distance over the smooth domain $\O\subset \R^d$, and $\H(\rho|\mu)=\int_\O \frac{\rho}{\mu}\log\left(\frac\rho\mu\right)\rd\mu$ is the relative entropy (Kullback-Leibler divergence) of $\rho$ w.r.t. the reference measure $\mu$.
This has shed a whole new light on variational evolution of probability measures as gradient flows and the theory now covers advection-diffusion-aggregation equations \cite{carrillo2003kinetic,blanchet2008convergence}, Porous Medium Equation \cite{otto2001geometry} and doubly nonlinear parabolic equations \cite{agueh}, fourth-order quantum drift-diffusion \cite{matthes2009family}, reaction-diffusion equations with mass variations \cite{kondratyev2016new,GLM19,di2020tumor}, and many more.
We refer to the by-now classical textbooks \cite{villani_BIG,villani2003topics,OTAM} as well as to the survey \cite{santambrogio2017euclidean} for further discussions, references, and applications of this steadily growing topics.

A common feature of all the aforementioned models and equations is that they deal with measures $\rho=\rho(x)\cdot \rd x$ which are absolutely continuous with respect to either the Lebesgue measure in Euclidean domains, or its equivalent - the volume form - in Riemannian manifolds.
For the standard Fokker-Planck equation this is clearly legitimate due to strong regularizing properties of the Laplacian, the generator of the standard (reflected) Brownian Motion with stationary measure precisely given by the Lebesgue measure.
Recently however, the so-called (reflected) \emph{Sticky Brownian Motion} \cite{feller1952parabolic} (SBM in short) started attracting renewed interest \cite{peskir2015boundary,engelbert2014stochastic,bou2020sticky,konarovskyi2021spectral}.
Roughly speaking, SBM is a $\R^d$-valued stochastic process which behaves as a standard diffusion as long as it remains in the interior of a prescribed domain $\Omega\subset \R^d$.
When it hits the boundary $\G=\pO$ it sticks there for a random positive amount of time, while following an intrinsic tangential diffusion (generated by the Laplace-Beltrami operator $\Delta_\G$ thereon).
Eventually SBM almost surely reenters the domain and resumes its standard Brownian behaviour, until hitting the boundary again, and so on.
We refer to \cite{grothaus2017stochastic,fattler2016construction} for the construction and regularity properties of SBM via Dirichlet-forms.
In functional analytic terms, SBM is best described by its generator
$$
\L f(x)=\lim\limits_{t\downarrow 0}\frac{\mathbb E f(X_t)-f(x)}{t}
=
\begin{cases}
\frac 12\Delta f(x) & \mbox{if }x\in \O\\
\frac{a}{2}\Delta_\G f(x)-b\partial_\nu f(x) & \mbox{if }x\in\G=\pO
\end{cases}
$$
were the parameters $a,b>0$ are the tangential diffusivity and stickiness of the SBM process, respectively.
The stationary measure accordingly comprises an interior $\LO$ and boundary $\LG$ Lebesgue measures, and the laws of motion couple the interior domain and its boundary through an exchange term.
The relevant probability distributions thus cannot simply be absolutely continuous, and we write throughout
$$
\rho= \o+\g\in \P(\bO)
\qqtext{with}
\begin{cases}
\o=\rho\mrest\O\in \M^+(\O)\\
\mbox{and}\\
\g=\rho\mrest\G\in\M^+(\G)
\end{cases}
$$
for the associated interior/boundary decomposition $\bO=\O\cup\G$.
Discarding the probabilistic $\frac 12$ factors for convenience, the abstract Fokker-Planck equation $\partial_t\rho=\L^*\rho$ for pure SBM reads here
\begin{equation}
\label{eq:sticky_fokker_planck_ab}
\begin{cases}
  \partial_t\omega=\Delta\omega & \mbox{in }\Omega\\
    \omega=b\gamma & \mbox{on }\partial\Omega\\
  \partial_t\gamma=a\Delta_\G\gamma-\partial_\nu\omega & \mbox{in }\G
\end{cases}.
\end{equation}
In terms of PDEs this is a coupled system of bulk/interface diffusions, and the $\partial_\nu\o$ exchange term corresponds at the stochastic level to the jump rate between the two interior/boundary behaviours.
Here we focus on the case $a=b=1$, and in this paper we shall make a case that this is again a gradient flow
\begin{equation}
\label{eq:sticky_fokker_planck_GF}
\begin{cases}
 \partial_t\omega=\Delta\omega & \mbox{in }\Omega\\
 \omega=\gamma & \mbox{on }\partial\Omega\\
 \partial_t\gamma=\Delta_\G\gamma-\partial_\nu\omega  & \mbox{in }\G
\end{cases}
\quad\iff\quad
\frac{d\rho}{dt}=-\grad_{\W}\H(\rho\,\vert\,\mu)
\qtext{for}
\mu\coloneqq \LO+\LG.
\end{equation}
Perhaps surprisingly, the transportation distance involved here is still the classical, quadratic Wasserstein distance $\W$ over $\bO$.
This is due to our crucial assumption that $a=1$, for which the interior $\Delta$ and boundary $\Delta_\G$ diffusions are tangentially matched.
For $a\neq 1$ the transportation distance must be adapted \cite{CMN25} and the gradient-flow structure will be investigated in a future work \cite{CMN24}.
The case of coefficients $b\neq 1$ can however be treated by simply adapting the reference measure $\mu_b=b\LO+\LG$ in the driving functional $\rho\mapsto\H(\rho\,\vert\,\mu_b)$, and we chose to focus on $b=1$ only to clarify the exposition.
Similarly, background potentials $V_\O,V_\G$ could very well be included in the total free energy $\F=\H+\int _\O V_\O\rd\o+\int_\G V_\G\rd \g$ in order to account for drift in the Fokker-Planck equation, but we simply ignore this possibility in order not to overburden the analysis.
Note that a rather complete mathematical analysis (existence, uniqueness, regularity) of \eqref{eq:sticky_fokker_planck_ab} has been carried out in \cite{vazquez2011heat} within a $H^1(\Omega)\times H^1(\G)$ functional framework.
For the smooth, positive solutions constructed therein, integration by parts gives
\begin{multline}
\label{eq:formal_dissipation}
\frac{d}{dt}\H(\rho_t\,\vert\,\mu)
=
\frac{d}{dt}\left(\int_\O \o_t\log\o_t +\int_\G\g_t\log\g_t\right)
\\
=
\int_\O(1+\log\o_t)\Delta\o_t +\int_\G(1+\log\g_t)\left[\Delta_\G\g_t -\partial_\nu\o_t\right]
\\
=
\left[-\int_\O|\nabla\log\o_t|^2\o_t + \int_{\pO}(1+\log\o_t)\partial_\nu\o_t\right]
\\
\qquad+
\left[-\int_\G|\nabla_\G\log\g_t|^2\g_t
-
\int_\G (1+\log\g_t)\partial_\nu\o_t\right]
\\
=
-\int_\O |\nabla\log\o_t|^2\o_t - \int_\G|\nabla_\G\log\g_t|^2\g_t
\eqqcolon -\I(\o_t)-\I(\g_t).
\end{multline}
In the last equality we used the Dirichlet boundary condition $\left.\o_t\right|_{\pO}=\g_t$ to cancel out the boundary terms (more on this in a moment).
This shows that the relative entropy $\H(\rho\,\vert\,\mu)=\int_\O\o\log\o+\int_\G\g\log\g$ of $\rho=\o+\g$ is dissipated by the sum of the full Fisher information $\I(\o)$ inside $\O$ and the tangential Fisher information $\I(\g)$ along the boundary $\G=\pO$.
Here our focus is not really on the well-posedness (although our analysis will provide as a byproduct existence of weak solutions for initial data with merely finite entropy), and we rather take interest in the variational gradient flow structure underlying this dissipation relation.

Although one can certainly come up with models naturally taking into account bulk/interface interactions directly into the transportation distance \cite{monsaingeon2021new,glitzky2013gradient}, we find surprising that the coupled system \eqref{eq:sticky_fokker_planck_GF} still fits within a completely standard optimal transport framework, as far as the metric is concerned.
Another striking aspect of our analysis is the following:
For standard Fokker-Planck equations the usual no-flux boundary condition encodes local conservation of mass, while in \eqref{eq:sticky_fokker_planck_GF} one may wonder where the boundary condition $\o|_\pO=\g$ stems from (this trace compatibility was crucial in order to get to \eqref{eq:formal_dissipation}).
This is actually unrelated to mass conservation:
Our two evolution equations for $\o,\g$ put together are already mass conservative regardless of the extra boundary condition $\o|_\pO=\g$, since $\partial_\nu\o$ in the second PDE is precisely the outflux of the momentum $\nabla\o$ appearing in the first one so whatever ``comes out of $\O$'' is just transferred to the boundary (the two PDEs together can indeed be reinterpreted as a single continuity equation $\partial_t\rho+\dive m=0$ with no-flux condition for the total density $\rho=\o+\g$, but with a singular part $\g$ living on the boundary in addition to a usual absolutely continuous part $\o$ in the interior).
The Dirichlet boundary condition $\o|_{\pO}=\g$ is also not merely encoded as a constraint in the entropy functional itself, since finiteness of $\H(\rho\,\vert\,\mu)$ certainly does not require or guarantee the trace compatibility.
For the sake of completeness let us mention that in \cite{figalli2010new} a transportation distance was constructed on $\M^+(\O)$, using the boundary $\G=\pO$ as an infinite reservoir allowing to store and release arbitrary amounts of mass.
It was then showed that the gradient flow of the standard entropy (with $\rd x$ as a reference measure) with respect to this new distance corresponds to the heat equation with Dirichlet boundary condition $\rho|_{\pO}=1$.
Although similar in spirit this is actually unrelated to our approach here, since our model is really mass conservative in all regards.
As we shall see later on, our boundary condition rather arises from the energy-dissipation mechanism, and more precisely from the \emph{metric slope} of the driving functional (see Theorem~\ref{theo:slope_controls_Fisher}).
To the best of our knowledge this is the first example of an optimal transport model in which entropy dissipation (rather than entropy itself) gives rise to Dirichlet boundary conditions.
In order to illustrate this even further, one could consider different energies, for example of the form
$$
\F(\rho)=\int_\O F_\O(\o)\rd x + \int_\G F_\G(\g)\rd x,\hspace{1cm}\rho=\o+\g
$$
for some functions $F_\O,F_\G:\R^+\to \R$ satisfying suitable structural conditions.
The gradient flow would then read
$$
\frac{d\rho}{dt}=-\grad_{\W}\F(\rho)
\qquad\Leftrightarrow\qquad
\begin{cases}
 \partial_t\o=\dive\left(\o\nabla F_\O'(\o)\right) & \mbox{in }\Omega\\
 \left.F'_\O(\o)\right|_{\pO}=F'_\G(\g) & \mbox{on }\partial\Omega\\
 \partial_t\g=\dive_\G\left(\g\nabla_\G F'_\G(\g)\right)-\o\partial_\nu F_\O'(\o) & \mbox{in }\G.
\end{cases}
$$
The scalar fields $F_\O'(\o),F'_\G(\g)$ are known in classical optimal transport as \emph{pressure variables} (in this respect the two evolution equations are nothing but Darcy's law).
Very heuristically, a pressure difference along $\G$ would create somehow an infinite force.
From a variational standpoint this should  be prohibited for dissipative systems.
Thus the Dirichlet boundary condition can be reinterpreted as a dissipative, pressure matching condition.
\\

On a slightly different note, the theory of gradient flows in abstract metric spaces and \emph{curves of maximal slope} was initiated by De Giorgi \cite{de1980problems}, and more recently developed in \cite{AGS}.
One possible way of formalizing the notion of such abstract gradient flows is to prove convergence of De Giorgi's \emph{minimizing movement} in the small time-step limit $\tau\to 0$, of which the original JKO scheme \cite{JKO98} is a particular instanciation in the Wasserstein space $(\P(\O),\W)$.
In order to back-up our claim \eqref{eq:sticky_fokker_planck_GF} that sticky diffusion is indeed a gradient flow we will pursue this by now classical approach, with however a few twists.
First, there are two classical ways of proving that the limiting curve is a ``solution''.
The first one consists in exploiting purely metric tools to retrieve in the limit an \emph{Energy Dissipation Inequality} (EDI in short), where a dissipation functional $\D(\rho_t)=-\frac d{dt}\E(\rho_t)$ plays a key role together with the \emph{metric speed} $|\dot\rho_t|^2$ (both computed with respect to the Wasserstein distance).
Usually the dissipation is related to the \emph{metric slope} $\D(\rho)=|\partial\E|^2(\rho)$, typically a Fisher information functional $\I(\rho)$, and this guarantees that the limit is a curve of maximal slope.
Moreover an upper chain rule argument shows that the energy dissipation forces equality in a Cauchy-Schwarz inequality, thus relating the driving momentum $m$ in the continuity equation $\partial_t\rho+\dive m=0$ to spatial gradients of the energy $m=-\rho\nabla\frac{\delta\E}{\delta\rho}$.
This gives the PDE in the end, but really requires a chain rule (computing the derivative in time of the energy $\E$ along a curve).
This is well understood for classical functionals \cite[chapter 10]{AGS}, less so here, and we did not fully succeed in this respect (see Theorem~\ref{theo:slope_controls_Fisher} and our Conjecture~\ref{conj}).

The second classical approach is directly PDE-oriented and less related to dissipation:
Leveraging ad-hoc tools from optimal transportation theory, one usually writes down the discrete Euler-Lagrange optimality condition for each step $\rho^n\leadsto\rho^{n+1}$ of the JKO scheme, and tries passing to the limit $\tau\to 0$ to retrieve the continuous PDE directly from the one-step optimality.
In the classical Fokker-Planck case this strongly relies on the Brenier-McCann theorem \cite{brenier1987decomposition,mccann2001polar}, allowing to relate the optimal map in $\rho^{n+1}=T^n\pf\rho^n$ with the energy.
More precisely, the Euler-Lagrange equation $\frac{T^n(x)-x}{\tau}=-\nabla\frac{\delta\E}{\delta\rho}(\rho^{n+1})$ typically gives the discrete velocity field driving particles around in the end (displacement $T^n(x)-x$ divided by time $\tau$).
This technical tool is unfortunately not available here, since our entropy functional always forces $\rho^n=\o^n+\g^n$ to have a singular part $\g^n$ supported on the boundary and therefore systematically prevents any application of the Brenier-McCann theorem (which precisely requires $\rho$ not to give mass to small $\H^{d-1}$ sets in dimension $d$!).
In some sense the bulk-interface interaction, corresponding to the $\partial_\nu\o$ exchange term in \eqref{eq:sticky_fokker_planck_GF}, forces mass splitting already at the discrete level.
In order to circumvent this technical obstacle and still obtain a useful Euler-Lagrange equation we perform instead an ad-hoc $\eps$-regularization of the entropy functional, and show that the barycentric momentum conveys enough information in order to first pass to the limit $\eps\to 0$ and then take $\tau\to 0$.
\\

Let us now fix some notations.
We always work in a smooth, bounded, convex domain $\O\subset\R^d$, $d\geq 2$.
(We assume convexity for technical convenience only, but the result should hold for general domains.)
We set
$$
H(z)
\coloneqq
z\log z -z+1 \geq 0,
\hspace{1cm}z\geq 0,
$$
and the reference measure will always be
$$
\mu
\coloneqq
\LO+\LG\in \M^+(\bO).
$$
The entropy
\begin{equation}
 \label{eq:def_E}
 \E(\rho)
 \coloneqq
 \H(\rho\,\vert\,\mu)
 =
 \begin{cases}
\int_\bO H\left(\frac{\rd \rho}{\rd\mu}(x)\right) \mu(\rd x) & \mbox{if }\rho\ll\mu\\
+\infty & \mbox{else}
 \end{cases}
\end{equation}
is then nonnegative, strictly convex, and lower semi-continuous.
As suggested by \eqref{eq:formal_dissipation} we define the dissipation
\begin{equation}
\label{eq:def_dissipation_D}
\D(\rho)\coloneqq
  \begin{cases}
\I(\o)+\I(\g) & \mbox{if }\rho=\o+\g\mbox{ and }\o|_{\pO}=\g\\
+\infty & \mbox{else}
  \end{cases}
\end{equation}
(see later on for a rigorous definition of the Fisher information functional $\I$ as well as the precise meaning of the boundary trace $\o|_{\pO}$).
This penalization of the constraint $\o|_\pO=\g$ is not technically artificial and arises intrinsically when trying to compute the metric slope $|\partial\E|$, see Theorem~\ref{theo:slope_controls_Fisher} below.

Given an initial datum $\rho_0\in \P(\bO)$, the JKO scheme consists in initializing $\rho^0=\rho_0$ and solving recursively
\begin{equation}
\label{eq:JKO}
 \rho^{n+1}
 \in
 \underset{\rho\in\P(\bO)}{\argmin}\left\{
 \frac{1}{2\tau}\W^2(\rho,\rho^n)+\E(\rho)
 \right\}.
\end{equation}
With such a discrete sequence one can define the piecewise constant interpolation
$$
\orho^\tau_t\coloneqq \rho^{n+1}
\qqtext{for }t\in (n\tau,(n+1)\tau],
$$
and we will establish
\begin{main_result}
Fix any $\rho_0\in \P(\bO)$ with $\E(\rho_0)<\infty$.
For any small $\tau>0$ the JKO scheme \eqref{eq:JKO} is well-posed, and there is a discrete sequence $\tau\to 0$ such that the interpolant $\orho^\tau$ converges to a continuous curve $\rho:[0,\infty)\to \P(\bO)$
$$
\W\left(\orho^\tau_t,\rho_t\right)\xrightarrow[\tau\to 0]{}0
\hspace{1cm}\mbox{for all }t\geq 0.
$$
The limit satisfies the Energy Dissipation Inequality
$$
\E(\rho_T)+\int_0^T\left(\frac 12\left|\dot \rho_t\right|^2 + \frac 12\D(\rho_t)\right)\rd t \leq \E(\rho_0),
 \hspace{1cm}\forall\,T>0
$$
and is a weak solution of the PDE \eqref{eq:sticky_fokker_planck_GF}.
Moreover, there is $\lambda>0$ only depending on $\O$ such that
\begin{equation}
\label{eq:long_time}
 \H(\rho_t\,\vert\,\bar\mu)\leq e^{-2\lambda t} \H(\rho_0\,\vert\,\bar\mu)
\qtext{and}
|\rho_t-\bar\mu|_{TV}\leq e^{-\lambda t}\sqrt{\frac 12 \H(\rho_0\,\vert\,\bar\mu)},
\qquad\forall\,t\geq 0
\end{equation}
where $\bar\mu\coloneqq \frac 1{\mu(\bO)}\mu$ it the renormalized stationary measure.
Finally, if $\rho_0\leq \overline c \mu$ for some constant $\overline c>0$ (resp. $\rho_0\geq \underline c\mu$ for some $\underline c$) then $\rho_t\leq \overline c \mu$ for all $t\geq 0$ (resp. $\rho_t\geq \underline c\mu$.)
\end{main_result}
It is worth stressing that something is still missing in order to obtain a rigorous metric gradient flow.
Indeed, in order for EDI to fully characterize curves of maximal slope one should really prove that $g(\rho)=\sqrt{\D(\rho)}$ is an \emph{upper gradient} \cite[chapter 1]{AGS}.
The local slope $|\partial\E|$ is always a (weak) upper gradient, and we conjecture that $\D=|\partial\E|^2$ thus it is plausible that $\sqrt{\D}$ should be an upper gradient.
However we only managed to prove that $\D\leq |\partial\E|^2$, and we were also not able to prove directly that $\sqrt{\D}$ is an upper gradient.

Another key concept for the abstract theory of metric gradient flows is that of geodesic convexity, or, in the specific optimal transport framework, McCann's displacement convexity \cite{mccann1997convexity}.
In smooth, complete Riemannian manifolds the $\lambda$-convexity of $\H$ (relatively to the reference volume measure) is equivalent to $\lambda$-Ricci lower bounds.
In Euclidean domains, it is known to hold with $\lambda=0$ if and only if $\Omega$ is convex, which can also be reinterpreted as the fact that the subspace $\P^{ac}_{\LO}(\bO)\subset\P(\bO)$ of absolutely continuous measures (w.r.t. to the reference Lebesgue measure $\LO$) is geodesically convex.
It is worth stressing that this whole picture collapses here:
Due to our choice of reference measure $\mu=\LO+\LG$ the entropy $\E(\rho)=\H(\rho\,\vert\,\mu)$ cannot be $\lambda$-convex for any $\lambda\in\R$, even if the underlying domain $\O$ is strongly uniformly convex.
This is due to the fact that $\P^{ac}_{\mu}(\bO)$ itself fails to be geodesically convex.
For a counterexample, take $\O$ the unit ball, and consider two measures $\rho_0=0+\g_0,\rho_1=0+\g_1$ smoothly supported on the boundary as in Figure~\ref{fig:non_convex}, with finite entropies $\E(\rho_i)=\H(\g_i\,\vert\,\LG)<\infty$.
\begin{figure}[!h]
\centering
\def\svgwidth{8cm}
\begingroup%
  \makeatletter%
  \providecommand\color[2][]{%
    \errmessage{(Inkscape) Color is used for the text in Inkscape, but the package 'color.sty' is not loaded}%
    \renewcommand\color[2][]{}%
  }%
  \providecommand\transparent[1]{%
    \errmessage{(Inkscape) Transparency is used (non-zero) for the text in Inkscape, but the package 'transparent.sty' is not loaded}%
    \renewcommand\transparent[1]{}%
  }%
  \providecommand\rotatebox[2]{#2}%
  \newcommand*\fsize{\dimexpr\f@size pt\relax}%
  \newcommand*\lineheight[1]{\fontsize{\fsize}{#1\fsize}\selectfont}%
  \ifx\svgwidth\undefined%
    \setlength{\unitlength}{231.71429011bp}%
    \ifx\svgscale\undefined%
      \relax%
    \else%
      \setlength{\unitlength}{\unitlength * \real{\svgscale}}%
    \fi%
  \else%
    \setlength{\unitlength}{\svgwidth}%
  \fi%
  \global\let\svgwidth\undefined%
  \global\let\svgscale\undefined%
  \makeatother%
  \begin{picture}(1,1.01654377)%
    \lineheight{1}%
    \setlength\tabcolsep{0pt}%
    \put(0,0){\includegraphics[width=\unitlength,page=1]{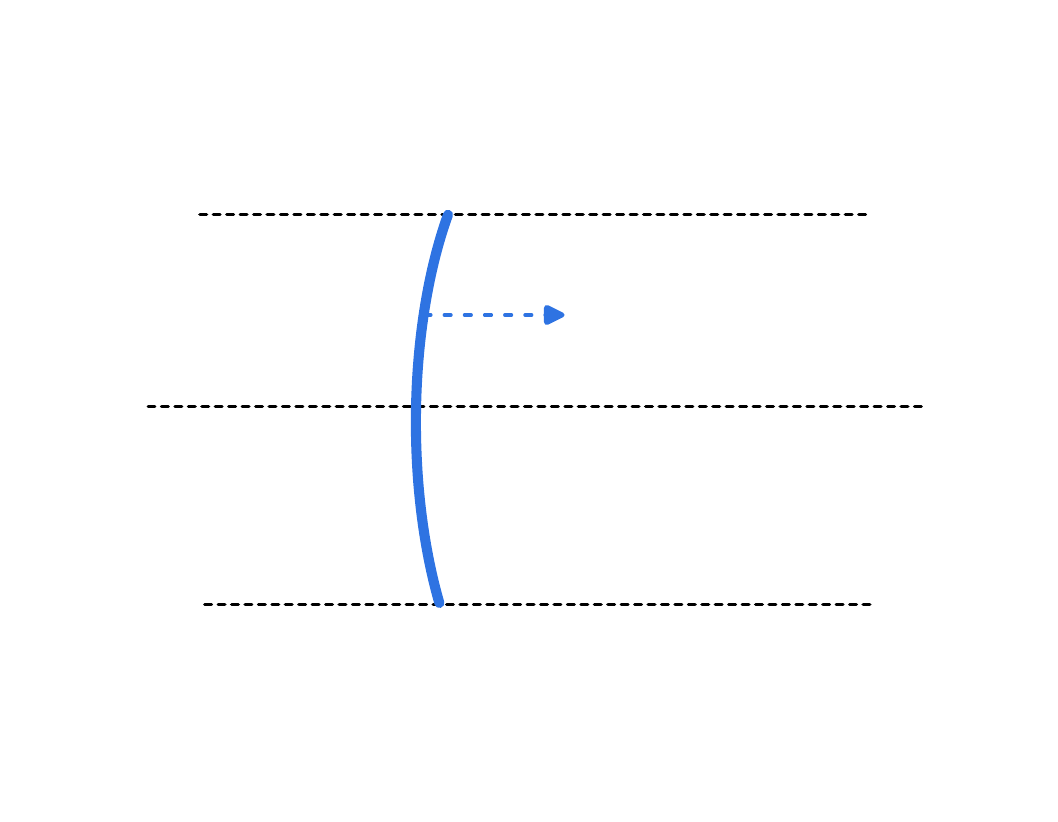}}%
    \put(-0.153326453,0.4){\color[RGB]{226,46,46}\makebox(0,0)[lt]{\lineheight{1.25}\smash{\begin{tabular}[t]{l}$\rho_0=0+\g_0$\end{tabular}}}}%
    \put(0.90596204,0.4){\color[RGB]{226,46,46}\makebox(0,0)[lt]{\lineheight{1.25}\smash{\begin{tabular}[t]{l}$\rho_1=0+\g_1$\end{tabular}}}}%
    \put(0.3013816,0.15215152){\color[RGB]{46,115,226}\makebox(0,0)[lt]{\lineheight{1.25}\smash{\begin{tabular}[t]{l}$\rho_t=\o_t+0$\end{tabular}}}}%
    \put(0.5013816,0.65215152){\makebox(0,0)[lt]{\lineheight{1.25}\smash{\begin{tabular}[t]{l}$\O$\end{tabular}}}}%
    \put(0.6013816,0.75215152){\makebox(0,0)[lt]{\lineheight{1.25}\smash{\begin{tabular}[t]{l}$\G$\end{tabular}}}}%
    \put(0,0){\includegraphics[width=\unitlength,page=2]{dessin_convex.pdf}}%
  \end{picture}%
\endgroup%

\caption{Counterexample to displacement convexity}
\label{fig:non_convex}
\end{figure}
It is not difficult in this configuration to check that the unique geodesic $\rho_t$ remains singular (and absolutely continuous w.r.t. $\H^{d-1}$) and supported strictly inside $\O$ for all intermediate times $t\in(0,1)$.
As a consequence $\rho_t=\o_t+0$ is not absolutely continuous w.r.t. $\mu$ and thus $\E(\rho_t)=+\infty$ for all $t\in(0,1)$, while $\E(\rho_0),\E(\rho_1)<+\infty$.

It is also known that geodesic convexity usually leads to logarithmic Sobolev and entropy-entropy production inequalities, which in turn should yield exponential convergence as $t\to\infty$ towards the unique entropy minimizer, here $\overline\mu=\frac{1}{|\O|+|\G|}\mu\in \P(\bO)$.
Although convexity completely fails here as just discussed, it was proved nonetheless in \cite{bormann2023functional} that $\overline\mu$ always satisfies a \emph{boundary-trace} logarithmic Sobolev inequality.
The usual entropy-entropy production argument will thus apply in our framework as well and will allow us to establish the exponential convergence \eqref{eq:long_time}.
\\

The paper is organized as follows:
In Section~\ref{sec:preliminaries} we fix some notations and define a few relevant objects.
We also establish a (partial) connection between the abstract metric slope $|\partial \E|(\rho)$ of our relative entropy $\E(\rho)=\H(\rho\,\vert\,\mu)$ and its dissipation functional $\D(\rho)$, Theorem~\ref{theo:slope_controls_Fisher}.
Section~\ref{sec:discrete_estimates} gathers several a-priori estimates for the discrete JKO solutions, including the Euler-Lagrange optimality condition and a discrete dissipation inequality based on De Giorgi's \emph{variational interpolant}.
In Section~\ref{sec:convergence} we leverage the previous work to pass to the limit $\tau\to 0$, and prove the main result.
For convenience we split this into Proposition~\ref{prop:abstract_convergence_curve} (convergence to a limiting curve satisfying EDI) and Proposition~\ref{prop:rho_weak_sol} (the limit is a weak solution of the PDE).
Finally, we defer to Appendix~\ref{sec:appendix} some technical statements and lemmas needed along the way.
%
\section{Preliminaries}
\label{sec:preliminaries}
Let us first fix once and for all the framework as well as a few notations.
\begin{itemize}
 \item
 $\O\subset\R^d$ ($d\geq 2$) is a smooth, bounded, convex domain with boundary $\G=\pO$.
 We tend to write $\pO$ for quantities arising from the interior, such as $\o|_{\pO}$, while we prefer the notation $\G$ for quantities intrinsically defined thereon, such as $\g$.
 The Euclidean distance on $\bO$ is denoted by $\dist_{\bO}$, while the intrinsic Riemannian distance on the boundary is denoted $\dist_\G$.
 The full gradient, divergence and Laplacian in $\Omega$ are denoted $\nabla,\dive,\Delta$, while their intrinsic Riemannian counterpart along $\G$ are denoted $\nabla_\G,\dive_\G,\Delta_\G$.
 \item
 We write $\X$ for smooth Riemannian manifolds, possibly with boundaries, and $\dist_\X$ the Riemannian distance.
 Typically we shall take $\X=\bO$, a flat manifold with boundary $\pO$, or $\X=\G$, a curved manifold of its own right (with induced Riemannian metric).
\item
 With a clear abuse of notations we call Lebesgue measure the volume form on a manifold $\X$ and we write indistinctly $\rd x$ for the Lebesgue measures $\LO$ or $\LG$. Unless otherwise specified, and with a slight abuse of notations, the Boltzmann entropy $\H(\nu)$ always denotes the entropy $\H(\nu\,\vert\,\rd x)$ of a nonnegative measure $\nu\in \M^+(\X)$ with respect to this $\rd x$ measure.
 \item
 For probability measures $\rho\in \P(\bO)$ we always decompose $\rho=\rho\mrest\O+\rho\mrest\G\eqqcolon \o+\g$ and identify $\o\in \M^+(\O),\g\in \M^+(\G)$ with their densities with respect to the corresponding Lebesgue measures, i-e we write $\o=\o(x)\cdot \LO$ and $\g=\g(x)\cdot \LG$.
 Typically this notation allows to simply write
 $$
 \E(\rho)
 =
 \H\left(\o+\g\,\vert\,\LO+\LG\right)
 =
 \H(\o)+\H(\g).
 $$
 \item
 The narrow convergence of measures is defined in duality with bounded, continuous test functions
 $$
 \rho^n\narrowcv\rho
 \qquad \mbox{iff}\qquad
 \int_\X \varphi(x)\rho^n(\rd x)  \to  \int_\X \varphi(x)\rho(\rd x),
 \quad\forall\,\varphi\in C_b(\X)
 $$
 as $n\to\infty$
 \item
 For $\mu,\nu\in \M^+(\X)$ with compatible masses $M=\int\mu=\int\nu$ the (squared) Wasserstein distance is
 $$
 \W^2_\X(\mu,\nu)=\min\limits_{\pi\in \Pi(\mu,\nu)}\iint_{\X\times\X}\dist_\X^2(x,y)\pi(\rd x,\rd y),
 $$
 where $\Pi(\mu,\nu)$ the set of admissible plans $\pi\in\M^+(\X\times\X)$ with mass $M$ and marginals $\pi_x=\mu,\pi_y=\nu$.
 We refer to \cite{OTAM,villani2003topics,villani_BIG} for more material on optimal transport theory.
 Unless otherwise specified $\W$ will simply denote $\W_{\bO}$, the Wasserstein distance over $\bO$ for the quadratic cost $\dist_\bO^2(x,y)=|y-x|^2$.
 \item
 Following \cite{AGS}, in a metric space $(\mathsf X,\dist)$ a curve $\{x_t\}_{t\in I}$ is said to be absolutely continuous if
 $$
 \dist(x_{t_0},x_{t_1})\leq \int_{t_0}^{t_1} \eta(t)\rd t
 $$
 for some $\eta\in L^1(I)$ and all $[t_0,t_1]\subseteq I$.
 In that case the \emph{metric speed}
 $$
 |\dot x_t|\coloneqq \lim\limits_{h\to 0}\frac{\dist(x_{t+h},x_t)}{h}
 $$
 exists for a.e. $t\in I$ and is the smallest function $\eta$ satisfying the above inequality.
 The (local) \emph{metric slope} of a functional $\mathsf E:\mathsf X\to \R\cup\{+\infty\}$ is defined as
 \begin{equation}
\label{eq:def_metric_slope}
 |\partial \mathsf E|(x)=\limsup\limits_{y\to x}\frac{[\mathsf E(x)-\mathsf E(y)]^+}{\dist(x,y)}.
 \end{equation}
 \item
 We use the $t$ subscript for curves of measures $t\mapsto\rho_t=\o_t+\g_t$, while the time derivative is denoted $\partial_t$.
\end{itemize}

\begin{defi}[Fisher information {\cite[defs. 10.4.15 and 10.4.16]{AGS}}]
On a smooth Riemannian manifold (possibly with boundary) $\X$ we say that a nonnegative measure $\mu\in \M^+(\X)$ has logarithmic derivative if $\mu=f(x)\cdot \rd x$ for some $f\in L^1(\rd x)$ and if its distributional gradient $\nabla f\in L^1_{loc}(\rd x)$ with moreover $|\nabla f|\ll f$.
In that case we write with a slight abuse of notations $\nabla\log f=\frac{\nabla f}{f}$ for the Radon-Nikodym derivative.
The Fisher information is then
$$
\I(\mu)\coloneqq
\begin{cases}
 \int_\X |\nabla\log f|^2 \rd \mu & \mbox{if }\mu=f\cdot\rd x\mbox{ has logarithmic derivative }\nabla\log f\\
 +\infty & \mbox{otherwise}
\end{cases}
$$
\end{defi}
We shall often exploit the following well-known facts, which we recall here without proof.
\begin{lem}[Properties of the Fisher information]
\label{lem:props_Fisher}
 For any nonnegative, absolutely continuous $f\in L^1(\X)$ with mass $M=\int_\X f\,\rd x<\infty$ we write $\mu=f\cdot \rd x$.
 Then
 \begin{itemize}
 \item
 One has the equivalent representations
 \begin{equation*}
 \I(\mu)
 =
 \int_\X \left|\nabla\log f\right|^2f
 =
 \int_\X \left|\frac{\nabla f}{f}\right|^2f
 =
 \int_\X\left|\frac{\nabla f}{\sqrt f}\right|^2
 =
 4\left\|\nabla\sqrt f\right\|^2_{L^2}
  \end{equation*}
 \item
 In dimension $d$ there holds
 \begin{equation}
 \label{eq:Fisher_mass_control_W1p}
 \left\|f\right\|_{W^{1,p}(\X)}\leq C_{p,\X}[M+\I(\mu)]
 \qqtext{for}
 \begin{cases}
p=2 & \mbox{if } d=1
\\
p=2^- & \mbox{if }d=2
\\
p=\frac{d}{d-1} & \mbox{if }d\geq 3
 \end{cases}.
 \end{equation}
  ($p=2^-$ means as usual ``for any $p<2$'')
 \item
 $\I$ is convex and lower semi-continuous for the narrow convergence.
\end{itemize}
\end{lem}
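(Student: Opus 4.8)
The plan is to derive all three items from the single observation that, whenever $\I(\mu)<\infty$, the square root $\sqrt f$ belongs to $H^1(\X)$ with $\|\sqrt f\|_{L^2}^2=M$ and $4\|\nabla\sqrt f\|_{L^2}^2=\I(\mu)$ — which is essentially the content of the first bullet, so I would prove that one first. The identities $\int_\X|\nabla\log f|^2f=\int_\X|\nabla f/f|^2f=\int_\X|\nabla f|^2/f=\int_\X|\nabla f/\sqrt f|^2$ are pure algebra once one adopts the convention $0/0=0$, which is legitimate because the standing hypothesis $|\nabla f|\ll f$ forces $\nabla f=0$ a.e.\ on $\{f=0\}$; combined with $\I(\mu)<\infty$ and Cauchy--Schwarz (so $\int_\X|\nabla f|\le\I(\mu)^{1/2}M^{1/2}$) this already shows $f\in W^{1,1}(\X)$. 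The only substantial point is the chain rule $\nabla\sqrt f=\nabla f/(2\sqrt f)$ in the distributional sense, giving the last equality $\int_\X|\nabla f|^2/f=4\|\nabla\sqrt f\|_{L^2}^2$. I would obtain it by an $\eps$-regularization: apply the elementary Lipschitz chain rule to $t\mapsto\sqrt{t+\eps}$ to get $\nabla\sqrt{f+\eps}=\nabla f/(2\sqrt{f+\eps})$, then let $\eps\downarrow0$, using $\sqrt{f+\eps}\to\sqrt f$ in $L^2$ and $\nabla f/(2\sqrt{f+\eps})\to\nabla f/(2\sqrt f)$ in $L^2$ by dominated convergence, with dominating function $\tfrac12|\nabla f|\,\mathbf 1_{\{f>0\}}/\sqrt f\in L^2$ (this is where $\I(\mu)<\infty$ enters). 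When $\I(\mu)=+\infty$ all displayed quantities are read as $+\infty$ and there is nothing to prove.

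For the Sobolev estimate I would feed the first bullet into the Sobolev embedding on the bounded smooth manifold $\X$ of dimension $d$ (with or without boundary; both $\X=\bO$ and $\X=\G$ are of this type): from $\|\sqrt f\|_{H^1}^2=M+\tfrac14\I(\mu)\le M+\I(\mu)$ and $H^1(\X)\hookrightarrow L^s(\X)$ with $s=\tfrac{2d}{d-2}$ for $d\ge3$, any $s<\infty$ for $d=2$, and $s=\infty$ for $d=1$, one gets $\|f\|_{L^{s/2}}=\|\sqrt f\|_{L^s}^2\le C_{s,\X}(M+\I(\mu))$, and since $\X$ has finite volume and $s/2\ge\tfrac d{d-1}$ this controls $\|f\|_{L^{d/(d-1)}}$ (resp.\ $\|f\|_{L^p}$ for every $p<\infty$, resp.\ $\|f\|_{L^\infty}$). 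Writing $\nabla f=2\sqrt f\,\nabla\sqrt f$ and applying H\"older with $\tfrac1p=\tfrac1s+\tfrac12$ — which is exactly $p=\tfrac d{d-1}$ (resp.\ $p=2^-$, resp.\ $p=2$) — gives $\|\nabla f\|_{L^p}\le2\|\sqrt f\|_{L^s}\|\nabla\sqrt f\|_{L^2}\le C_{s,\X}(M+\I(\mu))^{1/2}\I(\mu)^{1/2}\le C_{p,\X}(M+\I(\mu))$. Adding the two estimates yields \eqref{eq:Fisher_mass_control_W1p}, which is vacuous when $\I(\mu)=+\infty$.

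For convexity I would use the representation $\I(\mu)=\int_\X|\nabla f|^2/f$: the integrand $(a,b)\mapsto|b|^2/a$ is the perspective of $b\mapsto|b|^2$, hence jointly convex on $\R_{\ge0}\times\R^d$, and since $f\mapsto(f,\nabla f)$ is linear and, for $f_\theta=(1-\theta)f_0+\theta f_1$, one has $\nabla f_\theta=(1-\theta)\nabla f_0+\theta\nabla f_1$ a.e.\ (all three vanishing wherever $f_\theta=0$), the pointwise inequality integrates to $\I((1-\theta)\mu_0+\theta\mu_1)\le(1-\theta)\I(\mu_0)+\theta\I(\mu_1)$, trivially also when one right-hand term is infinite. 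For lower semicontinuity along $\mu^n\narrowcv\mu$ with $\liminf_n\I(\mu^n)=:L<\infty$, I would pass to a subsequence realizing the liminf (so that eventually $\mu^n=f_n\,\rd x$ and, using $\mu^n(\X)\to M$, $\|\sqrt{f_n}\|_{H^1}^2=\mu^n(\X)+\tfrac14\I(\mu^n)$ is bounded), extract via Rellich--Kondrachov a further subsequence with $\sqrt{f_n}\rightharpoonup g$ weakly in $H^1(\X)$ and strongly in $L^2(\X)$, observe that then $f_n\to g^2$ in $L^1(\X)$, which together with $\mu^n\narrowcv\mu$ forces $\mu=g^2\,\rd x$ and hence $g=\sqrt f$, and conclude by weak-$H^1$ lower semicontinuity of the Dirichlet energy, $\I(\mu)=4\|\nabla\sqrt f\|_{L^2}^2\le4\liminf_n\|\nabla\sqrt{f_n}\|_{L^2}^2=L$. (Alternatively, one checks the duality formula $\I(\mu)=\sup\{\int_\X(-2\dive\xi-|\xi|^2)\,\rd\mu\}$, the supremum running over smooth vector fields $\xi$ compactly supported in the interior of $\X$, or tangent to $\partial\X$, and uses that a supremum of narrowly continuous affine functionals is lower semicontinuous.)

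The only step demanding real care is the $\eps$-regularization behind the first bullet: it is precisely the non-Lipschitzness (indeed non-finiteness) of $\sqrt{\cdot}$ and $\log$ at the origin that has to be dealt with, which is why one truncates and exploits that $\nabla f$ vanishes a.e.\ on $\{f=0\}$. Everything else is a routine consequence of the Sobolev embedding and H\"older (second bullet), of the convexity of the perspective function, and of Rellich's theorem (third bullet).
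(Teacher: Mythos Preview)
Your proof is correct and complete. The paper does not prove this lemma at all --- it is introduced as ``well-known facts, which we recall here without proof'' --- so there is no approach to compare against; you have simply supplied the standard arguments (the $\eps$-regularized chain rule for $\sqrt f$, Sobolev embedding plus H\"older on $\nabla f=2\sqrt f\,\nabla\sqrt f$, convexity of the perspective function, and Rellich--Kondrachov for the lower semicontinuity) that the authors chose to omit.
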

Recall that, on smooth bounded domains $\O\subset\R^d$, the trace operator $\tr:W^{1,p}(\O)\to L^p(\pO)$ is continuous for $p\geq 1$ and compact for $p> 1$ \cite[chapter 18]{leoni2017first}.
Owing to \eqref{eq:Fisher_mass_control_W1p} we see that a measure $\mu=f\cdot \rd x$ with $\I(\mu)<\infty$ always has a well-defined trace $f|_\pO=\tr f\in L^{p}(\pO)$ for some $p>1$.
This clarifies the rigorous meaning of the dissipation functional $\D$ in \eqref{eq:def_dissipation_D}:
Either $\I(\o)$ and $\I(\g)$ are finite and therefore the trace $\o|_\pO=\tr \o$ is well-defined, or simply $\D(\rho)=+\infty$.
This will also give a meaning to the boundary condition $\o|_\pO=\g$ in the Fokker-Planck equation \eqref{eq:sticky_fokker_planck_GF}, whose solutions will satisfy by construction $\int_0^T\D(\rho_t)\,\rd t<+\infty$ and thus $\D(\rho_t)<+\infty$ for a.e. $t\geq 0$.
\\

The next result is specific to our optimal transport context with $\E(\rho)=\H(\o)+\H(\g)$, and will be key for the subsequent analysis.
It will allow to retrieve simultaneously the boundary conditions as well as natural dissipation estimates for the PDE directly from the small-time step limit $\tau\to 0$ in the JKO scheme, and more importantly justifies our claim in the introduction that the trace compatibility arises from the dissipation only.
\begin{theo}
\label{theo:slope_controls_Fisher}
Let $\D$ be the dissipation functional in \eqref{eq:def_dissipation_D}.
Then
 \begin{equation}
 \label{eq:slope_control}
  |\partial\E|^2(\rho)
  \geq
  \D(\rho)
 \end{equation}
 and $\D$ is (sequentially) narrowly lower semi-continuous.
\end{theo}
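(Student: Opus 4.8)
The plan is to combine the classical ``perturbation along vector fields'' computation, which produces the Fisher bounds, with an ad hoc \emph{mass-transfer} perturbation designed to force the Dirichlet trace condition. We may assume $|\partial\E|(\rho)<\infty$, since otherwise \eqref{eq:slope_control} is trivial; because $\Dom(\E)$ is $\W$-dense in $\P(\bO)$, a finite slope then forces $\E(\rho)<\infty$, so $\rho=\o+\g$ with $\o=f\cdot\LO$ and $\g=h\cdot\LG$ absolutely continuous. The key tool will be a \emph{master inequality}: for every smooth vector field $\xi$ on $\bO$ tangent to $\pO$ — in particular for every $\xi\in C^\infty_c(\O;\R^d)$ — its flow $\Phi^\xi_t$ is, for small $|t|$, a diffeomorphism of $\bO$ preserving $\O$ and $\pO$ (a Gronwall estimate for the signed distance to $\pO$ along trajectories), hence $\Phi^\xi_t\pf\rho$ keeps its interior/boundary splitting; the change-of-variables formula gives, with no regularity of $\rho$, $\E(\Phi^\xi_t\pf\rho)=\E(\rho)-\int_\O\log J^\O_t\,d\o-\int_\G\log J^\G_t\,d\g$ with Jacobians whose $t$-derivatives at $0$ are $\dive\xi$ and $\dive_\G\xi$, so $\frac{d}{dt}\big|_{t=0}\E(\Phi^\xi_t\pf\rho)=-\int_\O\dive\xi\,d\o-\int_\G\dive_\G\xi\,d\g$; combined with $\W(\rho,\Phi^\xi_t\pf\rho)\le t\|\xi\|_{L^2(\rho)}+o(t)$ (transport plan $(\mathrm{id},\Phi^\xi_t)\pf\rho$) and the definition of the slope applied to $\pm\xi$, this yields
\[
\Big|\int_\O\dive\xi\,d\o+\int_\G\dive_\G\xi\,d\g\Big|\ \le\ |\partial\E|(\rho)\,\Big(\int_\O|\xi|^2\,d\o+\int_\G|\xi|^2\,d\g\Big)^{1/2}.
\]

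From here the Fisher bounds follow by Riesz representation. Testing with $\xi\in C^\infty_c(\O;\R^d)$ shows $\xi\mapsto\int_\O\dive\xi\,d\o$ is bounded on the dense subspace $C^\infty_c(\O;\R^d)\subset L^2(\o;\R^d)$, so there is $v\in L^2(\o;\R^d)$ with $\int_\O\dive\xi\,d\o=\int_\O v\cdot\xi\,d\o$ and $\|v\|_{L^2(\o)}\le|\partial\E|(\rho)$; unravelling distributions, $\nabla f=-vf$, i.e. $f$ has logarithmic derivative $-v$ and $\I(\o)=\|v\|^2_{L^2(\o)}<\infty$, so by \eqref{eq:Fisher_mass_control_W1p} $\o\in W^{1,p}(\O)$ for some $p>1$ with $\o|_\pO\in L^p(\pO)$ and $|\nabla f|\in L^1(\O)$. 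For a smooth tangent field $\zeta$ on $\G$ I would test the master inequality with a tangent extension $\tilde\zeta^\delta$ supported in the $\delta$-tube around $\G$, with $\tilde\zeta^\delta|_\G=\zeta$ and $\|\tilde\zeta^\delta\|_\infty\le C(\zeta)$ uniformly in $\delta$: integrating the interior term by parts (no boundary contribution since $\tilde\zeta^\delta\cdot\nu=0$) turns it into $-\int_\O\nabla\log f\cdot\tilde\zeta^\delta\,d\o$, which $\to0$ as $\delta\to0$ because $|\nabla f|\in L^1$ and the tube shrinks to a null set, while $\int_\O|\tilde\zeta^\delta|^2\,d\o\to0$; in the limit $|\int_\G\dive_\G\zeta\,d\g|\le|\partial\E|(\rho)\|\zeta\|_{L^2(\g)}$, and Riesz on the closed manifold $\G$ gives $w\in L^2(\g;T\G)$ with $\nabla_\G h=-wh$ and $\I(\g)=\|w\|^2_{L^2(\g)}<\infty$. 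Finally, testing with $\xi=\xi^\O+\tilde\zeta^\delta$ ($\xi^\O\in C^\infty_c(\O)$, disjoint supports for $\delta$ small) and letting $\delta\to0$ gives $|\langle v,\xi^\O\rangle_{L^2(\o)}+\langle w,\zeta\rangle_{L^2(\g)}|\le|\partial\E|(\rho)(\|\xi^\O\|_{L^2(\o)}^2+\|\zeta\|_{L^2(\g)}^2)^{1/2}$, hence $\I(\o)+\I(\g)=\|v\|_{L^2(\o)}^2+\|w\|_{L^2(\g)}^2\le|\partial\E|^2(\rho)$.

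The delicate point — the one that really produces the Dirichlet condition — is to show $\o|_\pO=\g$; tangent perturbations cannot detect it, so the idea is to transfer mass between a thin boundary layer and the boundary itself. Suppose $P\subset\{\g>\o|_\pO\}$ has positive $\LG$-measure; for $\delta>0$ small, build a competitor $\rho_\delta$ by removing $\delta^{3/2}\g(y)\,dy$ from $\g\mrest P$ and spreading it, via the nearest-point projection, uniformly over the length-$\delta$ normal segments above $P$. The balance of scales is the crux: the added interior density is $O(\delta^{1/2}\g)$, small and bounded, so $\E(\rho_\delta)<\infty$ and a first-order expansion of the entropy is legitimate (recovering the trace of $f$ by vertical averaging in $L^1(\G)$), giving $\E(\rho)-\E(\rho_\delta)=\delta^{3/2}\int_P\g\log(\g/\o|_\pO)\,dy+o(\delta^{3/2})\ge c\,\delta^{3/2}$ with $c>0$, while $\W^2(\rho,\rho_\delta)\lesssim\delta^2\cdot\delta^{3/2}$; hence $|\partial\E|(\rho)\gtrsim\delta^{3/2}/\delta^{7/4}=\delta^{-1/4}\to\infty$, a contradiction. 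Thus $\g\le\o|_\pO$ a.e., and the symmetric construction (moving a fraction of the interior-layer mass onto the boundary) gives $\g\ge\o|_\pO$; so $\o|_\pO=\g$ and, with the previous paragraph, $\D(\rho)=\I(\o)+\I(\g)\le|\partial\E|^2(\rho)$, which is \eqref{eq:slope_control}. I expect this step — justifying finiteness and the first-order expansion of the competitor's entropy, handling the possibly unbounded $\log f$ in the averaging, and pinning down the scale $\delta^{3/2}$ (large enough to beat the transport cost, small enough for the expansion) — to be the main obstacle.

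For the lower semicontinuity, take $\rho_n\narrowcv\rho$ with $L:=\liminf_n\D(\rho_n)<\infty$ and a subsequence along which $\D(\rho_n)\to L$: then $\o_n|_\pO=\g_n$ and $\I(\o_n),\I(\g_n)$ are bounded, so by \eqref{eq:Fisher_mass_control_W1p} $\{\o_n\}$, $\{\g_n\}$ are bounded in $W^{1,p}(\O)$, $W^{1,p}(\G)$ for some $p>1$; passing to a further subsequence, $\o_n\rightharpoonup\o_\infty$ in $W^{1,p}(\O)$ and in $L^1(\O)$, $\g_n\rightharpoonup\g_\infty$ in $W^{1,p}(\G)$ and in $L^1(\G)$, and by compactness of the trace operator ($p>1$) $\o_n|_\pO\to\o_\infty|_\pO$ in $L^p(\pO)$. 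Passing to the limit in $\o_n|_\pO=\g_n$ gives $\o_\infty|_\pO=\g_\infty$, while narrow convergence of the restrictions forces $\rho=\o_\infty+\g_\infty$ (the canonical interior/boundary splitting). Lower semicontinuity of $\I$ then yields $\D(\rho)=\I(\o_\infty)+\I(\g_\infty)\le\liminf_n\I(\o_n)+\liminf_n\I(\g_n)\le L$, as desired.
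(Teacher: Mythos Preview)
Your approach is largely sound and departs from the paper's in two interesting places.

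For the bound $\I(\o)+\I(\g)\le|\partial\E|^2(\rho)$, the paper runs the heat flows on $\O$ and $\G$ separately, glues the corresponding optimal plans to get $\W^2(\rho,\rho_t)\le\W^2_\O(\o,\o_t)+\W^2_\G(\g,\g_t)$, and then invokes the duality formula for the slope together with an appendix result (Proposition~\ref{prop:slope_controls_Fisher}) relating the short-time heat flow to the Fisher information. Your vector-field/Riesz route is a genuine and self-contained alternative: it avoids any auxiliary PDE statement and delivers the joint bound directly via the ``master inequality''. This is arguably cleaner.

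For the trace compatibility the two arguments share the same idea (mass transfer across $\G$) but are scaled differently. The paper decouples three parameters --- a tangential radius $\delta$, a normal thickness $\eps$, and a mass fraction $\theta$ --- localizes at a single Lebesgue point $x_0$, and imposes $\eps^{p-1}=\mathcal O(\delta^{d-1})$ precisely so that a boundary Lebesgue-differentiation lemma (Lemma~\ref{lem:W1p_boundary_lebesgue_diff}) applies to $\o\in W^{1,p}$. Your construction ties the mass fraction $\delta^{3/2}$ to a single layer scale $\delta$ over a fixed set $P$; this is tidier but leaves less room. Two cautions. First, you detail the direction $\g>\o|_\pO$ (boundary $\to$ interior), which the paper explicitly notes is the \emph{easier} case; the reverse (interior $\to$ boundary) is where control of layer averages of $\o$ is genuinely needed and where the $W^{1,p}$ differentiation lemma does the work --- your one-line ``symmetric construction'' hides this. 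Second, your claim that the added density $\delta^{1/2}\g$ is ``bounded'' presumes $\g\in L^\infty(P)$ (arrangeable by shrinking $P$, but worth saying), and your ``first-order expansion'' involves $\log(f+\delta^{1/2}\g)$ with no a priori sign control; the paper handles this not by expansion but by convexity plus an explicit truncation $H'=\ulogM+\ologM$ at a level $M>\g_0$, together with a Vitali equi-integrability argument. These are exactly the obstacles you flag, and they are where the paper spends most of its effort.

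Your lower-semicontinuity argument is essentially identical to the paper's.
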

We were not able to prove the reverse inequality, and for future reference we record here
\begin{conj}
\label{conj}
Equality holds in \eqref{eq:slope_control}.
\end{conj}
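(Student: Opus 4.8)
We want to show $|\partial\E|^2(\rho)\ge\D(\rho)$, and that $\D$ is narrowly lower semi-continuous. The strategy for the slope bound is the standard "perturb along a gradient-flow-like direction and read off the slope" argument, adapted to the bulk/boundary coupling. Fix $\rho=\o+\g$ with $|\partial\E|(\rho)<\infty$ (otherwise nothing to prove). First I would argue that finiteness of the slope forces $\rho\ll\mu$, i.e. both $\o$ and $\g$ are absolutely continuous with densities we still call $\o,\g$; this is classical for the entropy (if the singular part is nontrivial one can decrease $\E$ by an amount that is superlinear in the transport cost, making the slope infinite), and here one must check it separately for the interior piece and the boundary piece, using that $\mu=\LO+\LG$ splits. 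Next, given a smooth vector field: pick $\xi\in C^\infty_c(\O;\R^d)$ compactly supported in the interior and a tangential field $\zeta\in C^\infty(\G;T\G)$ on the boundary, and push $\rho$ forward by the flow of the combined field $(\xi,\zeta)$, call it $\rho_s=(\mathrm{id}+s\,\text{field})\pf\rho$. Then $\W(\rho,\rho_s)\le s\big(\|\xi\|_{L^2(\o)}^2+\|\zeta\|_{L^2(\g)}^2\big)^{1/2}+o(s)$ because the two pieces move independently and the Euclidean displacement is the given field. Differentiating $s\mapsto\E(\rho_s)=\H(\o_s)+\H(\g_s)$ at $s=0$ gives $-\int_\O \o\,\dive\xi-\int_\G\g\,\dive_\G\zeta$ (the usual first variation of entropy along a transport perturbation). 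Hence the definition of the slope yields
\[
|\partial\E|(\rho)\,\Big(\|\xi\|_{L^2(\o)}^2+\|\zeta\|_{L^2(\g)}^2\Big)^{1/2}
\ \ge\ -\int_\O \o\,\dive\xi\;-\;\int_\G\g\,\dive_\G\zeta .
\]

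**From the duality inequality to the Fisher information and the trace condition.** The right-hand side is a bounded linear functional of $(\xi,\zeta)$ on the $L^2(\o)\times L^2(\g)$ "tangent space," so by Riesz/Cauchy–Schwarz there exist $v\in L^2(\o;\R^d)$ and $w\in L^2(\g;T\G)$ with $-\int_\O\o\,\dive\xi=\int_\O v\cdot\xi\,\o$, $-\int_\G\g\,\dive_\G\zeta=\int_\G w\cdot\zeta\,\g$, and $\|v\|_{L^2(\o)}^2+\|w\|_{L^2(\g)}^2\le|\partial\E|^2(\rho)$. Testing first with $\xi$ supported away from $\partial\O$ identifies $v\o=\nabla\o$ distributionally, so $\o$ has logarithmic derivative and $\int_\O|v|^2\o=\I(\o)$; likewise $w\g=\nabla_\G\g$ and $\int_\G|w|^2\g=\I(\g)$. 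This already gives $\I(\o)+\I(\g)\le|\partial\E|^2(\rho)$. The remaining — and genuinely delicate — point is to recover the boundary compatibility $\o|_{\pO}=\g$; if this fails, $\D(\rho)=+\infty$ and we would need the slope to be $+\infty$ too, contradicting our standing assumption, so in fact one must \emph{prove} that finite slope forces the trace condition. The idea: now allow $\xi\in C^\infty(\bO;\R^d)$ that need \emph{not} vanish on $\pO$ but whose normal component $\xi\cdot\nu$ on $\pO$ is prescribed, and push $\rho$ forward in a way that moves interior mass across the boundary into $\G$ (a "sticky" perturbation transferring $\o$-mass to $\g$-mass along the inward normal, staying admissible because $\O$ is convex so the flow stays in $\bO$). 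Carefully computing both $\W(\rho,\rho_s)=O(s)$ and the entropy variation — which now picks up a boundary term of the form $-\int_{\pO}(\text{something involving }\log\o|_{\pO}-\log\g)(\xi\cdot\nu)$ from integrating by parts $\int_\O(1+\log\o)\dive(\o\xi)$ against the nonvanishing normal part — forces, via the finiteness of $|\partial\E|(\rho)$ and the arbitrariness of $\xi\cdot\nu$, that $\log\o|_{\pO}=\log\g$, i.e. $\o|_{\pO}=\g$ in the trace sense of the preamble. This is the step I expect to be the main obstacle: one must make sense of the trace $\o|_{\pO}$ (legitimate since $\I(\o)<\infty$ gives $\o\in W^{1,p}$, $p>1$, by \eqref{eq:Fisher_mass_control_W1p}), control the remainder terms in the displacement expansion uniformly, and handle the fact that $\log\o$ and $\log\g$ may be unbounded — likely one truncates, e.g. works with $\o\vee\delta$, $\g\vee\delta$, or tests with $\xi$ supported where both densities are bounded below, and then sends the truncation away.

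**Lower semicontinuity of $\D$.** For the last assertion, take $\rho^n=\o^n+\g^n\narrowcv\rho=\o+\g$ with $\liminf\D(\rho^n)<\infty$; pass to a subsequence realizing the liminf with $\D(\rho^n)\le C$. Then $\I(\o^n)+\I(\g^n)\le C$, so by \eqref{eq:Fisher_mass_control_W1p} the densities $\o^n$ are bounded in $W^{1,p}(\O)$ and $\g^n$ in $W^{1,p}(\G)$ for some $p>1$; up to a further subsequence $\o^n\rightharpoonup\o$ weakly in $W^{1,p}(\O)$ and $\g^n\rightharpoonup\g$ weakly in $W^{1,p}(\G)$ (the narrow limits must be $\o,\g$). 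Compactness of the trace operator for $p>1$ gives $\o^n|_{\pO}\to\o|_{\pO}$ strongly in $L^p(\pO)$, and $\g^n\to\g$ strongly in $L^p(\G)\subset L^p(\pO)$ by Rellich; since $\o^n|_{\pO}=\g^n$ for each $n$, passing to the limit yields $\o|_{\pO}=\g$, so the constraint in \eqref{eq:def_dissipation_D} is preserved. Finally the narrow lower semicontinuity of $\I$ (third bullet of Lemma~\ref{lem:props_Fisher}) gives $\I(\o)\le\liminf\I(\o^n)$ and $\I(\g)\le\liminf\I(\g^n)$, hence $\D(\rho)=\I(\o)+\I(\g)\le\liminf\big(\I(\o^n)+\I(\g^n)\big)=\liminf\D(\rho^n)$. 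This part is routine; all the difficulty of the theorem is concentrated in extracting the Dirichlet boundary condition from the metric slope.
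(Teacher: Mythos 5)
The statement you set out to prove is Conjecture~\ref{conj}, which asserts that \emph{equality} holds in \eqref{eq:slope_control}, i.e.\ $|\partial\E|^2(\rho)=\D(\rho)$. Since the paper already establishes $|\partial\E|^2(\rho)\ge\D(\rho)$ in Theorem~\ref{theo:slope_controls_Fisher}, the entire content of the conjecture is the reverse bound $|\partial\E|^2(\rho)\le\D(\rho)$. Your proposal opens with ``We want to show $|\partial\E|^2(\rho)\ge\D(\rho)$, and that $\D$ is narrowly lower semi-continuous'' --- that is a (re)derivation of the already-proven theorem, not a proof of the conjecture. Nowhere do you even state, let alone attack, the upper bound $|\partial\E|^2\le\D$.

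This is not a small oversight: the architecture of your argument is structurally incapable of producing the missing inequality. Every step lower-bounds $|\partial\E|(\rho)$ by evaluating the quotient in \eqref{eq:def_metric_slope} along \emph{chosen} perturbations (flows of smooth vector fields, or ad-hoc bulk-to-boundary transfers). The slope is a $\limsup$/supremum over \emph{all} nearby $\tilde\rho$, so testing particular ones only gives $|\partial\E|(\rho)\ge\cdots$. To prove $|\partial\E|^2(\rho)\le\D(\rho)$ one needs a uniform two-sided control of $\E(\rho)-\E(\tilde\rho)$ against $\W(\rho,\tilde\rho)$ over \emph{arbitrary} competitors; in the standard theory \cite[Chapter~10]{AGS} this comes from $\lambda$-geodesic convexity of the functional, which (as the paper discusses explicitly, with a counterexample in Figure~\ref{fig:non_convex}) fails here for any $\lambda\in\R$, because $\P^{ac}_\mu(\bO)$ is not geodesically convex. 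That obstruction is precisely why the authors leave the equality as a conjecture, and your proposal does not confront it. (As a secondary remark: even for the inequality you do treat, pushing interior mass onto the boundary $\G$ via the flow of a vector field does not work as stated --- a diffeomorphic flow of $\bO$ maps $\O$ into $\O$ and cannot create singular $\H^{d-1}$-mass on $\pO$; the paper's proof of the trace condition instead splits off a $\theta$-fraction of $\o$ on a thin layer $\Oed$ and projects it normally, which is not the flow of any smooth field.)
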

\begin{proof}[Proof of Theorem~\ref{theo:slope_controls_Fisher}]
Let us first show that $|\partial\E|^2(\rho)\geq \I(\o)+\I(\g)$.
Take $\rho=\o+\g\in\Dom(\E)$ with masses $M_\O=\int_\O\o\geq 0$ and $M_\G=\int_\G\g\geq 0$, $M_\O+M_\G=1$.
For any arbitrary $\tilde\o\in \M^+(\bO),\tilde\g\in\M^+(\G)$ with same masses $M_\O,M_\G$ let $\tilde\rho=\tilde\o+\tilde\g\in\P(\bO)$.
Let $\pi_\O\in\Pi(\o,\tilde\o)$ and $\pi_\G\in \Pi(\g,\tilde\g)$ be optimal plans for the transportation problems set on $(\bO,\dist_\bO)$ and $(\G,\dist_\G)$, respectively.
The plan $\pi\coloneqq \pi_\O+\pi_\G$ is easily seen to be admissible for the transportation problem from $\rho=\o+\g$ to $\tilde\rho=\tilde\o+\tilde\g$, set on $(\bO,\dist_\bO)$.
Since the induced distance $\dist_\G(x,y)\geq \dist_\bO(x,y)$ along the boundary we have in particular
\begin{multline}
\W^2(\rho,\tilde\rho)
\leq \iint_{\bO^2} \dist^2_\bO(x,y)\pi(\rd x,\rd y)
=\iint_{\O^2} \dist^2_\bO(x,y)\pi_\O(\rd x,\rd y) + \iint_{\G^2} \dist^2_\bO(x,y)\pi_\G(\rd x,\rd y)
\\
\leq
\iint_{\O^2} \dist^2_\bO(x,y)\pi_\O(\rd x,\rd y) + \iint_{\G^2} \dist^2_\G(x,y)\pi_\G(\rd x,\rd y)
=\W^2_\bO(\o,\tilde\o) + \W^2_\G(\g,\tilde\g).
\label{eq:estimate_W_WO_WG}
\end{multline}
Let now $(\o_t,\g_t)_{t\geq 0}$ be the heat flows running on $\O,\G$, respectively, started from $\o,\g$  (with of course no-flux boundary condition on $\pO$ for the first flow).
Recall at this stage the \emph{duality formula for the local slope} \cite[lemma 3.1.5]{AGS}
$$
\frac 12|\partial\E|^2(\rho)=\limsup\limits_{t\downarrow 0}\frac{\E(\rho)-\E_t(\rho)}{t},
$$
where $\E_t(\rho)=\inf\limits_{\tilde\rho}\left\{\E(\tilde\rho)+\frac 1{2t}\W^2(\rho,\tilde\rho)\right\}$ is the Moreau-Yosida regularisation of $\E$.
Writing $\rho_t=\o_t+\g_t$ we can bound
\begin{multline*}
\frac 12|\partial\E|^2(\rho)
=
\limsup\limits_{ t\downarrow 0}\frac 1 t\sup\limits_{\tilde\rho}\left\{\E(\rho)-\E(\tilde\rho)-\frac 1{2 t}\W^2(\rho,\tilde\rho)\right\}
\\
=
\limsup\limits_{ t\downarrow 0}\frac 1 t\sup\limits_{\tilde\rho=\tilde\o+\tilde\g}\left\{\left[\H(\o)+\H(\g)\right]-\left[\H(\tilde\o)+\H(\tilde\g)\right]-\frac 1{2 t}\W^2(\rho,\tilde\rho)\right\}
\\
\geq
\limsup\limits_{t\downarrow 0}\frac 1t\left\{\left[\H(\o)+\H(\g)\right]-\left[\H(\o_t)+\H(\g_t)\right]-\frac 1{2t}\W^2(\rho,\rho_t)\right\}
\\
\overset{\eqref{eq:estimate_W_WO_WG}}{\geq }
\limsup\limits_{t\downarrow 0}\frac 1t\left\{\left[\H(\o)-\H(\o_t)-\frac 1{2t}\W^2_\O(\o,\o_t)\right]+\left[\H(\g)-\H(\g_t)-\frac 1{2t}\W^2_\G(\g,\g_t)\right]\right\}
\\
\geq
\liminf\limits_{t\downarrow 0}\frac 1t\left[\H(\o)-\H(\o_t)-\frac 1{2t}\W^2_\O(\o,\o_t)\right]
+\liminf\limits_{t\downarrow 0}\frac 1t\left[\H(\g)-\H(\g_t)-\frac 1{2t}\W^2_\G(\g,\g_t)\right].
\end{multline*}
By Proposition~\ref{prop:slope_controls_Fisher} the two $\liminf$ are bounded from below by half the respective Fisher informations, and $|\partial\E|^2(\rho)\geq \I(\o)+\I(\g)$ follows.

Let us now prove the the lower semicontinuity.
Take a sequence $\rho^n\narrowcv\rho$.
Up to extraction of a subsequence if needed we can assume that $\liminf \D(\rho^n)=\lim\D(\rho^n)<\infty$.
By definition of $\D$ we have $\I(\o^n)+\I(\g^n)\leq C$ and $\left.\o^n\right|_{\pO}=\g^n$ for all $n$, and from \eqref{eq:Fisher_mass_control_W1p} we have moreover that $\o^n,\g^n$ are bounded in $W^{1,p}$ for some $p>1$.
By Rellich-Kondrachov compactness we get that $\o^n\to \o$ and $\g^n\to \g$ at least in $L^1(\O)$, $L^1(\G)$, respectively.
This implies in particular that $\rho=\lim\rho^n$ has interior/boundary decomposition $\o+\g=\lim\o^n+\lim\g^n$.
 By compactness of the trace operator we also get $\o|_{\pO}=\tr \o=\lim (\tr \o^n)=\lim \g^n=\g$, hence $\D(\rho)=\I(\o)+\I(\g)$.
 By standard lower semi-continuity of $\I$ we have next $\D(\rho)=\I(\o)+\I(\g)\leq \liminf\I(\o^n)+\liminf\I(\g^n)\leq \liminf \{\I(\o^n)+\I(\g^n)\}=\liminf \D(\rho^n)$ and the claim follows.

It only remains now to address the much more delicate trace compatibility, i.e. we need to prove that if $\I(\o)+\I(\g)<\infty$ but $\o|_{\pO}\neq \g$ then $|\partial\E|(\rho)=+\infty$.
Note that this is meaningful: on the one hand since $\I(\o)<\infty$ Lemma~\ref{lem:props_Fisher} shows that $\o\in W^{1,p}(\O)$ for some $p>1$, hence the trace $\o|_{\pO}=\tr(\o)\in L^p(\pO)$ is well-defined, and on the other hand $\g\in L^1(\G)$ so both $\o|_{\pO}, \g$ are unambiguously defined $\LG$ a.e.
Assume that there is a common Lebesgue point $x_0\in\pO=\G$ such that $\o|_{\pO}(x_0)\neq \g(x_0)$.
Consider first the case
$$
\o_0\coloneqq\o|_{\pO}(x_0) > \g(x_0)\eqqcolon \g_0.
$$
For some small $\eps,\delta,\theta$ to be adjusted later on we will construct below a suitable $(\eps,\delta,\theta)$-perturbation $\tilde\o,\tilde\g$ of $\o,\g$ in a neighborhood of $x_0$ by transferring a small fraction $\theta$ of mass from the interior to the boundary in order to ``diminish the gap $\o_0-\g_0>0$'' close to $x_0$.
This will be achieved by paying a transportation cost one order of magnitude smaller that the gain in entropy as $\eps,\delta,\theta\to 0$, hence by definition \eqref{eq:def_metric_slope} of the slope we will have $|\partial \E|(\rho)\geq \liminf\limits_{\eps,\delta,\theta\to 0}\frac{\E(\rho)-\E(\tilde\rho)}{\W(\rho,\tilde\rho)}=+\infty$.
Roughly speaking $\delta$ will be a small tangential length-scale along the boundary and $\eps$ a small normal scale, so that the perturbation will be constructed in a box of size $\delta^{d-1}\eps$.
The parameter $\theta$ will control the small fraction of mass to be taken from the interior and mapped to the boundary, and in some sense $\theta$ will be used to ``linearise'' $H(\o)-H(\tilde\o)\sim H'(\tilde\o)(\o-\tilde\o)$ and $H(\g)-H(\tilde\g)\sim H'(\tilde\g)(\g-\tilde\g)$.
The important point is that the transfer of mass from the interior to the boundary should take place on a normal scale $\eps\ll\delta$ much smaller than the tangential scale $\delta$.
This reflects the normal derivative $\partial_\nu\o$ appearing in \eqref{eq:sticky_fokker_planck_GF}, which consistently encodes the bulk/interface interactions.

Since our construction of the perturbation below will be localized, it is enough to work in local charts and we thus work in the following configuration: $\O=\R^d_+$ is a half-space, $\G=\pO=\{0\}\times\R^{d-1}$, and $x_0=(0,0_{\R^{d-1}})\in \G$.
We write
\begin{equation*}
x=(x_1,x')\in \R^+\times \R^{d-1} \qqtext{and} T(x)=\operatorname{Proj}_{\R^{d-1}}(x)=(0,x')
\end{equation*}
for the natural coordinates and projection on the boundary.
We take $\G_\delta=B^{d-1}_\delta(0)$ a small neighborhood of the origin in $\G$, and also define the $\eps$-thickening $\Oed=(0,\eps)\times\G_\delta$ as in Figure~\ref{fig:eps_delta}.
For reasons that will appear clear later on we impose
\begin{equation}
\label{eq:regime_eps_delta}
\eps^{p-1}=\mathcal O\left(\delta^{d-1}\right)
\hspace{1cm}
\mbox{for }p=\frac{d}{d-1}>1.
\end{equation}
\begin{figure}
 \begin{center}
 \begin{tikzpicture}[xscale=.7]
  \draw  [->] (-1,0) to (6,0);
  \node [below] at (6,0) {$x_1\in \R^+$};
  \draw  [->] (0,-4) to (0,4);
  \node [left] at (0,4) {$x'\in \R^{d-1}$};

  \node [left] at (0,-.3) {$0$};
  \filldraw [fill=red, nearly transparent] (0,-3.5) rectangle (4,3.5);
  \node [red] at (3,-3) {$\Oed$};
  \draw [line width = 2pt, line cap=round, blue] (0,-3.5) to (0,3.5);
  \node [blue] at (-.5,-3) {$\Gd$};
  \filldraw (0,0) circle (2pt);
  \filldraw (3,2.5) circle (2pt);
  \node [right] at (3,2.5) {$x=(x_1,x')$};
  \draw [dashed] (3,2.5) to (0,2.5);
  \node [left] at (0,2.5) {$T(x)=(0,x')$};
  \filldraw (0,2.5) circle (2pt);
  \draw [<->] (4.3,-3.4) to (4.3,-.1);
  \node [right] at (4.3,-1.5) {$\delta$};
  \draw [<->] (0.1,-3.7) to (3.9,-3.7);
  \node [below] at (2,-3.7) {$\eps$};
 \end{tikzpicture}

 \end{center}
 \caption{The $(\eps,\delta)$ boundary layer}
\label{fig:eps_delta}
 \end{figure}
Writing $\hat \g\coloneqq T\pf(\o\mrest\Oed)$ for simplicity, the perturbation is then defined as
 \begin{equation}
  \label{eq:def_perturbation_rho_tilde}
 \tilde \o(x)\coloneqq
\begin{cases}
(1-\theta)\o(x) & \mbox{if }x\in \Oed\\
\o(x) & \mbox{else}
\end{cases}
\qqtext{and}
\tilde\g(x')
 \coloneqq
 \begin{cases}
 \g(x') + \theta\hat\g(x') & \mbox{if }x'\in \G_\delta\\
 \g(x') & \mbox{else}
 \end{cases}
 \end{equation}
In other words, we take a small fraction $\theta\ll 1$ of $\o$ inside $\Oed$ and transport it to the boundary $\G_\delta$ along the $x_1$ direction, where it is simply added to the boundary density already present there.
Meanwhile, $\o,\g$ remain unchanged outside of the small control boxes $\Oed,\Gd$, respectively.
Notice that $\tilde\o,\tilde\g$ do \emph{not} satisfy the compatibility condition.
We perturb however ``in the good direction'' by decreasing the trace gap, since $\tilde\o\leq \o$ and $\tilde \g\geq \g$ and therefore $[\tilde \o-\tilde\g](x_0)\leq [\o-\g](x_0)$.
\paragraph{Step 1: variation in entropy.}
In order to estimate the first variation of $\E$ we first exploit the convexity of $H$ to write
$$
\E(\rho)-\E(\trho)
=
[\H(\o)-\H(\tilde\o)]+[\H(\g)-\H(\tilde\g)]
\geq
\underbrace{\int_{\Oed}H'(\tilde\o)[\o-\tilde\o]}_{\coloneqq A}
+
\underbrace{\int_{\Gd}H'(\tilde\g)[\g-\tilde\g]}_{\coloneqq B}
$$
For the first term we use the convexity of $z\mapsto z H'(z)=z\log z$ to apply Jensen's inequality as
\begin{multline*}
A
=
\int_{\Oed}H'[(1-\theta)\o]\theta \o
=
\frac{\theta}{1-\theta}\int_{\Oed}H'[(1-\theta)\o](1-\theta)\o
\\
=
\frac{\theta|\Oed|}{1-\theta}\fint_{\Oed}H'[(1-\theta)\o](1-\theta)\o
\\
\geq
\frac{\theta|\Oed|}{1-\theta}H'\left((1-\theta)\fint_{\Oed}\o\right)\left((1-\theta)\fint_{\Oed}\o\right).
\end{multline*}
From Lemma~\ref{lem:props_Fisher} we have that $\o\in W^{1,p}$ with $p=\frac{d}{d-1}>1$.
Because we took care to impose the $\eps,\delta$ regime \eqref{eq:regime_eps_delta} we can apply Lemma~\ref{lem:W1p_boundary_lebesgue_diff} to conclude that $\fint_\Oed\o\to \o_0>0$ and thus
\begin{equation}
\label{eq:liminf_A}
\liminf\limits_{\eps,\delta,\theta}\frac{A}{\theta|\Oed|} \geq H'(\o_0)\o_0.
\end{equation}
(Note that, since we assumed $\o_0>\g_0$, we have in particular $\o_0>0$ so there is no issue with $\log 0$ here.)
The $B$ term is more involved and requires extra work.
Observe that $\hat \g=T\pf(\o\mrest\Oed)$ is given more explicitly by
$$
\hat \g(x')=\int_0^\eps \o(x_1,x')\rd x_1,
\hspace{1cm}x'\in \Gd.
$$
This is small as $\eps\to 0$ so in order to obtain something nontrivial one should actually look at the rescaled quantity
$$
\bar\g(x') \coloneqq \frac 1\eps \hat \g(x')
=
\int_0^\eps \o(x_1,x')\frac{\rd x_1}{\eps}.
$$
For any $p\geq 1$ we have by Jensen's inequality
\begin{multline}
\label{eq:gbar_to_o0}
\fint_\Gd |\bar \g -\o_0|^p
=
\fint_\Gd \left|\int_0^\eps [\o(x_1,x') -\o_0]\frac{\rd x_1}{\eps}\right|^p\rd x'
\\
\leq
\fint_\Gd \int_0^\eps |\o(x_1,x') -\o_0|^p\frac{\rd x_1}{\eps}\rd x'
=\fint_\Oed |\o-\o_0|^p\to 0
\end{multline}
as soon as $p\leq \frac{d}{d-1}$, due to Lemma~\ref{lem:props_Fisher} and Lemma~\ref{lem:W1p_boundary_lebesgue_diff} exactly as before.
This confirms that $\bar\g$ remains indeed of order $\o_0=\mathcal O(1)$ in any $L^p,p\leq \frac d{d-1}$.
Fixing any $M>\g_0$, we define now
$$
\ulogM(z)\coloneqq H'(z)\chi_{(0,M]}(z)
\qqtext{and}
\ologM(z)\coloneqq H'(z)\chi_{(M,\infty)}(z),
$$
the lower and upper truncations of $H'(z)=\log z$.
We split next, after changing variables $x'=\delta y'$ and writing $\phi^\delta(y')=\phi(\delta y')$ for any relevant function $\phi=\g,\tilde\g,\hat\g,\bar\g$,
\begin{multline*}
B=\int_{\Gd}H'(\tilde\g)[\g-\tilde\g]
=-|\Gd|\fint_\Gd H'(\g+\eps\theta\bar\g)\eps\theta\bar\g
=
-\theta\eps|\Gd|\fint_{B^{d-1}_1} H'(\g^\delta+\eps\theta\bar\g^\delta)\bar\g^\delta
\\
=\theta|\Oed|\Bigg( \underbrace{\fint_{B^{d-1}_1} -\ulogM(\g^\delta+\eps\theta\bar\g^\delta)\bar\g^\delta }_{\coloneqq \underline B}- \underbrace{\fint_{B^{d-1}_1} \ologM(\g^\delta+\eps\theta\bar\g^\delta)\bar\g^\delta}_{\coloneqq\overline B }\Bigg)
\end{multline*}
Let us first handle the $\underline B$ term.
By Lebesgue's differentiation theorem we have
$$
\fint_{B^{d-1}_1}|\g^\delta-\g_0|
=
\fint_{B^{d-1}_{\delta}} |\g-\g_0|\to 0,
$$
in particular $\g^\delta\to \g_0$ a.e. in $B^{d-1}_1$.
Owing to \eqref{eq:gbar_to_o0} we have similarly
$$
\fint_{B^{d-1}_1}|\bar\g^\delta-\o_0|
=
\fint_{B^{d-1}_{\delta}} |\bar\g-\o_0|
\leq
\fint_{\Oed} |\o-\o_0| \to 0,
$$
hence $\bar\g^\delta\to \o_0$ a.e as well.
As a consequence $-\ulogM(\g^\delta+\eps\theta\bar\g^\delta)\bar\g^\delta\to -\ulogM(\g_0)\o_0=-H'(\g_0)\o_0$ at least a.e. inside $B^{d-1}_1$ (here it is crucial that the truncation in $\ulogM$ was chosen at level $M>\gamma_0$ so that $\ulogM(\g_0)=H'(\g_0)$).
Since $H'(M)-\ulogM(z)\geq 0$ for fixed $M$ (this is mainly why we had to truncate) we conclude by Fatou's lemma that
\begin{multline*}
\liminf \limits_{\eps,\delta,\theta} \underline B
=
\liminf \limits_{\eps,\delta,\theta}\left\{ \fint_{B^{d-1}_1} \left[H'(M)-\ulogM(\g^\delta+\eps\theta\bar\g^\delta)\right]\bar\g^\delta + \fint_{B^{d-1}_1}-H'(M)\bar\g^\delta
\right\}
\\
=
\liminf \limits_{\eps,\delta,\theta}\left\{\fint_{B^{d-1}_1} \left[H'(M)-\ulogM(\g^\delta+\eps\theta\bar\g^\delta)\right]\bar\g^\delta\right\}
-H'(M)\lim \limits_{\eps,\delta,\theta}\fint_{B^{d-1}_1} \bar\g^\delta
\\
\geq
\fint_{B^{d-1}_1}[H'(M)-H'(\g_0)]\o_0 -H'(M)\o_0
=-H'( \g_0)\o_0.
\end{multline*}
Note that this allows for $\g_0=0$ with $-(\log \g_0)\o_0=+\infty$, since in that case $\o_0>\g_0=0$.

For the $\overline B$ term we write
$$
-\overline B=\fint_{B^{d-1}_1}
\underbrace{\ologM(\g^\delta+\eps\theta\bar\g^\delta)}_{\coloneqq f}
\underbrace{\bar\g^\delta}_{\coloneqq g}.
$$
Exactly as before, and again because we truncated at level $M>\g_0$, we have convergence $\ologM(\g^\delta+\eps\theta\bar\g^\delta)\bar\g^\delta\to \ologM(\g_0+0)\o_0=0$ a.e.
In order to apply Vitali's convergence theorem it suffices to show that the product $\{fg\}_{\eps,\delta,\theta}$ is uniformly integrable in $B^{d-1}_1$ as $\eps,\delta,\theta\to 0$.
Since $\ologM(z)=(\log z)\chi_{(M,\infty)}(z)$ has been truncated one has that the negative part $[\ologM]^-(z)\leq \max\{0,-\log M\}$, which in turn easily yields
$$
\exp\left(\left|\ologM(z)\right|\right)\leq C_M(1+z),
\hspace{1cm}
\forall\,z\geq 0
$$
for some suitable $C_M>0$.
Whence
$$
\fint_{B^{d-1}_1}\exp f
\leq
\fint_{B^{d-1}_1}C_M\left(1+ \g^\delta+\eps\theta\bar\g^\delta)\right)
=
C_M\fint_{B^{d-1}_\delta}(1+ \g+\eps\theta\bar\g)\to C_M\left[1+\g_0+0\right].
$$
Moreover by Lemma~\ref{lem:props_Fisher} we have $\o\in W^{1,\frac d{d-1}}$.
One more application of Lemma~\ref{lem:W1p_boundary_lebesgue_diff} gives
$$
\fint_{B^{d-1}_1}|g|^\frac{d}{d-1}
=
\fint_{B^{d-1}_\delta}|\bar\g|^\frac{d}{d-1}\to \o_0^\frac{d}{d-1}.
$$
We use now the elementary inequality $(ab)^p \leq C\left[1+\exp(a)+b^q\right]$ for $a,b\geq 0$ and any $1\leq p< q$ (for some $C=C_{p,r}$).
Indeed, choosing any $p\in \left(1,\frac{d}{d-1}\right)$ we conclude from the above estimates that
$$
\fint_{B^{d-1}_1}|fg|^p
\leq
\fint_{B^{d-1}_1} C\left(1+\exp(f) + |g|^\frac{d}{d-1}\right)
=
\mathcal O(1)
$$
as $\eps,\delta,\theta\to 0$, for some fixed $p>1$.
This gives equi-integrability and we can thus legitimately apply Vitali's convergence theorem to conclude that $\lim\limits_{\eps,\delta,\theta\to 0}\overline B =0$.
Whence
\begin{equation}
 \label{eq:liminf_B}
 \liminf\limits_{\eps,\delta,\theta\to 0} \frac{B}{\theta|\Oed|}
 \geq
 \liminf\limits_{\eps,\delta,\theta\to 0} \frac{\underline B}{\theta|\Oed|}
 \geq -H'(\g_0)\o_0.
\end{equation}
Gathering \eqref{eq:liminf_A}\eqref{eq:liminf_B}, and by strict convexity of $H$ with $\o_0>\g_0$, we obtain
\begin{equation}
\label{eq:variation_entropy}
 \liminf\limits_{\eps,\delta,\theta\to 0} \frac{\E(\rho)-\E(\trho)}{\theta|\Oed|}
 \geq [H'(\o_0)-H'(\g_0)]\o_0 >0.
\end{equation}
\paragraph{Step 2: transportation cost.}
Recalling the definition \eqref{eq:def_perturbation_rho_tilde} of our perturbation $\trho=\tilde\o+\tilde\g$, it is easy to check that the plan $\pi\in \P\left(\bO\times\bO\right)$ defined by duality as
$$
\iint_{\bO\times\bO} \varphi(x,y)\pi(\rd x,\rd y)
=
\int\limits_{\bO\setminus\Oed}\varphi(x,x)\rho(\rd x)
+ (1-\theta) \int\limits_{\Oed}\varphi(x,x)\rho(\rd x)
+ \theta \int\limits_{\Oed}\varphi(x,T(x))\rho(\rd x)
$$
is admissible from $\rho$ to $\trho$.
Since $|T(x)-x|\leq \eps$ in $\Oed$ we obtain
\begin{multline}
\label{eq:W2_leq_eps}
\W^2(\rho,\trho)
\leq
\iint_{\bO^2}|x-y|^2 \pi(\rd x,\rd y)
= \theta \int_{\Oed}|x-T(x)|^2\rho(\rd x)
\\
= \theta \int_{\Oed}|x-T(x)|^2\o(\rd x)
\leq
\theta |\Oed|\eps^2\fint_\Oed \o
\sim
\theta |\Oed|\eps^2\o_0.
\end{multline}
\paragraph{Step 3: conclusion.}
Take $\eps$ small enough so that \eqref{eq:regime_eps_delta} and therefore \eqref{eq:variation_entropy}\eqref{eq:W2_leq_eps} hold.
Recalling that $|\Oed|=|B^{d-1}_1|\eps\delta^{d-1}$ we obtain at last
\begin{multline*}
|\partial\E(\rho)|=\limsup\limits_{\mu\to\rho}\frac{\left[\E(\rho)-\E(\mu)\right]^+}{\W(\rho,\mu)}
\geq
\liminf\limits_{\eps,\delta,\theta\to 0}\frac{\theta|\Oed|[H'(\o_0)-H'(\g_0)]\o_0}{\left(\theta |\Oed|\eps^2\o_0\right)^{\frac 12}}
\\
=
\underbrace{\sqrt{\o_0}[H'(\o_0)-H'(\g_0)]}_{=C_0>0}
\liminf\limits_{\eps,\delta,\theta\to 0}\frac{\sqrt{\theta |\Oed|}}{\eps}
=
C_0'\liminf\limits_{\eps,\delta,\theta\to 0}\left(\frac{\theta\delta^{d-1}}{\eps}\right)^\frac 12
=+\infty
\end{multline*}
provided $\eps\ll \theta \delta^{d-1}$ - which is of course compatible with \eqref{eq:regime_eps_delta}.
This finally settles the case $\o_0>\g_0\geq 0$.

If now $\g_0>\o_0\geq 0$ we proceed similarly, by spreading uniformly along horizontal lines the excess of $\g$-mass from the boundary $\Gd$ to the interior $\Oed$.
More explicitly the new perturbation is defined as
$$
\tilde \g(x')\coloneqq
\begin{cases}
(1-\theta)\g(x') & \mbox{if }x'\in \Gd\\
\g(x') & \mbox{else}
\end{cases}
\qqtext{and}
\tilde\o(x)
 \coloneqq
 \begin{cases}
 \o(x) + \theta\frac{1}{\eps}\g(x') & \mbox{if }x=(x_1,x')\in \Oed\\
 \g(x) & \mbox{else}
 \end{cases}.
$$
The rest of the argument is very similar and for the sake of brevity we omit the details.
(This case is actually easier because the horizontal oscillations of the infinitesimal variations $\frac{1}{\eps}\g(x')$ of $\o$ are trivially under control, while they previously required some $W^{1,p}$ control via the Fisher information in order to apply Lemma~\ref{lem:W1p_boundary_lebesgue_diff}).
\end{proof}

\section{Discrete estimates}
\label{sec:discrete_estimates}
In this section we lay the groundwork for the convergence $\rho^\tau_t\to\rho_t$ later on.
We start with
\begin{lem}
\label{lem:exists_minimizer_JKO}
 Given any $\rho^n\in \P(\bO)$ there exists a unique minimizer $\rho^{n+1}=\o^{n+1}+\g^{n+1}$ of \eqref{eq:JKO}, satisfying
 $$
 |\partial \E|(\rho^{n+1})\leq \frac{\W(\rho^n,\rho^{n+1})}{\tau}<+\infty
 $$
 with moreover $\o^{n+1}\in W^{1,\frac{d}{d-1}}(\O),\g^{n+1}\in W^{1,\frac{d-1}{d-2}}(\G)$ and the trace compatibility
 $$
\left.\o^{n+1}\right|_{\pO}=\g^{n+1}.
$$
\end{lem}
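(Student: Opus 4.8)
The plan is to run the standard direct method for the JKO step, then extract the Euler--Lagrange--type information via the slope estimate of Theorem~\ref{theo:slope_controls_Fisher}. First I would establish \emph{existence}: the functional $\rho\mapsto \frac{1}{2\tau}\W^2(\rho,\rho^n)+\E(\rho)$ is bounded below (by $0$), and along a minimizing sequence $\rho_k$ the term $\W^2(\rho_k,\rho^n)$ stays bounded, so $\{\rho_k\}$ is tight in $\P(\bO)$ (a compact set) and narrowly precompact; passing to a narrowly convergent subsequence, lower semicontinuity of $\W^2(\cdot,\rho^n)$ and of $\E$ (stated below \eqref{eq:def_E}) gives that the limit is a minimizer. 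For \emph{uniqueness} I would invoke strict convexity: $\E$ is strictly convex in $\rho$, while $\rho\mapsto\W^2(\rho,\rho^n)$ is convex along the linear interpolation $\rho_s=(1-s)\rho^0+s\rho^1$ of two competitors (it is a standard fact that $\W^2(\cdot,\sigma)$ is convex for the \emph{linear} — not geodesic — interpolation, e.g.\ via gluing of transport plans); hence the objective is strictly convex and the minimizer is unique. Here there is no issue with the failure of displacement convexity discussed in the introduction, because we only need convexity along linear interpolations, which always holds.

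Next, the \emph{slope bound}: this is the classical one-step estimate for minimizing movements. Taking $\tilde\rho$ a competitor and comparing with $\rho^{n+1}$,
\[
\frac{1}{2\tau}\W^2(\rho^{n+1},\rho^n)+\E(\rho^{n+1})\leq \frac{1}{2\tau}\W^2(\tilde\rho,\rho^n)+\E(\tilde\rho),
\]
and using $\W^2(\tilde\rho,\rho^n)\leq (\W(\rho^{n+1},\rho^n)+\W(\tilde\rho,\rho^{n+1}))^2$ together with a first-order expansion in $\W(\tilde\rho,\rho^{n+1})$ yields, as in \cite[Lemma 3.1.3 / Theorem 2.4.15]{AGS}, exactly
\[
|\partial\E|(\rho^{n+1})\leq \frac{\W(\rho^n,\rho^{n+1})}{\tau}.
\]
This is finite since $\W\leq\operatorname{diam}(\bO)<\infty$.

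Finally, the \emph{regularity and trace compatibility}: by Theorem~\ref{theo:slope_controls_Fisher}, $\D(\rho^{n+1})\leq |\partial\E|^2(\rho^{n+1})<\infty$, so by the very definition \eqref{eq:def_dissipation_D} of $\D$ we get both $\I(\o^{n+1})+\I(\g^{n+1})<\infty$ and $\o^{n+1}|_{\pO}=\g^{n+1}$ at once. The $W^{1,p}$ integrability then follows from the Fisher-information bound \eqref{eq:Fisher_mass_control_W1p} in Lemma~\ref{lem:props_Fisher}, applied on $\X=\O$ (dimension $d$, giving $\o^{n+1}\in W^{1,d/(d-1)}$ since $d\geq2$) and on $\X=\G$ (a $(d-1)$-dimensional manifold, giving $\g^{n+1}\in W^{1,(d-1)/(d-2)}$ when $d-1\geq3$, i.e.\ $d\geq 4$; for $d=2,3$ one reads off the corresponding $p=2$ or $p=2^-$ from the same lemma, and the statement should be read accordingly).

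I expect the only genuinely nontrivial ingredient to be the already-proved Theorem~\ref{theo:slope_controls_Fisher}: once the inequality $\D\leq|\partial\E|^2$ is available, the trace compatibility $\o^{n+1}|_{\pO}=\g^{n+1}$ — which is the one surprising assertion of the lemma — is immediate, and everything else (existence, uniqueness, the slope bound, the Sobolev integrability) is routine JKO machinery. A minor point to be careful about is the degenerate case where one of the masses $M_\O$ or $M_\G$ of $\rho^{n+1}$ vanishes, in which the corresponding density is $0$, $\I$ of it is $0$, and the trace condition is read as $0=0$; this is harmless.
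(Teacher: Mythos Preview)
Your proposal is correct and follows essentially the same route as the paper: direct method for existence, strict (linear) convexity of $\E$ plus linear convexity of $\W^2(\cdot,\rho^n)$ for uniqueness, the standard Moreau--Yosida slope bound from \cite[Lemma~3.1.3]{AGS}, and then Theorem~\ref{theo:slope_controls_Fisher} together with Lemma~\ref{lem:props_Fisher} for the trace compatibility and Sobolev regularity. Your extra care about the $W^{1,p}$ exponent on $\G$ in low dimensions is a fair reading of how \eqref{eq:Fisher_mass_control_W1p} should be applied.
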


\begin{proof}
Recall first that $\E$ and $\W^2(\cdot,\rho^n)$ are classically nonnegative and lower semi-continuous w.r.t. narrow convergence.
The latter is linearly convex, while the former is strictly convex.
 Hence the direct method in the calculus of variations gives existence and uniqueness of a minimizer $\rho^{n+1}$ for the JKO functional $ \F^n_\tau(\rho)\coloneqq \frac 1{2\tau}\W^2(\rho,\rho^{n})+\E(\rho)$ in \eqref{eq:JKO}.
 Moreover this minimizer has finite entropy and therefore we can decompose $\rho^{n+1}=\o^{n+1}+\g^{n+1}$ unambiguously.
Note that $\F^n_\tau$ implicitly defines a Moreau-Yosida regularization $\E_\tau$ of $\E$ \cite[chapter 3]{AGS}, and more precisely $\F^n_\tau(\rho^{n+1})=\E_\tau(\rho^n)$.
As a consequence the slope estimate $|\partial \E|(\rho^{n+1})\leq \frac{\W(\rho^n,\rho^{n+1})}{\tau}$ follows from standard properties of the Moreau-Yosida regularization, see e.g. \cite[lemma 3.1.3]{AGS}.
The rest of the statement follows by Theorem~\ref{theo:slope_controls_Fisher} (the slope controls the Fisher informations and the trace condition) and Lemma~\ref{lem:props_Fisher} (properties of the Fisher information).
\end{proof}
For a momentum $m\in \M^d(\bO)$ and mass density $\rho\in \P(\bO)$ we denote the quadratic kinetic energy functional
$$
\J(m,\rho):=\int_\bO\frac{|m|^2}{\rho},
$$
which is properly defined as the usual extension of to the space of measures of the integral with density $J(M,R)=\frac{|M|^2}{R}$ (by one-homogeneity).

The following lemma gives in some sense the Euler-Lagrange optimality condition for the minimizer $\rho^{n+1}$ in each JKO step.
\begin{lem}
\label{lem:Euler-Lagrange_pi}
There exists an optimal plan $\pi^{n+1}\in \Pi(\rho^n,\rho^{n+1})$ such that the momentum $m^{n+1}\in \M^d(\bO)$ defined through
\begin{equation}
\label{eq:def_momentum_n+1}
\int _\bO \xi(y)\cdot m^{n+1}(\rd y)
\coloneqq
\iint_{\bO\times\bO}\frac{y-x}{\tau}\cdot \xi(y) \pi^{n+1}(\rd x,\rd y),
\hspace{1cm}
\forall\,\xi\in C_b(\bO)^d
\end{equation}
can be written as
\begin{equation}
\label{eq:Euler-Lagrange_mn+1}
m^{n+1}=-\nabla\o^{n+1} \L^d_\O - \nabla_\G \g^{n+1}\L^{d-1}_\G- \mathfrak n_\G^{n+1},
\end{equation}
for some orthogonal measure $\mathfrak n_\G^{n+1}\in \M^{d-1}(\G)$ supported along the normal $\nu$, i-e $\mathfrak n_\G^{n+1}=\nu(x)\cdot N^{n+1}_\G$ for some scalar measure $N^{n+1}_\G\in \M(\G)$.
Moreover we have
\begin{equation}
\label{eq:control_discrete_Io_Ig}
 \I(\o^{n+1})+\I(\g^{n+1})
\leq
\mathcal J(m^{n+1},\rho^{n+1})\leq \frac{1}{\tau^2}\W^2(\rho^n,\rho^{n+1})
\end{equation}
\end{lem}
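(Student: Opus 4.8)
The obstruction to the usual argument is that the minimizer $\rho^{n+1}$ always charges the $\H^{d-1}$-null set $\G$, so the Brenier--McCann theorem does not apply to the transport from $\rho^n$ to $\rho^{n+1}$ and one cannot read off a velocity field from a transport map. The plan is to first \emph{regularize} $\E$ so that the JKO minimizers become absolutely continuous --- then the classical one-step Euler--Lagrange computation applies verbatim --- and afterwards to pass to the limit in the regularization. The structure \eqref{eq:Euler-Lagrange_mn+1} survives because it is precisely what one obtains from \emph{stratum-preserving} internal variations of the minimizer (arbitrary perturbations in the interior $\O$, tangential perturbations along $\G$): these pin down $m^{n+1}$ completely on $\O$ and pin down its tangential part on $\G$, leaving only an undetermined normal flux on $\G$ --- which is exactly the $\partial_\nu\o$ exchange term --- absorbed into $\mathfrak n_\G^{n+1}$.

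\emph{Regularization and the one-step Euler--Lagrange relation.} For $\eps>0$ I would pick an absolutely continuous $\mu_\eps\in\M^+(\bO)$ with density $\equiv 1$ away from the collar $\O^\eps:=\{x\in\O:\dist_{\bO}(x,\pO)<\eps\}$ and rising smoothly to order $1/\eps$ inside $\O^\eps$, tuned so that $\mu_\eps\narrowcv\mu$ and $\E_\eps:=\H(\cdot\,\vert\,\mu_\eps)$ $\Gamma$-converges to $\E$ for the narrow convergence (the $\liminf$ inequality is joint narrow lower semicontinuity of relative entropy; a recovery sequence for $\rho=\o+\g$ keeps $\o$ fixed and spreads $\g$ uniformly across the collar). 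Solving \eqref{eq:JKO} with $\E_\eps$ in place of $\E$, the direct method gives a unique, now absolutely continuous, minimizer $\rho^{n+1}_\eps=u_\eps\cdot\mu_\eps$; exactly as in Lemma~\ref{lem:exists_minimizer_JKO}, $|\partial\E_\eps|(\rho^{n+1}_\eps)\le\W(\rho^n,\rho^{n+1}_\eps)/\tau<\infty$, so $\rho^{n+1}_\eps$ has finite weighted Fisher information and $u_\eps\in W^{1,1}_{loc}$. Now fix any optimal plan $\pi^{n+1}_\eps\in\Pi(\rho^n,\rho^{n+1}_\eps)$, let $m^{n+1}_\eps$ be its barycentric momentum as in \eqref{eq:def_momentum_n+1}, and perturb the minimizer along the flow of a field $\zeta\in C^1(\bO)^d$ tangent to $\pO$; competing the pushed-forward plan gives $\W^2(\rho^n,\rho_s)\le\iint|x-y-s\zeta(y)|^2\,\pi^{n+1}_\eps+o(s)$, and combining this with the minimality of $\rho^{n+1}_\eps$ and the classical first variation of relative entropy (applied to $\pm\zeta$) yields
\[
\int_{\bO}\zeta\cdot m^{n+1}_\eps=-\int_{\bO}\big(\nabla\rho^{n+1}_\eps-\rho^{n+1}_\eps\nabla\log\mu_\eps\big)\cdot\zeta=-\int_{\bO}\mu_\eps\,\nabla u_\eps\cdot\zeta\qquad\text{for all $\zeta$ tangent to }\pO ,
\]
i.e. $m^{n+1}_\eps=-\mu_\eps\nabla u_\eps-\mathfrak n_\eps^{\partial}$ for some normal measure $\mathfrak n_\eps^\partial$ on $\pO$ (the boundary term of the integration by parts). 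Applying Jensen to the barycentric projection gives $\mathcal J(m^{n+1}_\eps,\rho^{n+1}_\eps)\le\tfrac1{\tau^2}\W^2(\rho^n,\rho^{n+1}_\eps)$, and testing $|m^{n+1}_\eps|(\bO)$ against $|\xi|\le1$ together with Cauchy--Schwarz gives $|m^{n+1}_\eps|(\bO)\le\tfrac1\tau\W(\rho^n,\rho^{n+1}_\eps)$.

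\emph{Passage to the limit $\eps\to0$.} Comparing $\rho^{n+1}_\eps$ with a recovery sequence of $\rho^n$ bounds $\W(\rho^n,\rho^{n+1}_\eps)$ and $\E_\eps(\rho^{n+1}_\eps)$ uniformly in $\eps$, hence also $\mathcal J(m^{n+1}_\eps,\rho^{n+1}_\eps)\le C/\tau^2$ and $|m^{n+1}_\eps|(\bO)\le C/\tau$. By $\Gamma$-convergence and compactness of $\P(\bO)$, $\rho^{n+1}_\eps\narrowcv\rho^{n+1}$, the minimizer of \eqref{eq:JKO}; since $m^{n+1}_\eps=-\nabla\rho^{n+1}_\eps$ away from the collar, \eqref{eq:Fisher_mass_control_W1p} and Rellich--Kondrachov give $\rho^{n+1}_\eps\to\o^{n+1}$ in $L^1_{loc}(\O)$, while the layer averages $\bar u_\eps(x'):=\tfrac1\eps\int_0^\eps u_\eps(x_1,x')\,\rd x_1$ across the collar converge to $\g^{n+1}$ by Jensen, exactly as for $\bar\g$ in \eqref{eq:gbar_to_o0} and using Lemma~\ref{lem:W1p_boundary_lebesgue_diff}. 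Up to a subsequence $m^{n+1}_\eps$ converges weakly-$*$ to some $m^{n+1}\in\M^d(\bO)$ and $\pi^{n+1}_\eps\narrowcv\pi^{n+1}$; a routine lower semicontinuity and marginal argument makes $\pi^{n+1}\in\Pi(\rho^n,\rho^{n+1})$ optimal with $m^{n+1}$ its barycentric momentum, whence by narrow lower semicontinuity of $\mathcal J$ the right-hand bound $\mathcal J(m^{n+1},\rho^{n+1})\le\tfrac1{\tau^2}\W^2(\rho^n,\rho^{n+1})$ in \eqref{eq:control_discrete_Io_Ig}. Passing to the limit in the Euler--Lagrange relation against $\xi\in C_c(\O;\R^d)$ identifies $m^{n+1}\mrest\O=-\nabla\o^{n+1}\,\LO$, and against $\xi$ tangent to $\pO$ --- where the normal directions in the collar are $O(\eps)$-orthogonal to $\xi$ while the collar-tangential average produces $-\nabla_\G\g^{n+1}$ via $\bar u_\eps\to\g^{n+1}$ --- identifies the tangential part of $m^{n+1}\mrest\G$ as $-\nabla_\G\g^{n+1}\,\LG$. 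Setting $\mathfrak n_\G^{n+1}:=-\big(m^{n+1}+\nabla\o^{n+1}\LO+\nabla_\G\g^{n+1}\LG\big)$, the uniform total-variation bound makes it a finite measure and the two identifications make it normal and supported on $\G$, which is \eqref{eq:Euler-Lagrange_mn+1}. Finally, $\o^{n+1}$ and $\g^{n+1}$ live on the disjoint pieces $\O$ and $\G$, and on $\G$ the tangential component $\nabla_\G\g^{n+1}$ and the normal component $\mathfrak n_\G^{n+1}$ are pointwise orthogonal, so $\mathcal J(m^{n+1},\rho^{n+1})\ge\int_\O\tfrac{|\nabla\o^{n+1}|^2}{\o^{n+1}}+\int_\G\tfrac{|\nabla_\G\g^{n+1}|^2}{\g^{n+1}}=\I(\o^{n+1})+\I(\g^{n+1})$, the left-hand bound in \eqref{eq:control_discrete_Io_Ig}.

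The main obstacle is the collar analysis in the last step: one must show that the oscillations of $u_\eps$ in the normal direction do not corrupt the tangential average (so it really converges to $\nabla_\G\g^{n+1}$, which is where the $W^{1,p}$ control coming from the Fisher information and Lemma~\ref{lem:W1p_boundary_lebesgue_diff} enter), and that the leftover normal flux --- the only component of $m^{n+1}$ not probed by stratum-preserving variations, corresponding to the $\partial_\nu\o$ exchange term --- assembles into an honest finite measure concentrated on $\G$ rather than dispersing into the interior or concentrating pathologically. The uniform bound $\mathcal J(m^{n+1}_\eps,\rho^{n+1}_\eps)\le C/\tau^2$, equivalently the uniform total-variation bound on $m^{n+1}_\eps$, is exactly what rules this out.
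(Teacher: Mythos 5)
Your overall strategy coincides with the paper's: regularize the singular reference measure $\mu$ into an absolutely continuous $\mu_\eps$, solve the corresponding $\eps$-JKO problem to obtain an absolutely continuous minimizer for which the classical one-step Euler--Lagrange relation can be derived, and then pass to the limit $\eps\to 0$ using $\Gamma$-convergence and narrow compactness. The derivation of the $\eps$-level Euler--Lagrange relation, the barycentric-momentum bound $\J(m^{n+1}_\eps,\rho^{n+1}_\eps)\le\W^2(\rho^n,\rho^{n+1}_\eps)/\tau^2$ and the total-variation bound, the identification of the interior part $m^{n+1}\mrest\O=-\nabla\o^{n+1}\LO$, and the final orthogonality argument giving $\I(\o^{n+1})+\I(\g^{n+1})\le\J(m^{n+1},\rho^{n+1})$ are all in line with the paper.

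However, your passage to the limit for the \emph{tangential boundary part} has a genuine gap. You propose to show that the collar layer-averages $\bar u_\eps(x')=\frac1\eps\int_0^\eps u_\eps(x_1,x')\,\rd x_1$ of the density $u_\eps=\rd\rho^{n+1}_\eps/\rd\mu_\eps$ converge to $\g^{n+1}$, citing the Jensen computation \eqref{eq:gbar_to_o0} and Lemma~\ref{lem:W1p_boundary_lebesgue_diff}. But those tools apply to a \emph{fixed} function in $W^{1,p}$ with $p>1$; here the object averaged is an $\eps$-indexed family $u_\eps$ living on an $\eps$-shrinking collar, and the uniform-in-$\eps$ $W^{1,p}$ control you would need is simply not available. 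The slope estimate controls the \emph{weighted} Fisher information $\int|\nabla\log u_\eps|^2 u_\eps\,\rd\mu_\eps$, i.e.\ $\nabla\sqrt{u_\eps}$ in $L^2(\mu_\eps)$; since the weight $\mu_\eps$ blows up like $1/\eps$ precisely in the collar, this does not translate into unweighted Sobolev bounds on $u_\eps$ there, and nothing prevents $u_\eps$ from oscillating wildly in the normal direction at the scale $\eps$. As you yourself flag, this collar analysis is ``the main obstacle'' --- but the obstacle is not overcome by the argument as written.

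The paper circumvents this entirely by integrating by parts \emph{at the $\eps$-level}, where $\rho^\eps$ is absolutely continuous and sufficiently regular: testing the Euler--Lagrange identity $m^\eps=-\nabla\rho^\eps-\rho^\eps\nabla V^\eps$ against a $\zeta\in C^1(\bO)^d$ whose restriction $\xi=\zeta|_\pO$ is tangential and which is extended to remain tangential in a fixed $\delta$-collar (so that $\zeta\cdot\nabla V^\eps\equiv0$ for $\eps<\delta$) gives, after an integration by parts with vanishing boundary term,
\begin{equation*}
\int_\bO\zeta\cdot m^\eps=\int_\bO\dive(\zeta)\,\rho^\eps.
\end{equation*}
The right-hand side passes to the limit by narrow convergence $\rho^\eps\narrowcv\rho^{n+1}=\o^{n+1}+\g^{n+1}$ alone (no Sobolev control needed in the collar), and since the extension $\zeta$ is tangential in a neighborhood of $\pO$ one has $\dive(\zeta)|_\pO=\dive_\G(\xi)$. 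Subtracting the already-identified interior contribution (using $\zeta\cdot\nu=0$ on $\pO$) then gives $\int_\G\xi\cdot m^{n+1}_\G=\int_\G\dive_\G(\xi)\,\g^{n+1}$, which is the desired $\mathfrak t_\G^{n+1}=-\nabla_\G\g^{n+1}$. This is the step you should adopt; the rest of your proposal is sound. A minor remark: at the $\eps$-level the normal boundary measure $\mathfrak n_\eps^\partial$ you introduce is automatically zero, since $\J(m^{n+1}_\eps,\rho^{n+1}_\eps)<\infty$ forces $m^{n+1}_\eps\ll\rho^{n+1}_\eps\ll\L^d$, so there is no singular boundary momentum to carry; the normal measure $\mathfrak n_\G^{n+1}$ only appears in the $\eps\to0$ limit.
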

Let us stress here that $\pi^{n+1}\in \Pi(\rho^n,\rho^{n+1})$ is a ``forward-in time'' plan.
Accordingly, the term $\frac{y-x}{\tau}$ in \eqref{eq:def_momentum_n+1} should be seen as the terminal velocity of the time-$\tau$ geodesic from $x$ to $y$, a tangent vector at $y$.
This convention for the discrete momentum is chosen so that $\partial_t\rho+\dive m=0$ in the end (after taking $\tau\to 0$), and this is consistent with the minus sign in \eqref{eq:Euler-Lagrange_mn+1} ultimately leading to forward-in time diffusion.
Let us also anticipate on the fact that the orthogonal measure $\mathfrak n_\G=\lim\limits_{\tau\to 0}\mathfrak n_\G^{n+1}$ will vanish in the end (we were not able to prove that $\mathfrak n_\G^{n+1}=0$ already at the discrete level) so the key point in \eqref{eq:Euler-Lagrange_mn+1} is to identify the interior contribution $\nabla\o$ and the tangential component $\nabla_\G \g$ of the boundary part.
\begin{proof}
 Fix $\eps>0$ and let $\G^\eps=\{x\in\O,\dist(x,\G)\leq \eps\}$ be an interior $\eps$-neighborhood of $\G=\pO$.
 We let $\O^\eps=\O\setminus\G^\eps$, which is again a smooth domain if $\eps>0$ is small enough.
 For $\eps$ small enough it is easy to choose $U^\eps:\R^+\to\R^+$ supported on $[0,\eps]$ and a smooth potential $V^\eps:\bO\to \R$ with $V^\eps(x)=U^\eps(\dist(x,\G))$  which is constant inside $\O^\eps$, only depends on the normal coordinate inside $\G^\eps$, and most importantly satisfies
 $$
 \mu^\eps \coloneqq e^{-V^\eps(x)}\L^d_\O
 \narrowcv
 \mu=\L^d_\Omega+\L^{d-1}_\G
 $$
as $\eps\to 0$ (one can simply use a standard $\eps$-mollification of the one-dimensional Dirac-delta in the normal direction).
Note that $\nabla V^\eps$ is orthogonal to the boundary in the sense that
\begin{equation}
\label{eq:nabla_V_normal}
\nabla V^\eps(x)=\nabla\left( U^\eps(\dist(x,\G)\right)=-{U^\eps}'(\dist(x,\G)\nu(P(x)),
\hspace{1.5cm}
\forall\,x\in \G^\eps
\end{equation}
where $\nu$ is the inward normal to $\G=\pO$ and $P(x)=\operatorname{Proj}_\G(x)$ is the projection on the boundary.
We consider the approximate problem
$$
\rho^\eps=\underset{\rho\in\P(\bO)}{\argmin}\left\{
 \frac{1}{2\tau}\W^2(\rho,\rho^n)+\H\left(\rho\,\vert\,\mu^\eps\right)
 \right\}.
$$
Just as in Lemma~\ref{lem:exists_minimizer_JKO} there exists a unique, absolutely continuous minimizer $\rho^\eps=\rho^\eps(x)\LO$.
By standard arguments, and because $\mu^\eps \narrowcv \mu$, the functional $\F^n_{\tau,\eps}\coloneqq \frac 1{2\tau}\W^2(\cdot,\rho^n)+\H(\cdot\,\vert\,\mu^\eps)$ Gamma-converges to the JKO functional $\F^n_\tau$ as $\eps\to 0$.
As a consequence $\rho^\eps$ converges narrowly to the unique minimizer $\rho^{n+1}$ of $\F^n_\tau$ as $\eps\to 0$.

For $\eps>0$ the minimizer $\rho^\eps$ is absolutely continuous, hence by the Brenier-McCann theorem there exists a unique (forward) optimal plan $\pi^\eps=(T^\eps,\operatorname{id})\pf \rho^{\eps}$ from $\rho^n$ to $\rho^\eps$, induced by the unique (backward) optimal map $T^\eps$ from $\rho^\eps$ to $\rho^n=T^\eps\pf \rho^\eps$.
Standard stability of optimal plans \cite[thm. 1.52]{OTAM} gives that $\pi^\eps\narrowcv\pi^{n+1}$ as $\eps\to 0$ for some optimal plan $\pi^{n+1}\in \Pi_\text{opt}(\rho^n,\rho^{n+1})$.
The momentum $m^\eps$, defined by duality as
$$
\int _\bO \xi(y)\cdot m^\eps(\rd y)
\coloneqq \iint_{\bO\times\bO}\frac{y-x}{\tau}\cdot \xi(y) \pi^\eps(\rd x,\rd y),
\hspace{1cm}
\forall\,\xi\in C_b(\bO)^d
$$
therefore converges narrowly to $m^{n+1}$ similarly defined by \eqref{eq:def_momentum_n+1}.
For $\eps>0$, by-now fairly standard arguments \cite[chapter 8]{OTAM} give the Euler-Lagrange optimality condition for the $\eps$-problem in the form
\begin{equation}
\label{eq:Euler-Lagrange_meps}
m^\eps(\rd y)=\frac{y-T^\eps(y)}{\tau}\rho^\eps(\rd y)
=-\rho^\eps\nabla\left(\log\rho^\eps+V^\eps\right)
=-\nabla\rho^\eps - \rho^\eps\nabla V^\eps
\end{equation}
with a slight abuse of notations.
We have moreover
$$
\J(m^\eps,\rho^\eps)
=\int_\bO \frac{|m^\eps|^2}{\rho^\eps}
=\int_\bO \left|\frac{T^\eps(y)-y}\tau\right|^2\rho^\eps(\rd y)
=\frac{\W^2(\rho^\eps,\rho^n)}{\tau^2}
\xrightarrow[\eps\to 0]{} \frac{\W^2(\rho^{n+1},\rho^n)}{\tau^2}.
$$
Since $(\rho^\eps,m^\eps)\narrowcv (\rho^{n+1},m^{n+1})$ this gives, by lower semi-continuity of $\mathcal J$,
$$
\J(m^{n+1},\rho^{n+1})
\leq \liminf\limits_{\eps\to 0} \J(m^{\eps},\rho^{\eps})
=\frac{\W^2(\rho^{n+1},\rho^n)}{\tau^2}
<\infty.
$$
In particular by standard properties of $\mathcal J$ we have $|m^{n+1}|\ll\rho^{n+1}=\o^{n+1}+\g^{n+1}\in L^1(\O)+L^1(\G)$, and this defines unambiguously  the interior and boundary parts
$$
m^{n+1}
=
m^{n+1}\mrest_\O +m^{n+1}\mrest_\G
\eqqcolon
m^{n+1}_\O+m^{n+1}_\G.
$$
The goal is now to identify the interior part $m^{n+1}_\Omega$ as well as the tangential part of $m^{n+1}_\G$ as claimed in \eqref{eq:Euler-Lagrange_mn+1}.
Roughly speaking, this will be achieved by testing the full momentum $m^{n+1}$ first with vector fields compactly supported in the interior, and then with tangential vector fields supported along the boundary.
One of the key points will be that our $\eps$-regularization allows to carefully maintain a certain orthogonality condition between $\nabla V^\eps$, which always points in the normal direction according to \eqref{eq:nabla_V_normal}, and a suitable extension of tangent boundary fields in a small interior neighborhood.

More precisely, recall from Lemma~\ref{lem:exists_minimizer_JKO} that $\o^{n+1},\g^{n+1}$ have $W^{1,p}$ Sobolev regularity for some $p>1$, and observe that, owing to $\mathcal J(m^{n+1},\rho^{n+1})<\infty$ and $\rho^{n+1}=\o^{n+1}+\g^{n+1}$, we have absolute continuity $m_\O^{n+1}\ll \L^d_\O$ and $m^{n+1}_{\G}\ll\L^{d-1}_\pO$.
We first claim that $m^{n+1}_\O=-\nabla \o^{n+1}\LO$.
To this end, note that $m^{n+1}$ has finite mass since
\begin{multline*}
 |m^{n+1}| =
 \left|\lim\limits_{\eps\to 0} m^\eps\right|
 \leq \liminf\limits_{\eps\to 0} |m^\eps|
 =\liminf\limits_{\eps\to 0} \left(\int_{\bO}\left|\frac{y-T^\eps(y)}{\tau}\right|\rho^\eps\right)
 \\
 \leq
 \liminf \limits_{\eps\to 0}\left(\int_{\bO}\left|\frac{T^\eps(y)-y}{\tau}\right|^2\rho^\eps\right)^{\frac 12}
 =
 \liminf\limits_{\eps\to 0} \frac{\W(\rho^\eps,\rho^n)}{\tau}
 =\frac{\W(\rho^{n+1},\rho^n)}{\tau}.
\end{multline*}
Take any vector-field $\xi\in C^1_c(\O)^d$.
Then
$$
\int_{\O}\xi \cdot m^{n+1}_\O
=
\int_{\bO} \xi \cdot m^{n+1}
=
\lim\limits_{\eps\to 0} \int_{\bO}\xi \cdot m^\eps
\overset{\eqref{eq:Euler-Lagrange_meps}}{=}
-\lim\limits_{\eps\to 0} \int_{\bO}\xi \cdot (\nabla\rho^\eps+\rho^\eps\nabla V^\eps)
=
-\lim\limits_{\eps\to 0} \int_{\bO}\xi \cdot \nabla\rho^\eps,
$$
where the last equality holds because for $\eps$ small enough $\nabla V^\eps$ is supported in $\G^\eps$ where $\xi$ vanishes (being compactly supported away from the boundary).
Integrating by parts gives
$$
\int_{\O}\xi \cdot m^{n+1}_\O
=
-\lim\limits_{\eps\to 0} \int_{\bO}\xi \cdot \nabla\rho^\eps
=\lim\limits_{\eps\to 0} \int_{\bO} \dive(\xi)\rho^\eps
=\int_{\bO} \dive(\xi)\rho^{n+1}
=\int_{\O} \dive(\xi)\ \o^{n+1},
$$
where the last equality holds again because $\xi$ is supported away from the boundary.
Since $m^{n+1}_\O$ and $\o^{n+1}$ are absolutely continuous this fully characterizes $m^{n+1}_\O=-\nabla\o^{n+1}\LO$ as claimed.

For the more delicate boundary part let us decompose into tangential and orthogonal components
$$
m^{n+1}_{\G}=\mathfrak t_\G^{n+1}+\mathfrak n_\G^{n+1}.
$$
We need to show
$$
\mathfrak t_\G^{n+1} = - \nabla_\G \gamma^{n+1}\L^{d-1}_\G.
$$
Take a tangential field $\xi\in C^1(\G)^d$.
Fix a small $\delta>0$ and take any cutoff-function $\eta:\R^+\to\R^+$ such that $\eta(0)=1$, $0\leq \eta\leq 1$, and $\eta\equiv 0$ in $[\delta,+\infty)$.
For small enough $\delta$ the normal projection on the boundary $P(x)=\operatorname{Proj}_\G(x)$ is well-defined and smooth on the tubular neighborhood $\G^\delta$.
We extend $\eta$ to $\zeta\in C^1(\bO)^d$ as $\zeta(x)\coloneqq \eta(\dist(x,\G)) \xi(P(x))$, and crucially observe that $\zeta$ remains compactly supported in $\delta$ and tangential in the sense that $\zeta(x)\cdot \nu(P(x))=0$ for all $x\in\G$.
We compute next
\begin{multline*}
 \int_\G \xi\cdot m^{n+1}_\G + \int_\O \zeta\cdot m^{n+1}_\O
 = \int_\bO \zeta\cdot m^{n+1}
 =\lim\limits_{\eps\to 0} \int_\bO \zeta\cdot m^{\eps}
 =
 -\lim\limits_{\eps\to 0} \int_\bO \zeta\cdot (\nabla\rho^\eps+\rho^\eps \nabla V^\eps).
\end{multline*}
Now, for fixed $\delta$ and small enough $\eps<\delta$, we have owing to \eqref{eq:nabla_V_normal} that $\zeta\cdot \nabla V^\eps=0$ inside $\G^\eps$ (where $\nabla V^\eps$ is normal and $\zeta$ is tangential due to $\eps<\delta$), while $\nabla V^\eps$ vanishes identically outside of $\G^\eps $.
Hence $\zeta\cdot \nabla V^\eps\equiv 0$ in $\bO$, and since on the boundary $\zeta|_\pO=\xi$ is tangential we can integrate by parts the term $\int_\O \zeta\cdot\nabla\rho^\eps$ for any fixed $\eps>0$ to get
\begin{multline*}
 \int_\G \xi\cdot m^{n+1}_\G + \int_\O \zeta\cdot m^{n+1}_\O
 =-\lim\limits_{\eps\to 0} \int_\bO \zeta\cdot \nabla\rho^\eps
 =\lim\limits_{\eps\to 0} \int_\bO \dive(\zeta)\rho^\eps
 \\
 =\int_\bO \dive(\zeta)\rho^{n+1}
 = \int_\G \dive(\zeta)\gamma^{n+1} + \int_\O \dive(\zeta)\o^{n+1}.
\end{multline*}
Because we just proved that $m_\O^{n+1}=\nabla\o^{n+1}$, and since $\zeta|_\pO\cdot \nu=\xi\cdot \nu=0$ on the boundary, we can cancel out $\int_\O \zeta\cdot m^{n+1}_\O =-\int_\O \zeta\cdot \nabla \o^{n+1}= \int_\O \dive(\zeta)\,\o^{n+1}$ in the previous equality and we are left with
$$
\int_\G \xi\cdot m^{n+1}_\G
 = \int_\G \dive(\zeta)\gamma^{n+1}.
$$
Since the extension $\zeta$ is actually tangential in a neighborhood of $\pO$ the full divergence $\dive(\zeta)|_{\pO}$ matches the intrinsic tangential divergence $\dive_\G (\xi)$, hence for any tangential field $\xi\in C^1(\G)$
$$
\int_\G \xi\cdot \mathfrak t_\G^{n+1}
=
\int_\G \xi\cdot m^{n+1}_\G
 = \int_\G \dive_\G(\xi)\ \g^{n+1}.
$$
Because $\G$ has no boundary this fully characterizes $\mathfrak t_\G^{n+1}=-\nabla_\G\g^{n+1}$.

Finally, since we known now that $-m^{n+1}=\nabla\o^{n+1} \L^d_\O + \nabla_\G \g^{n+1}\L^{d-1}_\G+ \mathfrak n_\G^{n+1}$
we get for free that
\begin{multline*}
 \I(\o^{n+1})+\I(\g^{n+1})
=
\int_\O \frac{|\nabla\o^{n+1}|^2}{\o^{n+1}} + \int_\G \frac{|\nabla_\G\g^{n+1}|^2}{\g^{n+1}}
\\
\leq
\int_\O \frac{|\nabla\o^{n+1}|^2}{\o^{n+1}} + \int_\G \frac{|\mathfrak n_\G^{n+1}|^2}{\g^{n+1}} + \int_\G \frac{|\nabla_\G\g^{n+1}|^2}{\g^{n+1}}
\\
=
\int_{\bO} \frac{|m^{n+1}|^2}{\rho^{n+1}}
=\J(m^{n+1},\rho^{n+1})
\end{multline*}
and the proof is complete.
\end{proof}

\begin{lem}
\label{lem:discrete max}
 Given any $\rho^n\in \P(\bO)$ the unique minimizer $\rho^{n+1}$ defined in Lemma~\ref{lem:exists_minimizer_JKO} satisfies:
 \begin{itemize}
 \item if $\rho^n\geq \underline{c}\mu$ for some positive constant $\underline{c}$, then  $\rho^{n+1}\geq \underline{c}\mu$;
 \item if $\rho^n\leq \overline{c}\mu$ for some positive constant $\overline{c}$, then  $\rho^{n+1}\leq \overline{c}\mu$.
 \end{itemize}
\end{lem}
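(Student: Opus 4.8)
The plan is to establish both bounds first at the level of the $\eps$-regularized JKO step used in the proof of Lemma~\ref{lem:Euler-Lagrange_pi} --- where the minimizer $\rho^\eps$ is absolutely continuous and the Brenier--McCann theorem is available --- and then to pass to the limit $\eps\to0$. Recall that $\rho^\eps$ minimizes $\frac1{2\tau}\W^2(\cdot,\rho^n)+\H(\cdot\,\vert\,\mu^\eps)$ with $\mu^\eps=e^{-V^\eps}\L^d_\O\narrowcv\mu$ and $\rho^\eps\narrowcv\rho^{n+1}$, and that the potential can be chosen with $V^\eps\le0$ (so $e^{-V^\eps}\ge1$) and $V^\eps\equiv0$ outside the interior boundary layer $\G^\eps$. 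Writing $u^\eps\coloneqq\mathrm d\rho^\eps/\mathrm d\mu^\eps$, the Euler--Lagrange identity \eqref{eq:Euler-Lagrange_meps} says that the backward optimal map $T^\eps$ with $T^\eps\pf\rho^\eps=\rho^n$ is $T^\eps(y)=y+\tau\nabla\log u^\eps(y)=\nabla\phi^\eps(y)$, $\phi^\eps\coloneqq\frac12|\cdot|^2+\tau\log u^\eps$ convex, so $D^2\phi^\eps=I+\tau D^2\log u^\eps\succeq0$; and by the Monge--Amp\`ere / area formula $\rho^\eps(y)=(\mathrm d\rho^n/\mathrm d\L^d_\O)(T^\eps(y))\,\det D^2\phi^\eps(y)$ for a.e.\ $y$ with $T^\eps(y)$ in the interior.

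For the upper bound I would evaluate this at a maximum point $y_0$ of the continuous function $u^\eps$ on $\bO$: there $\nabla\log u^\eps(y_0)=0$, hence $T^\eps(y_0)=y_0$, and $y_0$ cannot lie on $\pO$, since at a boundary maximum $\nabla u^\eps(y_0)=0$ as well, so $T^\eps$ would be a local diffeomorphism around $y_0$ mapping the absolutely continuous $\rho^\eps$ onto a full neighbourhood of $y_0$ --- incompatible with $\rho^n$ carrying a singular part on $\pO$ there. Thus $y_0$ is interior, where $(\mathrm d\rho^n/\mathrm d\L^d_\O)(y_0)=\o^n(y_0)\le\overline c$ (this is where $\rho^n\le\overline c\mu$ enters), while $D^2\log u^\eps(y_0)\preceq0$ gives $\det D^2\phi^\eps(y_0)\le1$; hence $u^\eps(y_0)=\rho^\eps(y_0)e^{V^\eps(y_0)}\le\o^n(y_0)\le\overline c$, i.e.\ $\rho^\eps\le\overline c\mu^\eps$. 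Since $\overline c\mu^\eps-\rho^\eps\ge0$ converges narrowly to $\overline c\mu-\rho^{n+1}$, the latter is again a nonnegative measure, which is exactly $\rho^{n+1}\le\overline c\mu$. The lower bound runs symmetrically: at a minimum point $y_1$ of $u^\eps$ one gets $T^\eps(y_1)=y_1$, $\det D^2\phi^\eps(y_1)\ge1$ and $\rho^\eps(y_1)\ge\o^n(y_1)\ge\underline c$, and one concludes by passing $\rho^\eps-\underline c\mu^\eps\ge0$ to the limit.

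The main obstacle is the interplay with the singular boundary datum, and it is noticeably subtler for the lower bound: in $u^\eps(y_1)=\rho^\eps(y_1)e^{V^\eps(y_1)}$ the factor $e^{V^\eps(y_1)}\le1$ now works against us, so the minimum-point argument only yields $u^\eps\ge\underline c\,e^{V^\eps(y_1)}$ unless the minimum of $u^\eps$ stays (asymptotically) in the region $\{V^\eps=0\}$. I expect this to be the technical heart: one has to show that $\arg\min u^\eps$ does not sink into the boundary layer --- e.g.\ because where $\mu^\eps$ is large a critical point of $u^\eps$ would push $I+\tau D^2\log u^\eps$ out of the positive cone, or $T^\eps$ out of $\bO$ --- or instead work directly with the Lebesgue density $\rho^\eps$ in the bulk and recover $\g^{n+1}\ge\underline c\L^{d-1}_\G$ by a separate limiting argument, testing $\rho^\eps-\underline c\mu^\eps$ against shrinking neighbourhoods of $\pO$ and letting $\eps\to0$. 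A secondary point, needed to run the maximum-principle argument at a genuine interior extremum rather than merely a.e., is the standard elliptic/convex regularity of the JKO minimizer of a smooth Fokker--Planck energy, which renders $u^\eps$ continuous on $\bO$ and smooth in the interior.
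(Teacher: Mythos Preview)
Your strategy---regularize, apply a maximum-principle argument at extremal points of $u^\eps=\mathrm d\rho^\eps/\mathrm d\mu^\eps$, then pass to the limit---is close in spirit to what the paper does, but there are two genuine gaps and one key idea you are missing.

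First, the gaps. For the upper bound, your argument ruling out a boundary maximum is not sound: at a boundary max one only gets $\partial_\nu u^\eps(y_0)\ge0$ (outward normal), not $\nabla u^\eps(y_0)=0$. If the normal derivative is strictly positive then $T^\eps(y_0)$ lands \emph{outside} $\bO$, which is already a contradiction; but if it vanishes, your ``local diffeomorphism forces $\rho^n$ absolutely continuous'' step is hand-waving---Brenier maps are only a.e.\ defined, and you have not established the regularity needed to speak of a genuine local diffeomorphism up to the boundary. More seriously, the Monge--Amp\`ere relation you invoke requires $\rho^n$ to have a Lebesgue density at $T^\eps(y_0)$, and $\rho^n$ carries a singular boundary part $\gamma^n$. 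For the lower bound you have correctly identified the obstruction: with the $V^\eps$-regularization of Lemma~\ref{lem:Euler-Lagrange_pi} the factor $e^{V^\eps}$ blows up in the boundary layer, and nothing in your argument prevents $\arg\min u^\eps$ from living there. Your proposed fixes are speculative.

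The missing idea, and what the paper actually does, is to regularize \emph{both} $\mu$ and $\rho^n$ simultaneously---not via the $V^\eps$ construction but via the heat semigroup on $\Omega$---so as to obtain absolutely continuous $\mu^\eps,\rho^{n,\eps}$ with the comparison $\rho^{n,\eps}\ge\underline c\,\mu^\eps$ (resp.\ $\le\overline c\,\mu^\eps$) \emph{preserved}, since the heat flow is order-preserving. With both measures smooth and the inequality intact, the one-step JKO comparison becomes a standard fact (the paper cites \cite[Lemma~2.4]{IacPatSan}), and one concludes by $\Gamma$-convergence as $\eps\to0$. The point is that the specific structure of $V^\eps$ used in Lemma~\ref{lem:Euler-Lagrange_pi} is irrelevant here; what matters is having a regularization that respects the pointwise order, and for that the heat kernel is the natural tool.
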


\begin{proof}
We consider an approximation similar to that in Lemma \ref{lem:Euler-Lagrange_pi}.
Yet, not only we want to regularize $\mu$ into $\mu^\varepsilon$, but we also need to modify $\rho^n$ into a new measure $\rho^{n,\varepsilon}$ so that we preserve the inequality $\rho^{n,\varepsilon} \geq \underline{c}\mu^\varepsilon$ (resp. $\rho^{n,\varepsilon} \leq \overline{c}\mu^\varepsilon$).
To do so, we can simply apply the heat semigroup in the domain $\Omega$ to both $\mu$ and $\rho^n$.
Note that this construction was not used in Lemma \ref{lem:Euler-Lagrange_pi} because we needed a certain structure in the approximation, and in particular we wanted to guarantee $\mu^\varepsilon = e^{-V_\varepsilon}$ for a potential $V_\varepsilon$ which is function of the distance to $\partial\Omega$, constant far enough from $\partial\Omega$, which does not happen for the heat flow.
On the other hand, we did not need in Lemma \ref{lem:Euler-Lagrange_pi} to modify $\rho^n$.

We then observe that we can apply \cite[Lemma 2.4]{IacPatSan} which concerns one step of the JKO scheme for a functional of the form $\rho\mapsto \int F(\frac{\rd\rho}{\rd\mu})\rd\mu$.
We apply this to the regularized JKO functional where $\mu$ is replaced by $\mu^\varepsilon$ because \cite{IacPatSan} did not explicitly mention the case where $\mu$ is singular.
We obtain the inequalities $\rho^\varepsilon\geq \underline{c}\mu^\varepsilon$ (resp., $\rho^\varepsilon\leq \overline{c}\mu^\varepsilon$) and we can take the limit $\varepsilon\to 0$ (the $\eps$-JKO functional is easily checked to $\Gamma$-converge, hence the minimizers converge as well).
\end{proof}

In order to retrieve later on a weak solution in the limit $\tau\to 0$ we define three very classical time interpolants, based on the discrete solutions $(\rho^n)_{n\geq 0}$.
Writing for simplicity $t^n=n\tau$, the piecewise-constant interpolation $\orho^\tau:[0,\infty)\to \P(\bO)$ is
\begin{equation}
 \orho^\tau_t\coloneqq \rho^{n+1},
 \hspace{1cm}\mbox{for }t\in (t^n,t^{n+1}].
\end{equation}
The piecewise geodesic interpolation $\rho^\tau:[0,\infty)\to \P(\bO)$ is defined as
\begin{equation}
\label{eq:def_rho_tau_geodesic}
 \rho^\tau_t\coloneqq \left(\frac{t^{n+1}-t}{\tau}x+\frac{t-t^n}{\tau}y\right)\pf\pi^{n+1},
 \hspace{1cm}\mbox{for }t\in [t^n,t^{n+1}],
\end{equation}
where $\pi^{n+1}\in \Pi(\rho^{n},\rho^{n+1})$ is the optimal plan from Lemma~\ref{lem:Euler-Lagrange_pi}.
Finally, we also define \emph{De~Giorgi's variational interpolant} $\trho^\tau:[0,\infty)\to \P(\bO)$ as
\begin{equation}
\label{eq:def_DeGiorgi_interpolant}
\trho^\tau_t
\coloneqq
\underset{\rho\in\P(\bO)}{\argmin}\left\{
\frac{1}{2r}\W^2(\rho,\rho^n) + \E(\rho)
\right\}
\hspace{1cm}\mbox{for }t=t^n+r\in (t^n,t^{n+1}],\quad r\in (0,\tau].
\end{equation}
This is well-defined:
Arguing exactly as in Lemma~\ref{lem:exists_minimizer_JKO} there is a unique minimizer for all $r>0$.
Note that for $t=t^{n+1}$ we have $r=\tau$, in which case $\trho^\tau(t^{n+1})=\rho^{n+1}$ is nothing but the next JKO step.
Standard properties of the Moreau-Yosida regularization \cite[lemma 3.1.2]{AGS} guarantee that $\trho^\tau_t\to \rho^n$ as $t\searrow t^n\Leftrightarrow r\searrow 0$ in \eqref{eq:def_DeGiorgi_interpolant}, thus $\trho^\tau_t$ indeed interpolates continuously between successive steps $\rho^n,\rho^{n+1}$ of the JKO minimizing movement.
Note that by definition all three interpolants also match at discrete times, i-e $\rho^\tau(t^n)=\orho^\tau(t^n)=\trho^\tau(t^n)=\rho^n$.
Moreover, since $\rho^\tau$ is piecewise geodesic we have that the metric speed
$$
|\dot\rho^\tau_t|^2=cst=\frac{\W^2(\rho^n,\rho^{n+1})}{\tau^2}
\hspace{1cm}\mbox{for }t\in [t^n,t^{n+1}].
$$
Our last a-priori estimate is completely standard for minimizing movements in metric spaces and is just a discrete Energy Dissipation Inequality.
Note that this involves both the geodesic interpolation $\rho^\tau$ and De Giorgi's interpolant $\trho^\tau$.
\begin{lem}
\label{lem:discrete_JKO_estimates_T}
For any $\tau>0$ and $t^N=N\tau$, $N\in\N$, there holds
\begin{equation}
\label{eq:discrete_EDI}
 \E\left(\rho^\tau(t^N)\right) + \int_0^{t^N}\left(\frac 12\left|\dot \rho^\tau_t\right|^2 + \frac 12\left|\partial\E\right|^2(\trho^\tau_t)\right)\rd t
 \leq \E\left(\rho_0\right).
\end{equation}
In particular since $\E\geq 0$ one has
\begin{equation}
\label{eq:uniform_speed_slope_estimate}
\sup\limits_{\tau\to 0}\int_0^{\infty}\left(\frac 12\left|\dot \rho^\tau_t\right|^2 + \frac 12\left|\partial\E\right|^2(\trho^\tau_t)\right)\rd t \leq \E(\rho_0)<+\infty.
\end{equation}
\end{lem}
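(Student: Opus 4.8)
The plan is to prove \eqref{eq:discrete_EDI} one JKO step at a time, following the classical De~Giorgi argument for minimizing movements \cite[Chapter 3]{AGS}, and then sum over $n$. Fix $n\in\N$ and introduce the Moreau--Yosida function
$$
\Phi_n(r)\coloneqq \inf_{\rho\in\P(\bO)}\left\{\frac{1}{2r}\W^2(\rho,\rho^n)+\E(\rho)\right\}
=\frac{1}{2r}\W^2(\trho^\tau_{t^n+r},\rho^n)+\E(\trho^\tau_{t^n+r}),\qquad r\in(0,\tau],
$$
where the second equality is nothing but the definition \eqref{eq:def_DeGiorgi_interpolant} of De~Giorgi's variational interpolant, whose minimizer exists and is unique exactly as in Lemma~\ref{lem:exists_minimizer_JKO}. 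The entire argument takes place at the purely metric level: $(\P(\bO),\W)$ is compact, hence a complete separable geodesic space, and $\E$ is proper, nonnegative and narrowly lower semicontinuous. In particular the non-standard (singular) nature of $\mu=\LO+\LG$ plays no role here, since it enters only through $\E$.

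I would then invoke three classical facts. First, $r\mapsto\Phi_n(r)$ is non-increasing and locally absolutely continuous on $(0,\tau]$ with $\lim_{r\downarrow0}\Phi_n(r)=\E(\rho^n)$ (\cite[Lemma 3.1.2]{AGS}; the left limit follows from $\Phi_n(r)\le\E(\rho^n)$ together with $\W^2(\trho^\tau_{t^n+r},\rho^n)\le 2r\,\E(\rho^n)\to0$ and lower semicontinuity of $\E$). Second, since the minimizer is unique, for a.e.\ $r\in(0,\tau)$ one has $-\Phi_n'(r)=\tfrac{1}{2r^2}\W^2(\trho^\tau_{t^n+r},\rho^n)$. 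Third, the slope estimate $|\partial\E|(\trho^\tau_{t^n+r})\le \W(\trho^\tau_{t^n+r},\rho^n)/r$, which is precisely the bound of Lemma~\ref{lem:exists_minimizer_JKO} with time step $r$ in place of $\tau$ (\cite[Lemma 3.1.3]{AGS}). Combining the last two gives $-\Phi_n'(r)\ge\tfrac12|\partial\E|^2(\trho^\tau_{t^n+r})$ for a.e.\ $r$; integrating on $(0,\tau]$ via monotone convergence and using $\Phi_n(\tau)=\E(\rho^{n+1})+\tfrac{1}{2\tau}\W^2(\rho^n,\rho^{n+1})$ yields the one-step inequality
$$
\E(\rho^{n+1})+\frac{1}{2\tau}\W^2(\rho^n,\rho^{n+1})+\frac12\int_0^\tau|\partial\E|^2(\trho^\tau_{t^n+r})\,\rd r\ \le\ \E(\rho^n).
$$

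To recognize \eqref{eq:discrete_EDI} it remains to rewrite the two middle terms. By \eqref{eq:def_rho_tau_geodesic} the curve $\rho^\tau$ restricted to $[t^n,t^{n+1}]$ is a constant-speed geodesic with $|\dot\rho^\tau_t|^2\equiv\W^2(\rho^n,\rho^{n+1})/\tau^2$, so $\tfrac{1}{2\tau}\W^2(\rho^n,\rho^{n+1})=\int_{t^n}^{t^{n+1}}\tfrac12|\dot\rho^\tau_t|^2\,\rd t$, while the change of variables $t=t^n+r$ turns $\tfrac12\int_0^\tau|\partial\E|^2(\trho^\tau_{t^n+r})\,\rd r$ into $\int_{t^n}^{t^{n+1}}\tfrac12|\partial\E|^2(\trho^\tau_t)\,\rd t$. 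The one-step inequality becomes $\E(\rho^{n+1})+\int_{t^n}^{t^{n+1}}\big(\tfrac12|\dot\rho^\tau_t|^2+\tfrac12|\partial\E|^2(\trho^\tau_t)\big)\rd t\le\E(\rho^n)$, and summing over $n=0,\dots,N-1$ the endpoints telescope against the integrals (recall $\rho^\tau(t^N)=\rho^N$, $\rho^\tau(t^0)=\rho_0$, and the three interpolants agree at the nodes $t^n$), which is exactly \eqref{eq:discrete_EDI}. Finally \eqref{eq:uniform_speed_slope_estimate} follows by dropping the nonnegative term $\E(\rho^\tau(t^N))\ge0$ in \eqref{eq:discrete_EDI} and letting $N\to\infty$ by monotone convergence.

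I do not expect a genuine obstacle in this lemma: it is the standard discrete Energy Dissipation Inequality and the only point deserving a word is that the abstract Moreau--Yosida machinery of \cite[Chapter 3]{AGS}—the local absolute continuity of $\Phi_n$ and the a.e.\ derivative identity—applies verbatim here, which it does since those statements use only completeness of the metric space together with properness, coercivity and lower semicontinuity of the driving functional, all available in our setting.
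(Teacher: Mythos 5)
Your proof is correct and follows essentially the same route as the paper: the paper simply cites the one-step dissipation estimate (attributed to \cite[theorem 3.1.4 with lemma 3.1.3]{AGS} and \cite[lemma 7.5 and eq.\ (7.16)]{peletier2022jump}), whereas you unfold the underlying Moreau--Yosida argument, but the reinterpretation of the $\W^2$ term via the constant-speed geodesic interpolant and the telescoping sum are identical. Your remark that uniqueness of the minimizer gives the a.e.\ equality $-\Phi_n'(r)=\tfrac{1}{2r^2}\W^2(\trho^\tau_{t^n+r},\rho^n)$ is a minor embellishment; only the inequality $-\Phi_n'(r)\ge\tfrac12|\partial\E|^2(\trho^\tau_{t^n+r})$ is needed, and it holds without uniqueness.
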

\begin{proof}
For fixed $n\in \N$, standard properties of the Moreau-Yosida approximation give the one-step dissipation estimate
$$
\H(\rho^{n+1})+ \frac {\W^2(\rho^n,\rho^{n+1})}{2\tau} + \frac 12\int_{t^n}^{t^{n+1}}\left|\partial\E\right|^2(\trho^\tau_t)\rd t
\leq \E(\rho^n),
$$
see e.g. \cite[lemma 7.5 and eq. (7.16)]{peletier2022jump} or \cite[theorem 3.1.4 with lemma 3.1.3]{AGS}.
Recall that by definition $\rho^\tau$ is the geodesic interpolation and therefore has constant speed $|\dot\rho^\tau_t|^2=cst =\frac{\W^2(\rho^n,\rho^{n+1})}{\tau^2}$.
Summing the previous inequality from $n=0$ to $n=N-1$, we get a telescopic sum for the $\E\left(\rho^{n+1}\right)-\E\left(\rho^n\right)$ term with by definition $\rho^N=\rho^\tau(t^N)$ and $\rho^0=\rho_0$, and \eqref{eq:discrete_EDI} follows.
\end{proof}


\section{Convergence to a dissipative weak solution}
\label{sec:convergence}
In this section we fully exploit the discrete estimates from Section~\ref{sec:discrete_estimates} in order to prove our main result.
For the ease of exposition we split the proof into separate statements.
 Our first result is a rather soft and generic convergence statement
\begin{prop}
\label{prop:abstract_convergence_curve}
There exists a discrete sequence $\tau\to 0$ (not relabeled for simplicity) and a continuous curve $\rho:[0,\infty)\to\P(\bO)$ with $\rho(0)=\rho_0$ such that
\begin{enumerate}
 \item
the interpolants $\rho^\tau,\trho^\tau,\orho^\tau$ converge locally uniformly towards the same limit $\rho$
 $$
 \sup\limits_{t\in[0,T]}\Big[\W(\rho_t,\orho_t^\tau)+ \W(\rho_t,\rho_t^\tau)+\W(\rho_t,\trho_t^\tau)\Big]\xrightarrow[\tau\to 0]{}0
 $$
 in any finite time interval $[0,T]$.
 \item
 we have the Energy Dissipation Inequality
 \begin{equation}
 \label{eq:EDI}
 \E(\rho_T)+\int_0^T\left(\frac 12\left|\dot \rho_t\right|^2 + \frac 12\D(\rho_t)\right)\rd t \leq \E(\rho_0),
 \hspace{1cm}\forall\,T>0.
 \end{equation}
\end{enumerate}
\end{prop}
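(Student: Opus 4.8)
The plan is to run the classical Ambrosio--Gigli--Savar\'e minimizing-movement scheme \cite[Ch.~2--3]{AGS}, the only non-standard point being that the limiting dissipation is expressed through $\D$ rather than $|\partial\E|^2$ --- which is exactly what Theorem~\ref{theo:slope_controls_Fisher} was designed to handle.

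\emph{Compactness and identification of the limit.} Since the geodesic interpolant has constant speed $|\dot\rho^\tau_t|^2=\W^2(\rho^n,\rho^{n+1})/\tau^2$ on each $[t^n,t^{n+1}]$, the uniform bound \eqref{eq:uniform_speed_slope_estimate} gives $\int_0^\infty|\dot\rho^\tau_t|^2\,\rd t\leq 2\E(\rho_0)$ and hence the equicontinuity estimate $\W(\rho^\tau_s,\rho^\tau_t)\leq\sqrt{2\E(\rho_0)}\,\sqrt{t-s}$ for all $s<t$. As $(\P(\bO),\W)$ is a compact metric space, a refined Ascoli--Arzel\`a theorem (\cite[Prop.~3.3.1]{AGS}) provides a subsequence $\tau\to0$ along which $\rho^\tau$ converges uniformly on every $[0,T]$ to a $\tfrac12$-H\"older, hence absolutely continuous, curve $\rho$ with $\rho(0)=\rho_0$ (because $\rho^\tau(0)=\rho_0$). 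To see that $\orho^\tau$ and $\trho^\tau$ share the same limit, note that $\W^2(\rho^n,\rho^{n+1})=\tau\int_{t^n}^{t^{n+1}}|\dot\rho^\tau_t|^2\,\rd t\leq 2\tau\E(\rho_0)$, so $\sup_{t}\W(\rho^\tau_t,\orho^\tau_t)\leq\sqrt{2\tau\E(\rho_0)}\to0$; and, comparing $\trho^\tau_t$ with $\rho^n$ in its defining minimization \eqref{eq:def_DeGiorgi_interpolant}, $\W^2(\trho^\tau_t,\rho^n)\leq 2r\bigl(\E(\rho^n)-\E(\trho^\tau_t)\bigr)\leq 2\tau\E(\rho_0)$ for $t=t^n+r$ with $r\leq\tau$ (using that $\E(\rho^n)$ is nonincreasing in $n$). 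Hence all three interpolants converge uniformly on compact time intervals to $\rho$, which is item~(1).

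\emph{Passing to the limit in the discrete EDI.} Fix $T>0$ and let $N=N(\tau)$ be the smallest integer with $t^N\geq T$, so that $t^N\to T$. Discarding the nonnegative integrand on $[T,t^N]$, \eqref{eq:discrete_EDI} yields
\[
\E\bigl(\rho^\tau(t^N)\bigr)+\int_0^T\Bigl(\tfrac12|\dot\rho^\tau_t|^2+\tfrac12|\partial\E|^2(\trho^\tau_t)\Bigr)\,\rd t\leq\E(\rho_0).
\]
I then take $\liminf_{\tau\to0}$ and treat the three contributions by lower semicontinuity. First, $\rho^\tau(t^N)\to\rho_T$ in $\W$ (hence narrowly, $\bO$ being compact) and $\E$ is narrowly lower semicontinuous, so $\liminf_\tau\E(\rho^\tau(t^N))\geq\E(\rho_T)$. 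Second, the kinetic term is lower semicontinuous along $\W$-uniformly convergent curves: the speeds $t\mapsto|\dot\rho^\tau_t|$ are bounded in $L^2(0,T)$, and extracting a weak limit $A$, testing $\W(\rho^\tau_s,\rho^\tau_t)\leq\int_s^t|\dot\rho^\tau_r|\,\rd r$ against $\mathbf{1}_{[s,t]}$ in the limit shows $\W(\rho_s,\rho_t)\le\int_s^t A$, so $A$ is an admissible upper bound for the metric speed of the (already absolutely continuous) curve $\rho$, whence $\int_0^T|\dot\rho_t|^2\,\rd t\leq\int_0^TA^2\leq\liminf_\tau\int_0^T|\dot\rho^\tau_t|^2\,\rd t$. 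Third, for the dissipation term I invoke \eqref{eq:slope_control} to bound $|\partial\E|^2(\trho^\tau_t)\geq\D(\trho^\tau_t)$, and then use the narrow lower semicontinuity of $\D$ (Theorem~\ref{theo:slope_controls_Fisher}) together with $\trho^\tau_t\to\rho_t$ for each $t$ (from item~(1)) and Fatou's lemma:
\[
\int_0^T\D(\rho_t)\,\rd t\leq\int_0^T\liminf_\tau\D(\trho^\tau_t)\,\rd t\leq\liminf_\tau\int_0^T\D(\trho^\tau_t)\,\rd t\leq\liminf_\tau\int_0^T|\partial\E|^2(\trho^\tau_t)\,\rd t.
\]
Combining the three estimates gives $\E(\rho_0)\geq\E(\rho_T)+\int_0^T\bigl(\tfrac12|\dot\rho_t|^2+\tfrac12\D(\rho_t)\bigr)\,\rd t$, which is \eqref{eq:EDI}, and item~(2) follows.

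I do not anticipate a hard obstacle here: the genuinely delicate analytical work --- relating the slope $|\partial\E|^2$ to $\D$ and proving that $\D$ is narrowly lower semicontinuous --- has already been carried out in Theorem~\ref{theo:slope_controls_Fisher}. The points that still demand some care are the identification of all three interpolants with a single limit (routine, via the uniform smallness $\W^2(\rho^n,\rho^{n+1})=O(\tau)$ of the one-step displacements) and the lower semicontinuity of the metric-speed term, which is classical \cite[Ch.~1]{AGS} but relies on compactness of $\bO$ to pass freely between $\W$-convergence and narrow convergence.
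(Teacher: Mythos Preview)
Your proposal is correct and follows essentially the same route as the paper: Ascoli--Arzel\`a from the uniform $L^2$ speed bound for compactness of $\rho^\tau$, the one-step estimates $\W^2(\rho^n,\rho^{n+1})\leq 2\tau\E(\rho_0)$ and $\W^2(\trho^\tau_t,\rho^n)\leq 2\tau\E(\rho_0)$ to identify the limits of $\orho^\tau$ and $\trho^\tau$, and then the three lower-semicontinuity ingredients (of $\E$, of the metric-speed action, and of $\D$ via Theorem~\ref{theo:slope_controls_Fisher} combined with Fatou) to pass to the limit in the discrete EDI. Your treatment of the metric-speed term via a weak $L^2$ limit is slightly more explicit than the paper's one-line appeal to lower semicontinuity of the action, but the content is the same.
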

\begin{proof}
First, owing to the Prokhorov theorem, $\P(\bO)$ is relatively compact for the narrow convergence.
Since in bounded domains the Wasserstein distance metrizes the narrow convergence \cite[thm 6.9]{villani_BIG} we have that $\{\rho^\tau_t\}_{\tau>0}$ lies in a fixed $\W$-relatively compact set for any fixed $t\geq 0$.
By \eqref{eq:uniform_speed_slope_estimate} we also have $\sup\limits_{\tau\to 0}\int_0^T |\dot\rho^\tau_t|^2\rd t<+\infty$ for any fixed $T>0$.
In particular $\W(\rho^\tau_t,\rho^\tau_s)\leq C_T|t-s|^\frac12$ in any finite interval.
By the Ascoli-Arzel\'a theorem there is a discrete sequence such that $\rho^\tau$ converges $\W$-uniformly to a limit curve $\rho$.
By diagonal extraction if needed we can assume that the limit does not depend on the finite horizon $T>0$, hence $\rho^\tau\to\rho$ locally uniformly.

Fix now any $t\geq 0$, and let $n=\lfloor t/\tau\rfloor$ such that $t=t^n+r\in [t^n,t^{n+1})$ for some $r\in [0,\tau)$.
Testing $\rho=\rho^n$ as a competitor in \eqref{eq:def_DeGiorgi_interpolant} and recalling that $\E(\tilde\rho^\tau_t)\geq 0$, we get the rough bound
$$
\frac{1}{2r}\W^2(\trho^\tau_t,\rho^n)
\leq
\E(\trho^\tau_t)+\frac{1}{2r}\W^2(\trho^\tau_t,\rho^n) \leq \E(\rho^n),
$$
so that
$$
\W^2(\trho^\tau_t,\rho^n) \leq 2r \E(\rho^n)\leq 2\tau \E(\rho_0).
$$
Similarly testing $\rho=\rho^n$ in \eqref{eq:JKO}, and because $\rho^\tau$ interpolates geodesically between $\rho^n,\rho^{n+1}$, one gets
$$
\W^2(\rho^\tau_t,\rho^n)
\leq
\W^2(\rho^{n+1},\rho^n)
\leq
2\tau \E(\rho^n)
\leq 2\tau \E(\rho_0).
$$
By triangular inequality we get
$$
\W(\trho^\tau_t,\rho^\tau_t)
\leq
\W(\trho^\tau_t,\rho^n)+\W(\rho^n,\rho^\tau_t)\leq 2\sqrt{2\tau\E(\rho_0)}\to 0
$$
and therefore $\trho^\tau$ also converges uniformly to the same limit $\rho$ as $\rho^\tau$.
For the same reason
$$
\W(\orho^\tau_t,\rho^\tau_t)
=
\W(\rho^{n+1},\rho^\tau_t)
\leq
\W(\rho^{n+1},\rho^n)
\leq
\sqrt{2\tau\E(\rho_0)}\to 0
$$
and thus $\orho^\tau$ also converges uniformly to $\rho$.
This common limit trivially satisfies the initial condition in the sense that $\rho(0)=\lim \rho^\tau(0)=\lim \rho_0=\rho_0$.

Finally let us turn into the dissipation inequality \eqref{eq:EDI}.
For fixed $T>0$ let $N=\lfloor T/\tau\rfloor$ and $t^N=N\tau$, so that $T\in [t^N,t^{N+1})$.
By continuity of $t\mapsto \rho_t$ and arguing exactly as before we have $\W(\rho_T,\rho^\tau_{t^{N+1}})\leq \W(\rho_T,\rho^\tau_T)+\W(\rho^\tau_T,\rho^\tau_{t^{N+1}})\to  0$ and therefore by lower semi-continuity
\begin{equation}
\label{eq:H_leq_liminf_HN}
\E(\rho_T)\leq \liminf\limits_{\tau\to 0} \E(\rho^\tau_{t^{N+1}}).
\end{equation}
For the dissipation terms, we note first from $T\leq t^{N+1}$ and $\D\leq |\partial\E|^2$ in Theorem~\ref{theo:slope_controls_Fisher} that
$$
\int_0^{T}\left(\frac 12\left|\dot \rho^\tau_t\right|^2 + \frac 12\D(\trho^\tau_t)\right)\rd t
\leq
\int_0^{t^{N+1}}\left(\frac 12\left|\dot \rho^\tau_t\right|^2 + \frac 12\left|\partial\E\right|^2(\trho^\tau_t)\right)\rd t
$$
simply because $T\leq t^{N+1}$.
By lower semicontinuity of the $H^1$ action $\rho\mapsto\int |\dot\rho_t|^2\rd t$ w.r.t. uniform convergence (as a supremum of continuous functions) we have that
\begin{equation}
\label{eq:speed_leq_liminf_speed}
\int_0^T|\dot\rho_t|^2\rd t
\leq
\liminf \limits_{\tau\to 0}
\int_0^T|\dot\rho^\tau_t|^2\rd t
\leq
\liminf \limits_{\tau\to 0}
\int_0^{t^{N+1}}|\dot\rho^\tau_t|^2\rd t.
\end{equation}
By Fatou's lemma and the lower semicontinuity of $\D$ (Theorem~\ref{theo:slope_controls_Fisher}) we get
\begin{equation}
 \label{eq:slope_leq_liminf_slope}
\int _0^T \D(\rho_t)\rd t
\leq
\int _0^T \liminf\limits_{\tau\to 0} \D(\rho^\tau_t)\rd t
\leq
\liminf \limits_{\tau\to 0}\int _0^T \D(\rho^\tau_t)\rd t
\leq
\liminf \limits_{\tau\to 0}\int _0^{t^{N+1}} |\partial\E|^2(\rho^\tau_t)\rd t
\end{equation}
Gathering \eqref{eq:H_leq_liminf_HN}\eqref{eq:speed_leq_liminf_speed}\eqref{eq:slope_leq_liminf_slope}, we finally retrieve
\begin{multline*}
\E(\rho_T)+\int_0^T\left(\frac 12\left|\dot \rho_t\right|^2 + \frac 12\D(\rho_t)\right)\rd t
\\
\leq
\liminf\limits_{\tau\to 0} \E(\rho^\tau_{t^{N+1}})
+
\frac 12\liminf \limits_{\tau\to 0}
\int_0^{t^{N+1}}|\dot\rho^\tau_t|^2\rd t
+
\frac 12 \liminf\limits_{\tau\to 0} \int _0^{t^{N+1}} |\partial\E|^2(\rho^\tau_t)\rd t
\\
\leq
\liminf\limits_{\tau\to 0} \left\{\E(\rho^\tau_{t^{N+1}})
+
\int_0^{t^{N+1}}\left(\frac 12|\dot\rho^\tau_t|^2\rd t
+
\frac 12 |\partial\E|^2(\rho^\tau_t)\right)\rd t
\right\}
\overset{\eqref{eq:discrete_EDI}}{\leq }
\E(\rho_0)
\end{multline*}
and the proof is complete.
\end{proof}
So far this proof of convergence relied solely on soft and fairly general metric considerations, with the exception that $\D\geq |\partial\E|^2$ from Theorem~\ref{theo:slope_controls_Fisher}.
The dissipation estimate \eqref{eq:EDI}, however, was retrieved in terms of the explicit functional $\D(\rho)=\I(\o)+\I(\g)$ rather than the less tractable slope $|\partial\E|(\rho)$.
Since we could not prove that $\D=|\partial\E|^2$ in Theorem~\ref{theo:slope_controls_Fisher}, and because we could not establish a general upper chain rule, this is where purely metric arguments become limited.
In order to retrieve the PDE we thus have to resort to more direct arguments, mainly based on the Euler-Lagrange optimality condition from Lemma~\ref{lem:Euler-Lagrange_pi}.

For technical reasons we first need to upgrade the narrow convergence $\orho^\tau_t\narrowcv \rho_t$ to separate convergence of the interior and boundary parts.
\begin{lem}
\label{lem:separate_convergence_o_g}
For the same discrete sequence $\tau\to 0$ as in Proposition~\ref{prop:abstract_convergence_curve} there holds
\begin{equation*}
\oo^\tau_t\rightharpoonup\o_t\mbox{ in }L^1(\O)
\qtext{and}
\og^\tau_t\rightharpoonup\g_t\mbox{ in }L^1(\G)
\hspace{1cm}\mbox{for all }t\geq 0,
\end{equation*}
\end{lem}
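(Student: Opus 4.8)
The plan is to upgrade the narrow convergence $\orho^\tau_t\narrowcv\rho_t$ obtained in Proposition~\ref{prop:abstract_convergence_curve} into a \emph{separate} weak $L^1$ convergence of the interior and boundary parts, and the point is that this needs no Fisher-information input whatsoever (in contrast with the delicate trace-compatibility part of Theorem~\ref{theo:slope_controls_Fisher}): only the monotonicity of the entropy along the JKO scheme will be used. First I would note that, testing $\rho=\rho^n$ as a competitor in the JKO step~\eqref{eq:JKO}, one gets $\frac1{2\tau}\W^2(\rho^{n+1},\rho^n)+\E(\rho^{n+1})\leq\E(\rho^n)$, so that $n\mapsto\E(\rho^n)$ is non-increasing and hence $\E(\orho^\tau_t)\leq\E(\rho_0)$ for every $t\geq 0$ and every $\tau>0$. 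Since $\E(\rho)=\H(\o)+\H(\g)$ is a sum of two nonnegative terms (recall $H\geq 0$), this gives the bounds $\int_\O H(\oo^\tau_t)\,\rd x\leq\E(\rho_0)$ and $\int_\G H(\og^\tau_t)\,\rd x\leq\E(\rho_0)$, uniformly in both $t$ and $\tau$.

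Because $H(z)/z\to+\infty$ as $z\to+\infty$, the de la Vallée--Poussin criterion then turns these bounds into uniform integrability of $\{\oo^\tau_t\}_\tau$ in $L^1(\O)$ and of $\{\og^\tau_t\}_\tau$ in $L^1(\G)$; being moreover bounded in $L^1$ (total mass $\leq 1$), both families are relatively weakly compact in $L^1(\O)$, resp.\ $L^1(\G)$, by Dunford--Pettis. To identify the limit, I would fix $t\geq 0$ and extract, from an arbitrary subsequence, a further subsequence along which $\oo^\tau_t\rightharpoonup\hat\o$ in $L^1(\O)$ and $\og^\tau_t\rightharpoonup\hat\g$ in $L^1(\G)$. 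Weak $L^1$ convergence on $\O$ (resp.\ on $\G$) passes to narrow convergence of the associated measures on $\bO$, so $\orho^\tau_t=\oo^\tau_t+\og^\tau_t\narrowcv\hat\o\,\LO+\hat\g\,\LG$; comparing with $\orho^\tau_t\narrowcv\rho_t$ forces $\rho_t=\hat\o\,\LO+\hat\g\,\LG$, and since $\LO$ and $\LG$ are mutually singular this is precisely the interior/boundary decomposition of $\rho_t$, i.e.\ $\hat\o=\o_t$ and $\hat\g=\g_t$. As the limit does not depend on the chosen subsequence, the full sequence converges, which is the claim (valid for every $t\geq 0$, since the entropy bound above holds for every $t$).

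I do not anticipate any real obstacle. The only mildly delicate point is the standard fact that weak $L^1(\O)$ convergence gives narrow convergence of the corresponding measures on the \emph{closed} set $\bO$ with no loss of mass on $\G=\pO$, which is immediate since $\LO(\G)=0$ and every $\varphi\in C_b(\bO)$ restricts to an $L^\infty(\O)$ function; on the boundary side the analogous statement is equally routine because $\G$ is a compact manifold without boundary.
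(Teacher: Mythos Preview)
Your proof is correct and follows essentially the same approach as the paper: the uniform entropy bound $\E(\orho^\tau_t)\leq\E(\rho_0)$ (which the paper cites from the discrete EDI rather than rederiving via monotonicity) gives weak $L^1$ compactness of $\oo^\tau_t$ and $\og^\tau_t$ via de la Vall\'ee--Poussin/Dunford--Pettis, and the limits are identified from the already-known narrow convergence $\orho^\tau_t\narrowcv\rho_t$ together with the mutual singularity of $\LO$ and $\LG$. The paper's proof is more terse but identical in substance.
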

\begin{proof}
Fix any $t\geq 0$.
From the entropy bounds $\H(\oo^\tau_t)+\H(\og^\tau_t)=\E(\bar\rho^\tau_t)\leq \E(\rho_0)$ in \eqref{eq:discrete_EDI}, for any subsequence $\tau_k$ there is a sub-subsequence $\tau_{k_l}$ such that $\oo^{\tau_{k_l}}_t\rightharpoonup u$ and $\og^{\tau_{k_l}}_t\rightharpoonup v$ in $L^1(\O),L^1(\G)$  as $l\to\infty$, for some limits $u,v$.
But since we already know that $\orho^\tau_t\narrowcv \rho_t$ this shows that any such limits are necessarily $u=\rho_t\mrest\O=\o_t$ and $v=\rho_t\mrest\pO=\g_t$.
This classically implies that the whole sequence converges and concludes the proof.
\end{proof}

\begin{prop}
\label{prop:rho_weak_sol}
The curve $\rho=\o+\g$ from Proposition~\ref{prop:abstract_convergence_curve} solves
$$
\begin{cases}
 \partial_t\omega=\Delta\omega & \mbox{in }\Omega\\
 \omega=\gamma & \mbox{on }\partial\Omega\\
 \partial_t\gamma=\Delta_\G\gamma-\partial_\nu\omega  & \mbox{in }\pO
\end{cases}
$$
in the sense that $\o_t|_{\pO}=\g_t$ for a.e. $t\geq 0$, $\o\in L^1_{loc}([0,\infty);W^{1,p}(\O))$ and $\g\in L^1_{loc}([0,\infty);W^{1,q}(\G))$ with $p,q>1$ given by \eqref{eq:Fisher_mass_control_W1p} (with respectively $d\geq 2$ and $d-1\geq 1$), and
$$
\int_{\bO}\varphi\rho_{t_1} -\int_{\bO}\varphi\rho_{t_0}
+ \int_{t_0}^{t_1}\int_{\O}\nabla\varphi\cdot \nabla\o
+ \int_{t_0}^{t_1}\int_{\G}\nabla_\G\varphi\cdot \nabla_\G\g
=0
\hspace{1cm}\forall\,\varphi\in C^1(\bO),\quad\forall\,0\leq t_0<t_1<\infty.
$$
\end{prop}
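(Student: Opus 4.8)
Here is how I would go about proving Proposition~\ref{prop:rho_weak_sol}. The idea is to pass to the limit $\tau\to0$ in the discrete Euler--Lagrange identity of Lemma~\ref{lem:Euler-Lagrange_pi}, using the separate convergences $\oo^\tau_t\rightharpoonup\o_t$, $\og^\tau_t\rightharpoonup\g_t$ of Lemma~\ref{lem:separate_convergence_o_g} together with the Fisher information bounds contained in \eqref{eq:control_discrete_Io_Ig} and \eqref{eq:discrete_EDI}. The ``static'' part of the statement is immediate from the abstract convergence result: the Energy Dissipation Inequality \eqref{eq:EDI} gives $\int_0^T\D(\rho_t)\,\rd t\le\E(\rho_0)<\infty$, hence $\D(\rho_t)<\infty$ for a.e.\ $t$, and by the very definition \eqref{eq:def_dissipation_D} of $\D$ this is precisely the trace compatibility $\o_t|_{\pO}=\g_t$ together with $\I(\o_t)+\I(\g_t)<\infty$ a.e.; feeding this into \eqref{eq:Fisher_mass_control_W1p} (applied with dimension $d$ to $\o_t$ and with dimension $d-1$ to $\g_t$) yields the claimed regularity $\o\in L^1_{loc}(W^{1,p}(\O))$, $\g\in L^1_{loc}(W^{1,q}(\G))$.

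The substantial point is the weak formulation, and the first step is to gain enough compactness on the gradients. Summing \eqref{eq:control_discrete_Io_Ig} over the JKO steps against the weight $\tau$, and using $\sum_n\W^2(\rho^n,\rho^{n+1})=\tau\int_0^{t^N}|\dot\rho^\tau_t|^2\,\rd t\le2\tau\E(\rho_0)$, one gets the uniform bound $\int_0^T\bigl(\I(\oo^\tau_t)+\I(\og^\tau_t)\bigr)\rd t\le2\E(\rho_0)$. Since $\I(\oo^\tau_t)=4\,\|\nabla\sqrt{\oo^\tau_t}\|_{L^2(\O)}^2$, the family $\{\sqrt{\oo^\tau}\}$ is bounded in $L^2(0,T;H^1(\O))$, while $\{\oo^\tau\}$ has equibounded entropy and, through the geodesic interpolant $\rho^\tau$, a uniform Wasserstein modulus of continuity in time. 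A compactness argument of Aubin--Lions type in the measure-valued setting (see e.g.\ Rossi--Savar\'e), combined with Lemma~\ref{lem:separate_convergence_o_g}, then gives $\oo^\tau\to\o$ strongly in $L^1((0,T)\times\O)$, whence $\sqrt{\oo^\tau}\to\sqrt\o$ strongly in $L^2$ (the $L^2$ norms converge, $\|\sqrt{\oo^\tau}\|_{L^2}^2=\int\oo^\tau\to\int\o$), $\nabla\sqrt{\oo^\tau}\rightharpoonup\nabla\sqrt\o$ weakly in $L^2$, and finally $\nabla\oo^\tau=2\sqrt{\oo^\tau}\,\nabla\sqrt{\oo^\tau}\rightharpoonup\nabla\o$ weakly in $L^1((0,T)\times\O)$. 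The same reasoning on $\G$ gives $\nabla_\G\og^\tau\rightharpoonup\nabla_\G\g$ weakly in $L^1((0,T)\times\G)$.

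Next I would write the discrete continuity equation against test functions with vanishing normal derivative, which is what neutralizes the troublesome normal measure. Fix $\varphi\in C^2(\bO)$ with $\partial_\nu\varphi=0$ on $\pO$. A second-order Taylor expansion along the (straight, since $\bO$ is convex) segments of the optimal plan $\pi^{n+1}$ gives, with $m^{n+1}$ as in \eqref{eq:def_momentum_n+1},
$$
\int_\bO\varphi\,\rd\rho^{n+1}-\int_\bO\varphi\,\rd\rho^n=\tau\int_\bO\nabla\varphi\cdot\rd m^{n+1}+R^{n+1},\qquad |R^{n+1}|\le\tfrac12\|D^2\varphi\|_\infty\,\W^2(\rho^n,\rho^{n+1}).
$$
By \eqref{eq:Euler-Lagrange_mn+1}, and because $\mathfrak n_\G^{n+1}$ is normal while $\partial_\nu\varphi|_{\pO}=0$, the normal measure drops out and $\int_\bO\nabla\varphi\cdot\rd m^{n+1}=-\int_\O\nabla\varphi\cdot\nabla\o^{n+1}-\int_\G\nabla_\G\varphi\cdot\nabla_\G\g^{n+1}$. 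Summing over consecutive JKO steps, discarding $\sum R^{n+1}$ (bounded by $\tfrac12\|D^2\varphi\|_\infty\sum_n\W^2(\rho^n,\rho^{n+1})\le\|D^2\varphi\|_\infty\,\tau\,\E(\rho_0)\to0$), and letting $\tau\to0$ using Proposition~\ref{prop:abstract_convergence_curve} for the pointwise-in-time terms and the previous step for the two gradient terms, yields the desired identity for all such $\varphi$; the time endpoints are best treated by first testing against $\psi\in C^1_c((0,\infty))$ in time and then recovering the pointwise statement from the $\W$-continuity of $t\mapsto\rho_t$ and the $L^1_{loc}$-in-time integrability of the gradient terms. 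To remove the constraint $\partial_\nu\varphi=0$, given an arbitrary $\varphi\in C^1(\bO)$ I would flatten it in a boundary collar of width $\eta$ into $\varphi_\eta\in C^1(\bO)$ with $\partial_\nu\varphi_\eta=0$ on $\pO$, $\varphi_\eta|_{\pO}=\varphi|_{\pO}$, $\|\varphi_\eta\|_{C^1}\le C\|\varphi\|_{C^1}$, $\varphi_\eta\to\varphi$ uniformly, $\nabla\varphi_\eta\to\nabla\varphi$ a.e.\ in $\O$; since $\nabla\o\in L^1_{loc}$ in space-time, dominated convergence lets $\eta\to0$ in the identity written for $\varphi_\eta$ (the $\G$-integral being unchanged), which gives the statement for every $\varphi\in C^1(\bO)$.

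The main obstacle is the compactness step: because the reference measure $\mu$ charges $\pO$ the entropy has no $\lambda$-convexity and no uniform-in-time Sobolev bound is available, so the Fisher information only lives in $L^1_t\,W^{1,p}_x$ with $p$ possibly $<2$, and passing to the limit in the nonlinear relation $\nabla\o=2\sqrt\o\,\nabla\sqrt\o$ genuinely forces one through a strong $L^2$ compactness argument for $\sqrt{\oo^\tau}$ (and likewise $\sqrt{\og^\tau}$ on $\G$). A secondary, conceptually useful point is that the normal component $\mathfrak n_\G^{n+1}$ of the discrete momentum --- which we were not able to show vanishes already at the discrete level --- never has to be estimated, precisely because the weak formulation can be tested against functions with vanishing normal derivative.
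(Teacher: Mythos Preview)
Your approach is correct and offers a genuine alternative to the paper's proof. The key difference lies in how the limit of the gradient terms is identified. You establish weak $L^1((0,T)\times\O)$ convergence of $\nabla\oo^\tau$ by first proving strong $L^1$ convergence of $\oo^\tau$ via an Aubin--Lions/Rossi--Savar\'e compactness argument, then exploiting the product structure $\nabla\oo^\tau=2\sqrt{\oo^\tau}\,\nabla\sqrt{\oo^\tau}$ (strong $L^2$ times weak $L^2$). The paper avoids strong compactness entirely: it introduces the time-dependent momentum $m^\tau$ along the geodesic interpolation and its piecewise-constant sibling $\overline m^\tau$, obtains narrow compactness $\overline m^\tau\narrowcv m$ from the elementary total-variation bound $|\overline m^\tau|(Q_T)\leq C_T$, and then identifies $m_\O=-\nabla\o$ by integrating by parts $\int_\O\xi\cdot\overline m^\tau_\O=-\int_\O\xi\cdot\nabla\oo^\tau=\int_\O\dive(\xi)\,\oo^\tau$ and passing to the limit using only the pointwise weak-$L^1$ convergence $\oo^\tau_t\rightharpoonup\o_t$ of Lemma~\ref{lem:separate_convergence_o_g} plus dominated convergence in time. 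This is lighter machinery, and in particular sidesteps the subtlety in your sketch that the Wasserstein modulus of continuity controls the full $\rho^\tau$, not $\oo^\tau$ alone (this can be repaired by a suitable choice of pseudo-distance in the Rossi--Savar\'e framework, but it is not free). The two proofs also differ in how they dispose of the normal boundary measure $\mathfrak n_\G^{n+1}$: you neutralize it at the discrete level by restricting to test functions with $\partial_\nu\varphi=0$ and then remove this constraint by a collar-flattening approximation, whereas the paper lets it survive to the limit and proves separately (Proposition~\ref{prop:m_boundary_tangential}) that the limiting boundary momentum $m_\G$ is tangential, using the no-flux structure of the continuity equation. Your handling of the normal measure is arguably more direct; the paper's handling of the gradient limits is more elementary.
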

\begin{proof}
We first observe from the energy dissipation \eqref{eq:EDI} that $\int_0^T\D(\rho_t)dt<\infty$.
By Theorem~\ref{theo:slope_controls_Fisher} this immediately implies $\o_t|_{\pO}=\g_t$ for a.e. $t\geq 0$ and, combined with \eqref{eq:Fisher_mass_control_W1p}, also entails the claimed Sobolev regularity for $\o,\g$.

Let us now focus on the weak formulation of the PDE.
Recalling that the piecewise geodesic interpolation $\rho^\tau_t$ is defined by \eqref{eq:def_rho_tau_geodesic}, we naturally define the corresponding momentum $ m^\tau_t\in \M(\bO)^d$ as follows.
Let $\pi^{n+1}\in \Pi(\rho^n,\rho^{n+1})$ be the optimal plan from Lemma~\ref{lem:Euler-Lagrange_pi}.
For $x,y\in \bO$ we denote by
$$
z_t^{xy}\coloneqq \frac{t^{n+1}-t}{\tau} x + \frac{t-t^n}{\tau} y,
\hspace{1cm}t\in [t^n,t^{n+1}]
$$
the time-$\tau$ geodesic from $x$ to $y$.
Setting
$$
\label{eq:def_momentum_mtau}
\int _\bO \xi(z)\cdot m^{\tau}_t(\rd z)
\\ \coloneqq \iint_{\bO\times\bO}\frac{y-x}{\tau}\cdot \xi\left(z_t^{xy}\right) \pi^n(\rd x,\rd y),
\hspace{1cm}
\forall\,\xi\in C_b(\bO)^d
$$
and $m^\tau\coloneqq m^\tau_t\rd t\in \M^d\left([0,\infty)\times \bO\right)$, a classical computation shows that $(\rho^\tau,m^\tau)$ solve the continuity equation $\partial_t \rho^\tau +\dive m^\tau=0$ on $(0,\infty)\times \bO$ with no flux boundary conditions, i.e.
\begin{equation}
\label{eq:continuity_equation_rhotau_mtau}
\int_{\bO}\varphi\,\rho^\tau_{t_1} -\int_{\bO}\varphi\,\rho^\tau_{t_0}
+ \int_{t_0}^{t_1}\int_{\bO}\nabla\varphi\cdot m^\tau
=0
\hspace{1cm}\forall\,\varphi\in C^1(\bO),\quad\forall\ 0\leq t_0<t_1.
\end{equation}
Moreover for any fixed $t\in [t^n,t^{n+1}]$ we have
\begin{multline*}
 \left|\int _\bO \xi(z)\cdot m^{\tau}_t(\rd z)\right|
\leq
\left(\iint_{\bO\times\bO}\left|\frac{y-x}{\tau}\right| \pi^n(\rd x,\rd y)\right)\|\xi\|_{L^\infty(\bO)}
\\
\leq
\left(\iint_{\bO\times\bO}
\frac{|y-x|^2}{\tau^2}
\pi^n(\rd x,\rd y)\right)^{\frac 12}\|\xi\|_{L^\infty(\bO)}
\\
=
\frac{\W(\rho^n,\rho^{n+1})}{\tau}\|\xi\|_{L^\infty(\bO)}
=
\left|\dot\rho^\tau_t\right|\  \|\xi\|_{L^\infty(\bO)}.
\end{multline*}
As a consequence in any finite time interval $Q_T=[0,T]\times\bO$ we have
$$
\left |m^\tau\right|(Q_T)
=
\int_0^T\left|m^\tau_t\right|(\bO)\, \rd t
\leq
\int_0^T \left|\dot\rho^\tau_t\right|\, \rd t
\leq \sqrt T \left(\int_0^T \left|\dot\rho^\tau_t\right|^2\rd t\right)^{\frac 12}
\leq C_T
$$
uniformly in $\tau>0$, where the last inequality follows from \eqref{eq:uniform_speed_slope_estimate}.
By standard compactness and diagonal extraction this gives convergence $m^\tau\narrowcv m$ narrowly as $\tau \to 0$ in any finite time interval for some limit $m$ (and up to subsequences if needed).
Moreover the weak formulation \eqref{eq:continuity_equation_rhotau_mtau} for $\partial_t \rho^\tau+\dive m^\tau=0$ immediately passes to the limit and $(\rho,m)$ therefore also solve the same continuity equation.

Similarly, if $m^{n+1}$ is the discrete momentum from Lemma~\ref{lem:Euler-Lagrange_pi}, it is easy to see that the piecewise constant momentum $\overline m^\tau=\overline m^\tau_t\rd t$ with
$$
\overline m^\tau_t \coloneqq m^{n+1}
\hspace{1cm}\mbox{for }t\in (t^n,t^{n+1}]
$$
satisfies $|\overline m^\tau|(Q_T)\leq C_T$ and therefore also converges to some $\overline m$.
Exactly as in \cite[lemma 8.9]{OTAM} we claim that $\overline m=m$.
Indeed, fix any Lipschitz function $\xi:\bO\to \R^d$ and $t\in [0,T]$.
For $t\in (t^n,t^{n+1}]$ the time-$\tau$ geodesic $z^{xy}_t$ between $x,y$ satisfies of course $|z_t^{xy}-y|\leq |x-y|$, hence by definition of $m$ and $\overline m$
\begin{multline*}
\left|\int_\bO \xi\cdot\left[m^\tau_t-\overline m^\tau_t\right]\right|
=
\left|\iint_{\bO\times\bO} \frac{y-x}{\tau}\cdot [\xi(z^{xy}_t)-\xi(y)]\pi^{n+1}(\rd x,\rd y)\right|
\\
\leq \operatorname{Lip}(\xi)\iint_{\bO\times\bO} \frac{|y-x|^2}{\tau}\pi^{n+1}(\rd x,\rd y)
\\
=
\operatorname{Lip}(\xi) \frac{\W^2(\rho^n,\rho^{n+1})}{\tau}
=\tau\operatorname{Lip}(\xi) |\dot\rho^\tau_t|^2.
\end{multline*}
As a consequence for any Lipschitz function $\zeta:[0,T]\times \bO\to \R^d$ we have
$$
\left|\int_0^T\int_\bO \zeta\cdot\left[m^\tau-\overline m^\tau\right]\right|
\leq \tau\operatorname{Lip}(\zeta)\int_0^T |\dot\rho^\tau_t|^2\,\rd t\leq C\tau\to 0,
$$
and since $m^\tau\narrowcv m,\overline m^\tau\narrowcv \overline m$ the claim follows.

Finally, let us show that $ m=-\nabla\o\cdot\LO- \nabla_\G\g\cdot\LG$.
For expediency we denote $m_\O=m\mrest [0,\infty)\times \O$ and $m_\G=m\mrest [0,\infty)\times \O$.
We first deal with the interior part.
By Lemma~\ref{lem:Euler-Lagrange_pi} we have, for fixed $0\leq t_0<t_1$ and any $\xi\in C^1_c(\O)$ compactly supported away from the boundary,
\begin{multline*}
\int_{t_0}^{t_1}\int_\O \xi\cdot m_\O
=
\int_{t_0}^{t_1}\int_\bO \xi\cdot m
=
\lim\limits_{\tau\to 0}\int_{t_0}^{t_1}\int_\bO \xi\cdot\overline m^\tau
\\
=
\lim\limits_{\tau\to 0}\int_{t_0}^{t_1}\int_\O \xi\cdot\overline m^\tau_\O
\overset{\eqref{eq:Euler-Lagrange_mn+1}}{=}
-\lim\limits_{\tau\to 0}\int_{t_0}^{t_1}\int_\O \xi\cdot\nabla \oo^\tau
\\
=
\lim\limits_{\tau\to 0}\int_{t_0}^{t_1}\int_\O \dive(\xi)\oo^\tau
=
\int_{t_0}^{t_1}\int_\O \dive(\xi)\o,
\end{multline*}
where the last equality is a straightforward application of Lebesgue's dominated convergence theorem together with the pointwise weak $L^1$ convergence $\oo^\tau_t\rightharpoonup \o_t$ for all $t\geq 0$ from Lemma~\ref{lem:separate_convergence_o_g} (and the bound $|\int_\O\dive(\xi)\oo^\tau_t|\leq \|\dive \xi\|_{\infty}$ uniformly in $t$).
Since $t_0<t_1$ and $\xi\in C^1_c(\O)$ were arbitrary this identifies the interior part $m_\O=-\nabla\o$ at least in the distributional sense.

In order to identify now the boundary contribution $m_\G=-\nabla_\G\g$, observe first that $\partial_t\rho +\dive \overline m=\partial_t\rho +\dive m=0$ with no-flux boundary condition.
One should thus expect $m_\G$ to be tangential, otherwise mass would either exit or enter the domain through $\G$.
However the no-flux condition is enforced only in a weak form, similar to \eqref{eq:continuity_equation_rhotau_mtau} with $\rho,m$ in place of $\rho^\tau,m^\tau$.
The argument therefore needs some care and for simplicity we defer it to Proposition~\ref{prop:m_boundary_tangential}.
Thus taking for granted that $m_\G$ is tangential, we only need to show equality $m_\G=-\nabla_\G\g$ in duality with tangential vector-fields.
To this end take any such tangential $\xi\in C^1(\G)$, and extend it to $\zeta\in C^1(\bO)$.
We have now
\begin{multline*}
\int_{t_0}^{t_1}\int_\G\xi\cdot m_\G
=
\int_{t_0}^{t_1}\int_\G\zeta|_{\G}\cdot m_\G
=
\int_{t_0}^{t_1}\int_{\bO}\zeta\cdot m -  \int_{t_0}^{t_1}\int_{\O}\zeta\cdot m_\O
\\
=
\lim\limits_{\tau\to 0} \int_{t_0}^{t_1}\int_{\bO}\zeta\cdot \overline m^\tau
-
\int_{t_0}^{t_1}\int_{\O}\zeta\cdot \nabla \o
=
\lim\limits_{\tau\to 0} \int_{t_0}^{t_1}\int_{\bO}\zeta\cdot \overline m^\tau
+
\int_{t_0}^{t_1}\int_{\O}\dive(\zeta) \o
\\
=
\lim\limits_{\tau\to 0}\left\{ \int_{t_0}^{t_1}\int_{\bO}\zeta\cdot \overline m^\tau
+
\int_{t_0}^{t_1}\int_{\O}\dive(\zeta) \oo^\tau\right\}
\\
\overset{\eqref{eq:Euler-Lagrange_mn+1}}{=}
\lim\limits_{\tau\to 0}\left\{
-\int_{t_0}^{t_1}\int_{\bO}\zeta\cdot \left[\nabla\oo^\tau \LO+\nabla_\G\og^\tau \LG +\overline{\mathfrak n}_\G^\tau\right]
+
\int_{t_0}^{t_1}\int_{\O}\dive(\zeta) \oo^\tau
\right\}
\\
=
-\lim\limits_{\tau\to 0}\int_{t_0}^{t_1}\int_{\G}\xi\cdot \nabla_\G\og^\tau
=
\lim\limits_{\tau\to 0}\int_{t_0}^{t_1}\int_{\G}\dive_\G(\xi)\og^\tau
=
\int_{t_0}^{t_1}\int_{\G}\dive_\G(\xi)\g.
\end{multline*}
The last equality follows again from the pointwise $L^1$ convergence $\og^\tau_t\rightharpoonup \g_t$ in $L^1(\G)$ for all $t\geq 0$ from Lemma~\ref{lem:separate_convergence_o_g} and Lebesgue's dominated convergence.
Here we also heavily exploited that $\zeta|_\G=\xi$ is tangential on the boundary in order to suitably integrate by parts in both directions, as well as to disregard the normal part $\zeta\cdot \overline{\mathfrak n}_\G^\tau=0$ of $\overline m^\tau$ along the boundary.
This identifies $m_\G=-\nabla_\G\g$ in the distributional sense and the proof is complete.
\end{proof}

In order to fully establish our main result we are still missing the long-time convergence \eqref{eq:long_time} and the propagation of initial pointwise bounds.
Observe first that, since $\bar\mu=\frac{1}{\mu(\bO)}\mu$ is a simple renormalization to unit mass, our driving  entropy is just a vertical shift $\E(\rho)=\H(\rho\,\vert\,\mu)=\H(\rho\,\vert\,\bar\mu)+C_\mu$ and the Energy dissipation \eqref{eq:EDI} holds with $\H(\rho_T\,\vert\,\bar\mu)$ and $\H(\rho_0\,\vert\,\bar\mu)$ in place of $\E(\rho_T),\E(\rho_0)$.
Moreover, Proposition~\ref{prop:rho_weak_sol} implies in particular that $\rho$ solves the continuity equation $\partial_t\rho+\dive m=0$ with driving momentum $m_t=\nabla \o_t\cdot \LO + \nabla_\G\g_t\cdot \LG$.
By standard characterization of $\W_2$-absolutely continuous curves \cite{AGS} we see that $\frac 12|\dot\rho_t|^2\leq \J(m_t,\rho_t)=\frac 12\int_{\bO}\frac{|m_t|^2}{\rho_t}=\frac 12 \int_\O\frac{|\nabla\o_t|^2}{\o_t}+\frac 12 \int_\G\frac{|\nabla_\G\g_t|^2}{\g_t}=\frac 12 \D(\rho_t)$ for a.e. $t$.
As a consequence we have
$$
\H(\rho_T\,\vert\,\bar\mu)+\int_0^T \D(\rho_t)\rd t\leq \H(\rho_0\,\vert\,\bar\mu),
\qquad \forall \,T\geq 0.
$$
We exploit next a \emph{boundary trace Logarithmic-Sobolev inequality} from \cite[section 4]{bormann2023functional} , which can be phrased in our precise framework as
$$
\H(\rho\,\vert\,\bar\mu)\leq \frac 1{2\lambda} \D(\rho),
\hspace{1cm}\forall\,\rho\in\P(\bO)
$$
for some $\lambda\equiv\lambda(\O)>0$.
This shows that
$$
\H(\rho_T\,\vert\,\bar\mu)+2\lambda \int_0^T \H(\rho_t\,\vert\,\bar\mu)\rd t\leq \H(\rho_0\,\vert\,\bar\mu),
\qquad \forall \,T\geq 0
$$
and immediately entails the first exponential decay in \eqref{eq:long_time} by Gr\"onwall's lemma.
The decay in total variation is as usual a consequence of the general Csisz\'ar-Kullback-Pinsker inequality $|a-b|_{TV}\leq \sqrt{\frac{1}{2}\H(a\,\vert\,b)}$ for arbitrary probability distributions $a,b$.
Finally, the propagation of pointwise bounds $\rho_0\leq \overline c\mu$ (resp. $\rho_0\geq \underline c\mu$) is an easy consequence of Lemma \ref{lem:discrete max}, iterated along the JKO scheme.

\begin{appendices}

\section{Appendix}
\label{sec:appendix}
\begin{prop}
\label{prop:slope_controls_Fisher}
Let $\X$ be a smooth, compact Riemannian manifold, possibly with (smooth) boundary.
Let $\H(\rho)=\int_\X \rho\log\rho $ and $\I(\rho)=\int_\X |\nabla\log\rho|^2\rho$ be the entropy and Fisher information (relatively to the volume measure $\rd x$), and denote $\W_\X$ the Wasserstein distance with quadratic cost $c(x,y)=\dist_\X^2(x,y)$.
For any $\rho\in \P(\X)$ let $(\rho_t)_{t\geq 0}$ be the heat flow started from $\rho$ (with no-flux boundary condition on $\partial\X$ if needed).
Then
\begin{equation}
\label{eq:slope_controls_Fisher}
\frac 12|\partial\H|^2(\rho)
\geq
\liminf\limits_{t\to 0}\frac{1}{t}\left[\H(\rho)-\H(\rho_t)-\frac{1}{2t}\W^2_\X(\rho,\rho_t)\right]
\geq \frac 12\I(\rho).
\end{equation}
\end{prop}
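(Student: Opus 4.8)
The statement packages two inequalities of rather different nature, and I would treat them separately.

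\emph{The left inequality} is soft: I would read it off the duality formula for the local slope \cite[lemma 3.1.5]{AGS}. Since $\H$ is lower semicontinuous and bounded from below on the compact manifold $\X$, one has
$$
\frac12|\partial\H|^2(\rho)=\limsup_{t\downarrow0}\frac1t\sup_{\mu\in\P(\X)}\left\{\H(\rho)-\H(\mu)-\frac1{2t}\W_\X^2(\rho,\mu)\right\},
$$
and it then suffices to plug the single competitor $\mu=\rho_t$ into the inner supremum and use $\limsup\ge\liminf$.

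For \emph{the right inequality} I would first dispose of the trivial case $\H(\rho)=+\infty$: the heat flow regularizes, so $\H(\rho_t)<\infty$ for $t>0$, the bracket equals $+\infty$, and also $|\partial\H|(\rho)=+\infty$. Assuming henceforth $\H(\rho)<\infty$, the backbone is the classical entropy–dissipation identity along the heat flow. For $s>0$ the Neumann heat flow $\rho_s$ is smooth and strictly positive (parabolic regularity and the strong maximum principle), hence
$$
\frac{d}{ds}\H(\rho_s)=\int_\X(1+\log\rho_s)\,\Delta\rho_s=-\int_\X\frac{|\nabla\rho_s|^2}{\rho_s}=-\I(\rho_s),\qquad s>0,
$$
the boundary integral $\int_{\partial\X}(1+\log\rho_s)\,\partial_\nu\rho_s$ vanishing by the no‑flux condition. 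Integrating on $[\eps,t]$ and letting $\eps\downarrow0$ — using continuity of $s\mapsto\H(\rho_s)$ at $0$, itself a consequence of the narrow lower semicontinuity of $\H$ together with $\H(\rho_s)\le\H(\rho)$ (Jensen's inequality for the heat semigroup) — gives
$$
\H(\rho)-\H(\rho_t)=\int_0^t\I(\rho_s)\,\rd s,
$$
which is finite since $\H(\rho)<\infty$ and $\H(\rho_t)$ is bounded below. Next I would bound $\W_\X^2(\rho,\rho_t)$ from above by a Benamou–Brenier argument: writing $\partial_s\rho_s=\Delta\rho_s=\dive(\rho_s\nabla\log\rho_s)$, the heat flow solves the continuity equation on $(0,t]$ with velocity field $v_s:=-\nabla\log\rho_s$ satisfying $\|v_s\|_{L^2(\rho_s)}^2=\I(\rho_s)$, and since $\int_0^t\sqrt{\I(\rho_s)}\,\rd s\le(t\int_0^t\I(\rho_s)\,\rd s)^{1/2}<\infty$ by Cauchy–Schwarz, the curve is $\W_\X$‑absolutely continuous up to $s=0$ with $\W_\X(\rho,\rho_t)\le\int_0^t\sqrt{\I(\rho_s)}\,\rd s$.

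Combining the two displays, $\H(\rho)-\H(\rho_t)-\frac1{2t}\W_\X^2(\rho,\rho_t)\ge\int_0^t\I(\rho_s)\,\rd s-\frac12\int_0^t\I(\rho_s)\,\rd s=\frac12\int_0^t\I(\rho_s)\,\rd s$, so after the change of variables $s=tu$ one gets $\frac1t[\,\cdot\,]\ge\frac12\int_0^1\I(\rho_{tu})\,\rd u$. Since $\rho_{tu}\to\rho$ narrowly as $t\downarrow0$ for each fixed $u\in(0,1)$, Fatou's lemma and the narrow lower semicontinuity of $\I$ (Lemma~\ref{lem:props_Fisher}) give
$$
\liminf_{t\downarrow0}\frac1t\Big[\H(\rho)-\H(\rho_t)-\frac1{2t}\W_\X^2(\rho,\rho_t)\Big]\ \ge\ \frac12\int_0^1\liminf_{t\downarrow0}\I(\rho_{tu})\,\rd u\ \ge\ \frac12\I(\rho),
$$
which is the right inequality. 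The step I expect to require the most care is the behaviour as $s\downarrow0$: one must make sure the heat flow is Wasserstein‑absolutely continuous on the \emph{closed} interval $[0,t]$ with the stated velocity bound (a routine $\eps\downarrow0$ truncation exploiting narrow continuity of the flow) and that $\H(\rho_s)\to\H(\rho)$; everything else is the standard smooth computation on a compact manifold, the only manifold‑with‑boundary feature being the Neumann condition, which kills the boundary integral in the dissipation identity.
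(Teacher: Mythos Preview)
Your proof is correct and follows essentially the same route as the paper's: both use the duality formula for the local slope for the left inequality, and for the right inequality both compute the entropy dissipation $\frac{d}{ds}\H(\rho_s)=-\I(\rho_s)$, integrate from $\eps$ to $t$ and let $\eps\downarrow0$ using $\H(\rho_s)\to\H(\rho)$ (via Jensen plus lower semicontinuity), bound $\W_\X^2(\rho,\rho_t)\le t\int_0^t\I(\rho_s)\,\rd s$ by Benamou--Brenier, and conclude via the change of variables $s=tu$, Fatou, and lower semicontinuity of $\I$. The only cosmetic differences are that you treat the case $\H(\rho)=+\infty$ explicitly and reach the Wasserstein bound through $\W_\X\le\int_0^t\sqrt{\I(\rho_s)}\,\rd s$ followed by Cauchy--Schwarz rather than writing the squared Benamou--Brenier estimate directly.
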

Equality is actually expected to hold:
At least formally this is clear because the heat flow is nothing but the Wasserstein gradient flow of the entropy \cite{JKO98}
\begin{equation}
\label{eq:heat_flow_grad_flow}
\frac{d\rho_t}{dt}=-\grad_{\W_\X}\H(\rho_t)
\end{equation}
and therefore
\begin{multline*}
 \frac{1}{t}\left[\H(\rho)-\H(\rho_t)-\frac{1}{2t}\W^2_\X(\rho,\rho_t)\right]
 \\
 =
 \frac{\H(\rho)-\H(\rho_t)}{t}-\frac{1}{2}\left(\frac{ \W_\X(\rho,\rho_t)}{t}\right)^2
\underset{t\to 0}{ \sim}
 -\left.\frac{d}{dt}\H(\rho_t)\right|_{t=0}-\frac 12\left\|\frac{d\rho_t}{dt}\right\|^2_{t=0}
 \\
 =-\left\langle \grad_{\W_\X}\H(\rho),\left.\frac{d\rho_t}{dt}\right|_{t=0}\right\rangle
 -\frac 12\left\|\frac{d\rho_t}{dt}\right\|^2_{t=0}
 \overset{\eqref{eq:heat_flow_grad_flow}}{=}
 \left\|\grad_{\W_\X}\H(\rho)\right\|^2-\frac 12\left\|\grad_{\W_\X}\H(\rho)\right\|^2
 \\
 =
 \frac 12 \left\|\grad_{\W_\X}\H(\rho)\right\|^2
 =\frac 12|\partial\H|^2(\rho).
\end{multline*}
We wish to stress that nothing is really new or surprising here, but since we could not find anywhere in the literature a precise statement fitting our purpose we opted for giving a self-contained proof.
This generic result is well documented in the case of $\lambda$-convex functionals, in particular equality in \eqref{eq:slope_controls_Fisher} is proved in \cite[theorem 10.4.17]{AGS} over the whole space in (possibly infinite-dimensional) Hilbert settings.
For our current choice of the entropy $\H$ this convexity is well-known to be related to Ricci curvature lower bounds on the underlying manifold.
Note that, owing to our assumption on $\X$, we always have such a uniform bound $\operatorname{Ric}_x\geq \lambda$ for some $\lambda\in\R$.
The point is that the statement is independent of the precise value of this lower bound and requires no curvature assumption on the boundary, if any.
This should be expected since all quantities in \eqref{eq:slope_controls_Fisher} are first order in nature, while curvature is a second order notion.
However, this is more subtle than meets the eye.
For, when $\X=\Omega\subset \R^d$ is a smooth, banana-shaped domain, the geodesic displacement convexity \cite{mccann1997convexity} of the entropy may completely fail ($\H$ may not be $\lambda$-convex for any $\lambda\in\R$, even very negative).
In other words, in the presence of a negatively curved boundary, the connection between Ricci lower bounds and a modulus of displacement convexity is compromised.
As a consequence the characterization of the metric slope in \cite{AGS} does not apply here due to the possible lack of convexity.
In fact this should not be a fundamental obstruction: displacement convexity is a global notion, while as already discussed our statement is local in nature (and in fact our proof below will solely rely on local PDE arguments).
Another possible interpretation is as follows:
In a purely metric context, $\lambda$-convexity can be seen as a quantitative regularity for the driving functional, and may therefore prove to be key for the abstract theory.
However we are dealing here with the explicit heat flow: the latter of course enjoys many useful properties such as regularizing effects, which in turn can be exploited in this particular context to compensate for the lack of ''metric regularity`` (the $\lambda$-convexity).
One may also hope to apply \cite[thm. 7.6]{ambrosio2014calculus}, but this requires checking a priori the lower semi-continuity of the (relaxed) metric slope, which is hard to achieve in the absence of any explicit representation (such as $|\partial\H|^2=\I$, which is precisely at stake here.)
\begin{proof}
The first inequality in \eqref{eq:slope_controls_Fisher} is trivial owing to the \emph{duality formula for the local slope} \cite[lemma 3.1.5]{AGS}, which holds in arbitrary metric spaces and reads here
$$
\frac{1}{2}|\partial\H|^2(\rho)=\limsup\limits_{\tau \downarrow 0}\frac 1\tau \sup\limits_{\nu\in \P(\X)}\left\{\H(\rho)-\H(\nu)-\frac 1{2\tau}\W_\X^2(\rho,\nu)\right\}.
$$
Thus we only focus on the second inequality.

Note first that, by usual properties of the heat flow, $\rho_t$ is bounded away from zero and $C^\infty$ smooth (up to the boundary if $\partial\X\neq\emptyset$) for any $t>0$.
The following computations and integration by parts are therefore completely legitimate as long as we step away from $t=0$.
Since $\Dom(|\partial\H|)\subset \Dom\H$, we can assume that $\H(\rho)<\infty$ and $\rho=\rho(y)\rd y$ has density w.r.t. the volume measure $\rd y$.
Let $K_t(x,y)$ be the relevant heat kernel.
Owing to the integral representation $\rho_t(x)=\int_\X K_t(x,y)\rho(y)\rd y$ and the convexity of $z\mapsto H(z)=z\log z$ we have by Jensen's inequality
\begin{multline*}
\H(\rho_t)
=
\int_\X H(\rho_t(x))\rd x
\leq
\int_\X \left(\int_\X H(\rho(y))K_t(x,y)\rd y\right)\rd x
\\
=\int_\X H(\rho(y))\left(\int_\X K_t(x,y)\rd x\right)\rd y
=\int_\X H(\rho(y))\rd y=\H(\rho).
\end{multline*}
Moreover by continuity $\rho_t\to\rho$ and lower semi-continuity of $\H$ we also have $\H(\rho)\leq \liminf\limits_{t\to 0} \H(\rho_t)$, hence in particular
 $$
 \H(\rho_t)\to \H(\rho)<+\infty
 \hspace{1cm}\mbox{as }t\downarrow 0.
 $$
Using the identity $\Delta\rho =\dive(\rho\nabla\log\rho)$ (valid at least for smooth positive $\rho$), a very classical computation gives, for $t>0$,
\begin{multline*}
\frac d{dt}\H(\rho_t)
=
\frac d{dt}\int_\X \rho_t\log\rho_t
=
\int_\X(\log\rho_t +1)\partial_t\rho_t
=
\int_\X(\log\rho_t +1)\Delta\rho_t
\\
=
\int_\X(\log\rho_t +1)\dive(\rho_t\nabla\log\rho_t)
=-\int_\X |\nabla\log\rho_t|^2\rho_t = -\I(\rho_t).
\end{multline*}
Integrating from $t_0>0$ to $t>t_0$ and taking $t_0\to 0$ with $\H(\rho_{t_0})\to \H(\rho)$, we see that $t\mapsto \I(\rho_t)$  is integrable up to $t_0=0$ with
\begin{equation}
\label{eq:estimate_Hrho_Hrho_t}
\frac{\H(\rho)-\H(\rho_t)}{t}
=
\lim\limits_{t_0\downarrow 0}\frac{\H(\rho_{t_0})-\H(\rho_t)}{t}
=
\lim\limits_{t_0\downarrow 0} \frac{1}{t}\int_{t_0}^t\I(\rho_s) \rd s
=
\frac{1}{t}\int_{0}^t\I(\rho_s) \rd s.
\end{equation}
In order to estimate the squared Wasserstein distance, we observe that $\partial_s\rho_s=\Delta\rho_s=\dive(\rho_s\nabla\log\rho_s)$, hence $(\rho_s,v_s)=(\rho_s,-\nabla\log\rho_s)$ solves the continuity equation
$$
\partial_s\rho_s+\dive(\rho_s v_s)=0
\qqtext{with}
\int_0^t \int_\X|v_s|^2\rho_s\rd s = \int_0^t\I(\rho_s)\rd s<\infty.
$$
By the Benamou-Brenier formula \cite{BB,lisini2007characterization} we get
\begin{equation}
\label{eq:estimate_W_rho_rhot}
\frac{\W_\X^2(\rho,\rho_t)}{t^2}
\leq
\frac 1t\int_0^t |v_s|^2\rho_s
=
\frac 1t\int_0^t\I(\rho_s)\rd s
\end{equation}
Gathering \eqref{eq:estimate_Hrho_Hrho_t}\eqref{eq:estimate_W_rho_rhot} we get altogether
\begin{equation*}
\forall \,t>0:\hspace{1cm}
\frac{1}{t}\left[\H(\rho)-\H(\rho_t)-\frac{1}{2t}\W^2_\X(\rho,\rho_t)\right]
\geq
\frac 1{2t}\int_0^t\I(\rho_s)\rd s
=
\frac{1}{2}\int_0^1 \I(\rho_{t u})\rd u.
\end{equation*}
Fatou's lemma finally leads to
\begin{multline*}
\liminf\limits_{t\downarrow 0}
\frac{1}{t}\left[\H(\rho)-\H(\rho_t)-\frac{1}{2t}\W^2_\X(\rho,\rho_t)\right]
\geq
\liminf\limits_{t\downarrow 0}
\frac{1}{2}\int_0^1 \I(\rho_{t u})\rd u
\\
\geq
\frac{1}{2}\int_0^1 \liminf\limits_{t\downarrow 0}\I(\rho_{t u})\rd u
\geq
\frac{1}{2}\int_0^1 \I(\rho)\rd u
=\frac{1}{2}\I(\rho),
\end{multline*}
where the last inequality follows by lower semi-continuity of $\I$ combined with continuity of the heat flow $\rho_{t u}\to \rho_0=\rho$ as $t\downarrow 0$ for a.e. $u\in (0,1)$.
Note that this also covers the case $\I(\rho)=+\infty$.
\end{proof}

\begin{prop}
\label{prop:m_boundary_tangential}
Let $\bO\subset \R^d$ be a smooth domain.
Assume that $(\rho,m)$ solves the continuity equation $\partial_t\rho+\dive m=0$ in $(0,T)\times\bO$ with no-flux boundary condition on $\pO$, in the sense that
\begin{equation}
\label{eq:CE}
\int _\bO \phi \rho_{t_1}-\int _\bO \phi \rho_{t_0} = \int_{t_0}^{t_1}\int_\bO \nabla\phi\cdot m,
\qquad
\forall \,0\leq t_0<t_1\leq T\mbox{ and }\phi\in C^1(\bO)
\end{equation}
with $t\mapsto\rho_t$ being narrowly continuous.
Then $m_\G\coloneqq m\mrest \G$ is tangential.
\end{prop}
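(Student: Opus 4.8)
The plan is to test the weak continuity equation~\eqref{eq:CE} against a family of ``boundary-layer'' functions concentrating near $\G$ with a frozen normal derivative, and to show that only the normal part of $m_\G$ survives in the limit while the interior contribution — which by construction carries no mass on $\G$ — cannot compensate it.

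First decompose $m=m_\O+m_\G$ with $m_\O:=m\mrest\big((0,T)\times\O\big)$ and $m_\G:=m\mrest\big((0,T)\times\G\big)$; since $m$ is a finite $\R^d$-valued measure, $|m_\O|$ is finite and concentrated on $(0,T)\times\O$, so in particular $|m_\O|\big((0,T)\times\G\big)=0$. Let $s(x):=\dist(x,\pO)$, which is smooth on an interior neighbourhood $\{s<s_0\}$ of $\G$ (here we use smoothness of $\pO$), with $|\nabla s|=1$ and $\nabla s=-\nu$ along $\G$, $\nu$ the outer unit normal. Fix a model profile $\zeta\in C^1([0,\infty))$ with $\zeta(0)=0$, $\zeta'(0)=1$, $\zeta\equiv 0$ on $[1,\infty)$ and $|\zeta|\le 1$, and for $0<\eps<\min(s_0,1)$ set $\zeta_\eps(r):=\eps\,\zeta(r/\eps)$ and, for an arbitrary fixed $\chi\in C^1(\bO)$,
$$
\phi_\eps(x):=\chi(x)\,\zeta_\eps\big(s(x)\big).
$$
Then $\phi_\eps\in C^1(\bO)$, $\phi_\eps|_\G=0$, $\supp\phi_\eps\subset\{s\le\eps\}$, $\|\phi_\eps\|_\infty\le\eps\|\chi\|_\infty$, $\|\nabla\phi_\eps\|_\infty\le C(\chi)$ uniformly in $\eps$, and, crucially, $\nabla\phi_\eps|_\G=-\chi\,\nu$ \emph{independently of} $\eps$, since at $s=0$ only the term $\chi\,\zeta_\eps'(0)\,\nabla s$ survives.

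Plug $\phi=\phi_\eps$ into~\eqref{eq:CE} and let $\eps\to 0$. The left-hand side equals $\int_\O\phi_\eps\,(\o_{t_1}-\o_{t_0})$ — the singular part of $\rho$ on $\G$ does not see $\phi_\eps$ — and tends to $0$ by the bound $\|\phi_\eps\|_\infty\le\eps\|\chi\|_\infty$. The interior term $\int_{t_0}^{t_1}\int_\O\nabla\phi_\eps\cdot m_\O$ is bounded by $C(\chi)\,|m_\O|\big((t_0,t_1)\times(\{s\le\eps\}\cap\O)\big)$, which tends to $0$ by continuity of the finite measure $|m_\O|$ from above, since $\bigcap_{\eps>0}\big(\{s\le\eps\}\cap\O\big)=\emptyset$. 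Finally the boundary term is, for \emph{every} $\eps$, exactly $\int_{t_0}^{t_1}\int_\G\nabla\phi_\eps\cdot m_\G=-\int_{t_0}^{t_1}\int_\G\chi\,(\nu\cdot m_\G)$. Hence $\int_{t_0}^{t_1}\int_\G\chi\,(\nu\cdot m_\G)=0$ for all $\chi\in C^1(\bO)$ and all $0\le t_0<t_1\le T$. For fixed $t_0<t_1$, density of $\{\chi|_\G:\chi\in C^1(\bO)\}$ in $C(\G)$ (and finiteness of the relevant measure) shows that the finite signed Borel measure $E\mapsto(\nu\cdot m_\G)\big((t_0,t_1)\times E\big)$ on $\G$ annihilates $C(\G)$, hence vanishes; since the rectangles $(t_0,t_1)\times E$ form a $\pi$-system generating the Borel $\sigma$-algebra of $(0,T)\times\G$, the Hahn decomposition and uniqueness of measures give $\nu\cdot m_\G=0$, i.e. $m_\G$ is tangential.

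The only genuinely delicate points are the construction of the profiles $\phi_\eps$ — one must secure $C^1$-regularity up to $\pO$ together with a uniform gradient bound while keeping the normal derivative pinned at $-\chi\nu$ — and the final measure-theoretic step upgrading ``vanishing over time intervals'' to ``$\nu\cdot m_\G\equiv 0$''; neither is deep, and the structural heart of the argument is simply that $m_\O$ places no mass on $\G$, so it cannot hide a normal flux through the boundary.
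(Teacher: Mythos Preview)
Your proof is correct and follows essentially the same boundary-layer test-function strategy as the paper: construct $\phi_\eps\in C^1(\bO)$ supported in a thin collar $\{s\le\eps\}$, vanishing on $\G$, with prescribed normal derivative there, then pass to the limit. Your construction via the signed distance function and a fixed profile $\zeta$ is slightly more explicit than the paper's, and your direct use of $\|\phi_\eps\|_\infty\to 0$ together with measure continuity from above streamlines the paper's inner-regularity/$\delta$--$\eta$ exhaustion argument, but the core idea is identical.
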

This simply means that the no-flux boundary condition $m\cdot \nu=0$, although encoded in a weak meaning in \eqref{eq:CE}, can still be recovered in a strong sense for the boundary momentum $m_\G\cdot\nu=0$.
\begin{proof}
 For notational convenience we write $m_\O\coloneqq m\mrest \O,m_\G\coloneqq m\mrest\G$ with $m=m_\O+m_\G$, and also decompose $m_\G$ into tangential and normal components $m_\G=\mathfrak t_\G + \mathfrak n_\G $.
 We need to show that $\mathfrak n_\G=0$.

 Fix $0\leq t_0<t_1\leq T$ and take a small $\delta>0$.
 Being $\O$ a Polish space, the measure $\omega_{t_0}\in \M^+(\Omega)$ is inner regular and therefore here exists a compact set $K_\delta\subset\subset \O$ with $\o_{t_0}(\O\setminus K_\delta)\leq \delta$.
 Similarly, we can assume that $\o_{t_1}(\O\setminus K_\delta)\leq \delta$ at time $t=t_1$, and $K_\delta$ is at positive distance from $\pO$.
 Let now $Q=(t_0,t_1)\times\Omega$.
 As before, the measure $|m_\O|$ is inner regular and there exists a compact set $\tilde K_\delta\subset\subset Q$ such that $|m_{\O}|(Q\setminus \tilde K_\delta)\leq \delta$.
 By compactness $\tilde K_\delta$ is at positive distance from $\partial Q=\big([t_0,t_1]\times \pO \big)\cup \big(\{t_0\}\times \bO\big) \cup \big(\{t_1\}\times\bO\big)$.
 In particular there exists $\eta>0$ such that the smooth interior $\eta$-neighborhood $\G_\eta=\{x\in \Omega:\, \dist(x,\pO)<\eta\}$ of $\G$ satisfies $\G_\eta\subset \O\setminus K_\delta$ and $ (t_0,t_1)\times \G_\eta \subset Q\setminus \tilde K_\delta$ -- see Figure~\ref{fig:exhaustion}.
 Whence
 $$
 \o_{t_0}(\G_\eta)\leq \delta,
 \quad
 \o_{t_1}(\G_\eta)\leq \delta,
 \quad
 |m_\O|((t_0,t_1)\times\G_\eta)\leq \delta.
 $$
\begin{figure}
\centering
\def\svgwidth{8cm}
\begingroup%
  \makeatletter%
  \providecommand\color[2][]{%
    \errmessage{(Inkscape) Color is used for the text in Inkscape, but the package 'color.sty' is not loaded}%
    \renewcommand\color[2][]{}%
  }%
  \providecommand\transparent[1]{%
    \errmessage{(Inkscape) Transparency is used (non-zero) for the text in Inkscape, but the package 'transparent.sty' is not loaded}%
    \renewcommand\transparent[1]{}%
  }%
  \providecommand\rotatebox[2]{#2}%
  \newcommand*\fsize{\dimexpr\f@size pt\relax}%
  \newcommand*\lineheight[1]{\fontsize{\fsize}{#1\fsize}\selectfont}%
  \ifx\svgwidth\undefined%
    \setlength{\unitlength}{542.84316656bp}%
    \ifx\svgscale\undefined%
      \relax%
    \else%
      \setlength{\unitlength}{\unitlength * \real{\svgscale}}%
    \fi%
  \else%
    \setlength{\unitlength}{\svgwidth}%
  \fi%
  \global\let\svgwidth\undefined%
  \global\let\svgscale\undefined%
  \makeatother%
  \begin{picture}(1,1.10777978)%
    \lineheight{1}%
    \setlength\tabcolsep{0pt}%
    \put(0,0){\includegraphics[width=\unitlength,page=1]{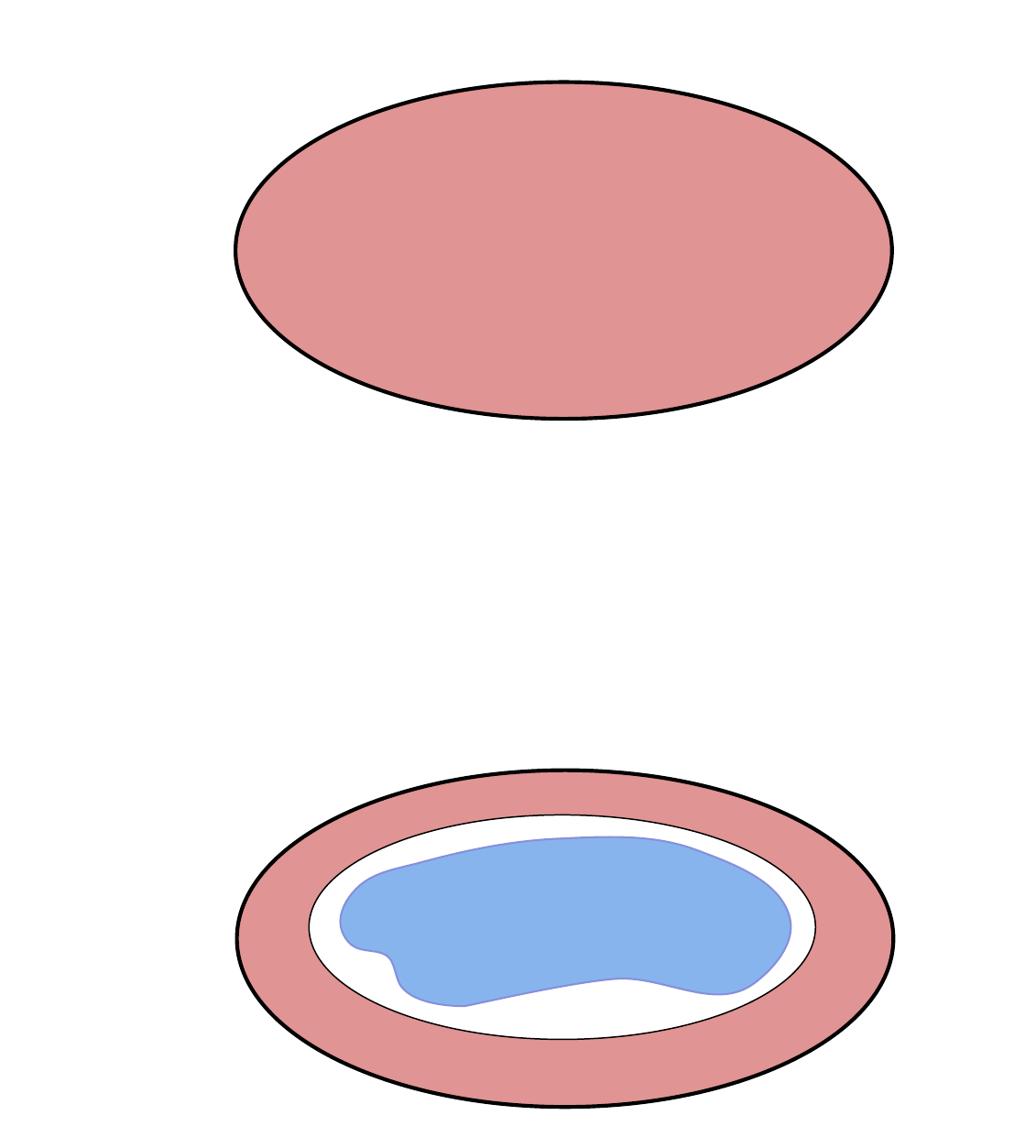}}%
    \put(0.02800738,0.87147445){\color[rgb]{0,0,0}\makebox(0,0)[lt]{\lineheight{1.25}\smash{\begin{tabular}[t]{l}$t_1$\end{tabular}}}}%
    \put(0,0){\includegraphics[width=\unitlength,page=2]{dessin.pdf}}%
    \put(0.02800738,0.1780061){\color[rgb]{0,0,0}\makebox(0,0)[lt]{\lineheight{1.25}\smash{\begin{tabular}[t]{l}$t_0$\end{tabular}}}}%
    \put(0.270461132,0.1412846){\color[rgb]{0,0,0}\makebox(0,0)[lt]{\lineheight{1.25}\smash{\begin{tabular}[t]{l}$\eta$\end{tabular}}}}%
    \put(0.63235854,0.22063702){\color{blue}\makebox(0,0)[lt]{\lineheight{1.25}\smash{\begin{tabular}[t]{l}$K_\delta$\end{tabular}}}}%
    \put(0.50415035,0.48587507){\color[RGB]{44,126,48}\makebox(0,0)[lt]{\lineheight{1.25}\smash{\begin{tabular}[t]{l}$ \tilde{K}_\delta $\end{tabular}}}}%
    \put(0.66448085,0.080171069){\color{red}\makebox(0,0)[lt]{\lineheight{1.25}\smash{\begin{tabular}[t]{l}$\Gamma_\eta$\end{tabular}}}}%
    \put(0,0){\includegraphics[width=\unitlength,page=3]{dessin.pdf}}%
  \end{picture}%
\endgroup%

\caption{cylindrical exhaustion $(t_0,t_1)\times \G_\eta$ for $\o_{t_0},\o_{t_1},|m_\O|$}
\label{fig:exhaustion}
\end{figure}
 Pick an arbitrary test-function $\psi\in C(\G)$.
 Since $\G_\eta$ is smooth, for any small $\eta$ we can find a function $\phi^\eta(x)\in C^1(\bO)$ such that
 \begin{enumerate}[(i)]
  \item
  $\phi^\eta|_{\pO}=0$
  \item
  $\partial_\nu\phi^\eta|_{\pO}=\psi$
  \item
  $\phi^\eta$ is supported in $\G_\eta$
  \item
  we have the bounds $\|\phi^\eta\|_\infty + \|\nabla\phi ^\eta\|\leq C=C_\psi$ uniformly in $\eta\to 0$.
 \end{enumerate}
 Roughly speaking, $\phi^\eta$ is a sort of (possibly signed) bump function on the annular neighborhood  $\G_\eta$ of $\G$, vanishing on both the inner and outer boundaries, and having prescribed normal growth $\partial_\nu\phi^\eta=\psi$ across the outer boundary $\G=\pO$.
 We want to test now $\phi=\phi^\eta$ in the weak formulation of the continuity equation.
Because $\phi^\eta$ vanishes on $\pO$ and outside of $\G_\eta\subset\O$ we see that the left-hand side of \eqref{eq:CE}
 $$
   \left|\int _\bO \phi^\eta \rho_{t_1}-\int _\bO \phi^\eta \rho_{t_0}\right|
  =\left|\int _{\G_\eta} \phi^\eta \o_{t_1}-\int _{\G_\eta} \phi^\eta \o_{t_0}\right|
  \leq \|\phi^\eta\|_\infty \left(\o_{t_0}(\G_\eta) + \o_{t_1}(\G_\eta)\right)
  \leq 2 C_\psi\delta\to 0
 $$
as $\delta\to 0$.
In the right-hand side, observe that by construction $\nabla\phi^\eta$ is normal on $\G$ with slope $\partial_\nu\phi^\eta=\psi$.
Hence the tangential contribution is identically zero, and writing $\mathfrak n_\G=\nu(x) \cdot n_\G$ for some scalar measure $n_\G\in \M((0,T)\times\G)$ we get
\begin{multline*}
\int_{t_0}^{t_1}\int_\bO \nabla\phi^\eta\cdot m
=\int_{t_0}^{t_1}\int_\O \nabla\phi^\eta\cdot m_{\Omega}
+\int_{t_0}^{t_1}\int_\G \nabla\phi^\eta\cdot \mathfrak t_\G
+\int_{t_0}^{t_1}\int_\G \nabla\phi^\eta\cdot \mathfrak n_\G
\\
=\int_{t_0}^{t_1}\int_\O \nabla\phi^\eta\cdot m_{\Omega} + 0 +  \int_{t_0}^{t_1}\int_\G \partial_\nu\phi^\eta \,n_\G
=\int_{t_0}^{t_1}\int_\O \nabla\phi^\eta\cdot m_{\Omega} + \int_{t_0}^{t_1}\int_\G \psi\, n_\G.
\end{multline*}
As before the first term
$$
\left|\int_{t_1}^{t_1}\int_\O \nabla\phi^\eta\cdot m_{\Omega}\right|
=
\left|\int_{t_1}^{t_1}\int_{\G_\eta} \nabla\phi^\eta\cdot m_{\Omega}\right|
\leq
\|\nabla\phi^\eta\|_\infty |m_\Omega|((t_0,t_1)\times\G_\eta)\leq C_\psi \delta \to 0.
$$
Taking thus $\delta\to 0$ in $\int _\bO \phi^\eta \rho_{t_1}-\int _\bO \phi^\eta \rho_{t_0} = \int_{t_0}^{t_1}\int_\bO \nabla\phi^\eta\cdot m_t \rd t$ we obtain
$$
0=\int_{t_0}^{t_1}\int_\G \psi \,n_\G
$$
for all $\psi\in C(\G)$ and $t_0<t_1$.
Whence $\mathfrak n_\G=\nu(x)\cdot n_\G\equiv 0$ and the proof is complete.
\end{proof}
\begin{lem}
\label{lem:W1p_boundary_lebesgue_diff}
In $\O=\R^d_+$ let us write $x=(x_1,x')\in\R^+\times \R^{d-1}$.
 Fix $p>1$ and take $\eps,\delta\to 0$ with $\eps^{p-1}=\mathcal O(\delta^{d-1})$.
 For $x_0=(0,x_0')\in \pO$ let $\O_{\eps,\delta}=\{x=(x_1,x')\in (0,\eps) \times B_\delta^{d-1}(x_0')\}\subset \O$.
 Then for any $u\in W^{1,p}_{loc}(\bO)$ and a.e. $x_0'\in \R^{d-1}$ there holds
 $$
\fint\limits_{\Oed} |u(x) - u(x_0)|^p
=
\frac{1}{|\Oed|}\int\limits_{\Oed} |u(x) - u(x_0)|^p
\to 0
\qtext{as}\eps,\delta\to 0.
 $$
\end{lem}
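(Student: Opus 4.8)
The plan is to reduce the statement to two standard facts: the Lebesgue differentiation theorem for the boundary trace $v\coloneqq\tr u\in L^p_{loc}(\pO)\cong L^p_{loc}(\R^{d-1})$, and the absolute continuity of the integral $A\mapsto\int_A|\nabla u|^p$. Here $u(x_0)$ is of course understood as the trace value $v(x_0')=(\tr u)(x_0')$. First I would recall the one-dimensional (ACL) characterization of Sobolev functions on normal lines \cite[chapter 18]{leoni2017first}: for a.e. $x'\in\R^{d-1}$ the map $x_1\mapsto u(x_1,x')$ admits an absolutely continuous representative on bounded subintervals of $[0,\infty)$, with $v(x')=\lim_{x_1\downarrow 0}u(x_1,x')$ and
$$u(x_1,x')-v(x')=\int_0^{x_1}\partial_1 u(s,x')\,\rd s,\qquad x_1>0.$$
I then fix $x_0'$ to be a point for which this representation holds and which is simultaneously an $L^p$-Lebesgue point of $v$, i.e. $\fint_{B^{d-1}_\delta(x_0')}|v-v(x_0')|^p\to 0$ as $\delta\to 0$; both conditions hold for a.e. $x_0'\in\R^{d-1}$.

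Next I would split by convexity
$$|u(x_1,x')-v(x_0')|^p\le 2^{p-1}|u(x_1,x')-v(x')|^p+2^{p-1}|v(x')-v(x_0')|^p$$
and average over $\Oed=(0,\eps)\times B^{d-1}_\delta(x_0')$. The second term integrates out trivially in the normal variable, leaving $2^{p-1}\fint_{B^{d-1}_\delta(x_0')}|v(x')-v(x_0')|^p\,\rd x'$, which tends to $0$ by the choice of $x_0'$. For the first (bulk) term I would use the representation formula together with Jensen's inequality $\bigl|\int_0^{x_1}f\bigr|^p\le x_1^{p-1}\int_0^{x_1}|f|^p$ to get, for every $x_1\in(0,\eps)$,
$$|u(x_1,x')-v(x')|^p\le \eps^{p-1}\int_0^\eps|\partial_1 u(s,x')|^p\,\rd s.$$
Integrating this over $\Oed$, integrating out $x_1$ (which contributes one extra factor $\eps$), and recalling $|\Oed|=|B^{d-1}_1|\,\eps\,\delta^{d-1}$, I obtain
$$\fint_{\Oed}|u(x_1,x')-v(x')|^p\,\rd x\le \frac{\eps^{p-1}}{|B^{d-1}_1|\,\delta^{d-1}}\int_{\Oed}|\nabla u|^p.$$

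It then remains to observe that the right-hand side vanishes: the prefactor $\eps^{p-1}/\delta^{d-1}$ stays bounded precisely because of the regime \eqref{eq:regime_eps_delta}, while $\int_{\Oed}|\nabla u|^p\to 0$ since the sets $\Oed$ shrink to the point $x_0$ and $|\nabla u|^p\in L^1_{loc}(\bO)$ (absolute continuity of the integral, or dominated convergence). Collecting the two estimates gives the claim.

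The only genuinely delicate point — the ``main obstacle'' — is not the computation above but the identification of the abstract trace $\tr u$ with the pointwise normal limit $\lim_{x_1\downarrow 0}u(x_1,x')$ together with the validity of the line-representation formula for a.e. $x'$; this is classical for $W^{1,p}$ with $p>1$ (indeed $p\ge 1$), but it is what makes the whole argument work. Conceptually, the subtlety is that the box $\Oed$ is \emph{anisotropic} — of thickness $\eps$ in the normal direction but of width $\delta$ tangentially — so one cannot simply invoke the isotropic Lebesgue differentiation theorem at $x_0$; the one-sided normal average is controlled only at the cost of the factor $\eps^{p-1}/\delta^{d-1}$ produced by Jensen's inequality, and the scaling \eqref{eq:regime_eps_delta} is exactly what keeps this factor bounded.
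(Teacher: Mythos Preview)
Your proof is correct and follows essentially the same route as the paper's: the triangle-inequality split into a bulk term $|u(x_1,x')-v(x')|^p$ and a tangential term $|v(x')-v(x_0')|^p$, the Jensen estimate on normal lines producing the factor $\eps^{p-1}/\delta^{d-1}$, and the conclusion via absolute continuity of $\int|\nabla u|^p$ together with Lebesgue differentiation for the trace. The only (minor) difference is that the paper first proves the key pointwise inequality for $u\in C^1(\bO)$ and then passes to general $u\in W^{1,p}$ by density and continuity of the trace operator, whereas you invoke the ACL characterization directly to get the line-representation formula for a.e.\ $x'$; both devices serve the same purpose and neither is materially simpler than the other.
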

Here by $u(x_0)$ we mean $[\tr u](x_0')$, the boundary trace of $u\in W^{1,p}$ evaluated at a point $x_0'$ such that $x_0=(0,x_0')$, which indeed makes sense for a.e. $x_0'\in \R^{d-1}$.
This is nothing but a Lebesgue differentiation for boundary points.
Compared to standard versions, such as \cite[thm. 5.7]{gariepy2015measure}, it is worth stressing that the shrinking sets $\Oed$ in our statement may be very thin in the $x_1$ direction and can have unbounded eccentricity (a typical requirement for standard Lebesgue differentiation).
We will typically use this with $\eps$ much smaller than $\delta$, i-e averaging on very thin boundary layers.
\begin{proof}
For simplicity let us denote $v(x')=\tr u(x')=u(0,x')$ and $u_0\coloneqq u(x_0)=v(x_0')\eqqcolon v_0$.
Assume first that $u\in C^1(\bO)$.
Then for $x=(x_1,x')\in \Oed$ we have by Jensen's inequality
$$
|u(x_1,x')-v(x')|^p
=
\left|\int_0^{x_1}\partial_{x_1} u(z,x')\rd z\right|^p
\leq
x_1^{p-1}\int_0^{x_1}|\partial_{x_1}u(z,x')|^p\rd z
\leq
x_1^{p-1}\int_0^{\eps}|\nabla u(z,x')|^p\rd z.
$$
Integrating in $x_1\in (0,\eps)$ and $x'\in B_\delta=B_\delta^{d-1}(x_0')$ we get
$$
\int_{\Oed}|u(x_1,x')-u(0,x')|^p\rd x
\leq
\frac{\eps^p}{p}\int_{\Oed}|\nabla u|^p \rd x.
$$
By triangular inequality with $(a+b)^p\leq 2^{p-1}(a^p+b^p)$ we obtain
\begin{multline*}
\frac 1{2^{p}}\int_\Oed |u(x)-u_0|^p
\leq
\int_\Oed |u(x_1,x')-v(x')|^p + \int_\Oed |v(x')-v_0|^p
\\
\leq
\frac{\eps^p}{p}\int_{\Oed}|\nabla u|^p \rd x + \eps \int_{B_\delta}|v(x')-v_0|^p.
\end{multline*}
Dividing by $|\Oed|=\eps |B^{d-1}_\delta|$ we get
$$
\frac 1{2^{p}}\fint_\Oed |u(x)-u_0|^p
\leq
\frac{\eps^p}{p |B_1^{d-1}|\eps \delta^{d-1}}\int_{\Oed}|\nabla u|^p + \fint_{B_\delta}|v(x')-v_0|^p
$$
and therefore, owing to our standing assumption that $\eps^{p-1}=\mathcal O(\delta^{d-1})$,
$$
\fint_\Oed |u(x)-u_0|^p\leq C\left(\int_{\Oed}|\nabla u|^p + \fint_{B_\delta}|v(x')-v_0|^p\right)
$$
for $C>0$ only depending on $p> 1$, the dimension $d$, and the upper bound on $\eps^{p-1}/\delta^{d-1}=\mathcal O(1)$, but not on $u$.
By density of $C^1(\bO)$ in $W^{1,p}$ and continuity of the trace operator $\tr:W^{1,p}(\O)\to L^p(\pO)$ the same holds for any $u\in W^{1,p}(\O)$.
Now since $|\nabla u|^p\in L^1$ the first term converges to zero as soon as $|\Oed|\to 0$, while the second term converges to zero simply by the Lebesgue differentiation theorem applied to $v=\tr u$ (here it is important that we chose $\Oed$ with bounded eccentricity in the $\R^{d-1}$ tangential direction).
\end{proof}
\end{appendices}
\paragraph{Acknowledgment}
J.-B.C. was supported by FCT - Funda\c{c}\~{a}o para a Ci\^encia e a Tecnologia, under the project UIDB/04561/2020.
L.M. was funded by FCT - Funda\c{c}\~{a}o para a Ci\^encia e a Tecnologia through a personal grant 2020/00162/CEECIND (DOI 10.54499/2020.00162.CEECIND/CP1595/CT0008) and project UIDB/00208/2020 (DOI 10.54499/UIDB/00208/2020).
F.S. acknowledges the support of the European Union via the ERC AdG 101054420 EYAWKAJKOS project.

\bibliographystyle{plain}
\bibliography{./biblio}

\bigskip
\noindent
{\sc
Jean-Baptiste Casteras (\href{mailto:jeanbaptiste.casteras@gmail.com}{\tt jeanbaptiste.casteras@gmail.com}).
\\
CMAFcIO, Faculdade de Ci\^encias da Universidade de Lisboa, Edificio C6, Piso 1, Campo Grande 1749-016 Lisboa, Portugal
}
\\

\noindent
{\sc
L\'eonard Monsaingeon (\href{mailto:lmonsaingeon@ciencias.ulisboa.pt}{\tt lmonsaingeon@ciencias.ulisboa.pt}).
\\
Institut \'Elie Cartan de Lorraine, Universit\'e de Lorraine, Site de Nancy B.P. 70239, F-54506 Vandoeuvre-l\`es-Nancy Cedex, France
\\
Grupo de F\'isica Matem\'atica, Departamento de Matemática, Instituto Superior T\'ecnico, Av. Rovisco Pais
1049-001 Lisboa, Portugal
}
\\

\noindent
{\sc
Filippo Santambrogio (\href{mailto:santambrogio@math.univ-lyon1.fr}{\tt santambrogio@math.univ-lyon1.fr}).
\\
Universite Claude Bernard Lyon 1, ICJ UMR5208, CNRS, Ecole Centrale de Lyon, INSA Lyon, Universit\'e Jean Monnet,
69622 Villeurbanne, France.}
\end{document}